\documentclass[reqno, english,12pt]{amsart}
\RequirePackage{mathrsfs} \let\mathcal\mathscr
\usepackage{a4wide}
\usepackage[all]{xy}
\usepackage{enumerate}
\usepackage{color}
\usepackage{amssymb,amsmath,amsfonts,amsthm}

\usepackage{hyperref}
\usepackage{dsfont}
\usepackage{upgreek}
\usepackage{mathrsfs}
\usepackage{mathabx,yfonts}
\usepackage{enumerate}
\usepackage{enumitem}
\usepackage{graphicx}

\DeclareMathOperator{\Pic}{Pic} 
\DeclareMathOperator{\NS}{NS} 
 
\DeclareMathOperator{\rank}{rank}

\let\div\relax
\DeclareMathOperator{\div}{div}

\DeclareMathOperator{\Spec}{Spec}

\newtheorem{theorem}{Theorem}
\newtheorem{conjecture}[theorem]{Conjecture}
\newtheorem{lemma}[theorem]{Lemma}

\theoremstyle{definition}
\newtheorem{definition}[theorem]{Definition}
\newtheorem{example}[theorem]{Example}
\newtheorem{remark}[theorem]{Remark}

\numberwithin{theorem}{section}
\numberwithin{equation}{section}
\numberwithin{table}{section}

\DeclareSymbolFont{bbold}{U}{bbold}{m}{n}
\DeclareSymbolFontAlphabet{\mathbbold}{bbold}

\renewcommand{\epsilon}{\varepsilon}

\renewcommand{\leq}{\leqslant}
\renewcommand{\geq}{\geqslant}
\renewcommand{\#}{\sharp}
\renewcommand{\gg}{\ggg}

\renewcommand{\ll}{\lll}

\newcommand\vz{\mathbf{0}}

\newcommand\FF{\mathbb{F}}
\newcommand\PP{\mathbb{P}}
\newcommand\ZZ{\mathbb{Z}}

\newcommand\QQ{\mathbb{Q}}
\newcommand\RR{\mathbb{R}}

\newcommand\GG{\mathbb{G}}

\newcommand\Gm{\GG_\mathrm{m}}

\newcommand{\OO}{\mathcal{O}}

\newcommand{\archplaces}{{\Omega_\infty}}

\newcommand{\vx}{\mathbf{x}}

\newcommand{\vB}{\mathbf{B}}
\newcommand{\vy}{\mathbf{y}}

\newcommand{\norm}{\mathfrak{N}}

\newcommand{\abs}[1]{\left|#1\right|}
\newcommand{\absv}[1]{\left|#1\right|_v}

\newcommand\card{\#}

\newcommand{\locdegv}{{m_v}}
\newcommand\dg{m}

% Color comments!

\begin{document}

\title
{
Rational points and non-anticanonical height functions
}

\author{Christopher Frei}
\address{
University of Manchester \\
School of Mathematics\\
Oxford Road\\
Manchester\\
M13 9PL\\
UK}
\email{christopher.frei@manchester.ac.uk}

\author{Daniel Loughran}
\address{
University of Manchester \\
School of Mathematics\\
Oxford Road\\
Manchester\\
M13 9PL\\
UK}
\email{daniel.loughran@manchester.ac.uk}

%\author{Efthymios Sofos}
%\address{
%Universiteit Leiden\\
%Mathematisch Instituut\\
%Niels Bohrweg 1\\
%Leiden\\
%2333 CA\\
%Netherlands
%}
%\email{e.sofos@math.leidenuniv.nl}

\begin{abstract}
A conjecture of Batyrev and Manin predicts the asymptotic behaviour of rational points of bounded
height on smooth projective varieties over number fields.
We prove some new cases of this conjecture for conic bundle
surfaces equipped with some non-anticanonical height functions. As a special case, we verify these conjectures for the first time
for some smooth cubic surfaces for height functions associated to certain ample line bundles.
\end{abstract}

\subjclass[2010]{11D45 %  	Counting solutions of Diophantine equations
 	(14G05, %  	Rational points
 	 11G35)%  	Varieties over global fields 
 	 }

\maketitle

\setcounter{tocdepth}{1}
\tableofcontents

\section{Introduction}\label{intro}
\subsection{The Batyrev--Manin conjecture}

This paper is concerned with counting rational points of bounded height on algebraic varieties.
Let $X$ be a smooth projective variety over a number field $k$ with $X(k) \neq \emptyset$ and let $D$ be a divisor on $X$.
Recall from the theory of heights that to each choice of adelic metric on the line bundle $\OO_X(D)$
one can associate a choice of height function $H$. (Note that this theory works equally well for $\QQ$-divisors, see e.g.~\cite[\S2]{CLT02}.)
If $D$ is \emph{big}, then such height functions
have the important property that the cardinality
$$N(U,H,B) = \#\{ x \in U(k) : H(x) \leq B\}$$
is finite for some open dense subset $U \subset X$ and all $B > 0$.
If $X$ is a Fano variety, or a variety which is close to being Fano,
then a conjecture of Batyrev and Manin \cite{BM90} predicts
an asymptotic formula of the shape
\begin{equation} \label{conj:BM}
	N(U,H,B) \sim c_{U,H} B^{a(D)} (\log B)^{b(D) -1},
\end{equation}
for some $c_{U,H} > 0$ and for certain exponents $a(D)$ and $b(D)$ defined in terms of the geometry of $D$ (we recall
the definitions of $a(D)$ and $b(D)$ in \S\ref{sec:conjectures}).

Much emphasis has been placed on the special case $D = -K_X$, i.e.~on anticanonical
height functions. Here $a(-K_X) = 1$ and $b(-K_X) = \rank \Pic X$.
In this case the asymptotic formula \eqref{conj:BM} has been verified in many special cases,
but is still open in general. For example for smooth cubic surfaces the best known upper bound
over $\QQ$ is $N(U,H,B) \ll_{\varepsilon} B^{4/3 + \varepsilon}$ due to Heath-Brown \cite{rogerbest},
under the assumption that $X$ contains $3$ coplanar lines and one takes $U$ to be the complement
of all the lines in $X$.

One of the observations in this paper is that one can sometimes
obtain better results for \emph{non-anticanonical heights}.
(Note that the harmonic analysis approach to counting rational points of bounded height \cite{CLT02,STBT07} usually works for all choices
of height function, rather than just anticanonical heights.)
If $X$ is Fano with $\Pic X = \ZZ$ then all heights are rational powers of anticanonical heights. 
So to obtain non-trivial non-anticanonical heights, one requires extra
geometric structure. In this paper we take
this to come from a \emph{conic bundle structure} (see \S\ref{sec:conics} for definitions).
%A case of particular interest is \emph{del Pezzo surfaces}, which are the Fano varieties
%of dimension $2$.

\subsection{Del Pezzo surfaces and conic bundle surfaces}
There are some ``easy" non-anticanonical heights which one can work with. 

\begin{example}
Let $X$ be a smooth cubic surface given as a blow-up $\pi:X \to \PP_k^2$  
in $6$ rational points in general position. Let $U \subset X$ be the complement
of the lines in $X$ and let $H$ be a height function associated to the divisor $\pi^*(-K_{\PP^2})$.
Then, due to the functoriality of heights, we have
$$N(U,H,B) = \# \{ x \in \pi(U) : H_{-K_{\PP^2}}(x) \leq B \},$$
which one can of course asymptotically estimate using Schanuel's theorem. 
However, the divisor $\pi^*(-K_{\PP^2})$ is not ample, which is reflected in the fact that
one is really just counting rational points on $\PP^2_k$ in this case.
\end{example}
Our first result concerns del Pezzo surfaces with a conic bundle
structure. Here we are able to deal with ample line bundles, so that the counting problem does not come from
a simpler variety.

\begin{theorem} \label{thm:dP}
	Let $X$ be a del Pezzo surface of degree $d$ over a number field $k$
        with a conic bundle structure $\pi:X \to \PP^1$. Let $U \subset X$ be the complement of the singular fibres of $\pi$ and assume that $U(k) \neq \emptyset$.
	Let $\alpha > 1$ if $d \geq 3$ and $\alpha>2$ if $d =2,1$. Let
	$H$ be a choice of height function associated to the $\QQ$-divisor $-K_X +
        \alpha F$,
where $F$ is the class of a fibre of $\pi$.
	Then 
	$$N(U,H,B) \sim c_{U,H} B, \quad \mbox{ as } B \to \infty,$$
	for some $c_{U,H} > 0$.
\end{theorem}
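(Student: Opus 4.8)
The plan is to reduce the counting problem on $X$ to a counting problem fibre by fibre over $\PP^1$, exploiting that the anticanonical-plus-$\alpha F$ height is dominated by the fibral direction once $\alpha$ is large enough. First I would make the height explicit: choosing compatible metrics, a height associated to $-K_X + \alpha F$ factors (up to bounded multiplicative constants) as $H(x) \asymp H_{-K_X}(x) \cdot H_F(x)^\alpha$, where $H_F$ is a height for the conic bundle morphism $\pi$, i.e. essentially the height of the point $\pi(x) \in \PP^1(k)$. On a smooth fibre $X_t = \pi^{-1}(t)$, which is a conic over $k$, the restriction of $-K_X$ is the anticanonical class $-K_{X_t}$ of degree $2$, so counting points on $X_t$ of bounded $H_{-K_X}$-height is governed by the number of points of bounded height on a conic — and this is what the anti-canonical counting on conics, together with uniformity in the conic, will supply.

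The main steps, in order, are: (1) parametrise $U(k) = \bigsqcup_{t \in \PP^1(k)} X_t(k)$ and write
\[
N(U,H,B) = \sum_{\substack{t \in \PP^1(k) \\ H_F(t)^\alpha \le B}} \#\{x \in X_t(k) : H_{-K_X|_{X_t}}(x) \le B/H_F(t)^\alpha\};
\]
(2) for each smooth fibre, invoke an explicit asymptotic (with controlled error and with the implied constants uniform in $t$) for the number of points of bounded anticanonical height on the conic $X_t$ — this should produce a main term of the form $c(t)\,(B/H_F(t)^\alpha)^{1}$ plus a power-saving error, using that conics that have a rational point are isomorphic to $\PP^1$ and their point counts reduce to Schanuel-type estimates twisted by the conic; (3) substitute back and sum over $t$, so that the main term becomes $B \sum_{t} c(t) H_F(t)^{-\alpha}$ and one needs this Dirichlet-type series over $\PP^1(k)$ to converge — this is exactly where the hypothesis $\alpha > 1$ (resp.\ $\alpha > 2$) enters, since $c(t)$ grows like a fixed power of $H_F(t)$ depending on how the local conic densities vary, and the degree-$d$ case is milder than the degree $\le 2$ case; (4) check that the contribution of the singular fibres (excluded in $U$) and of the ``large $t$'' tail is negligible, and that the Tamagawa-type constant $c_{U,H} = \sum_t c(t) H_F(t)^{-\alpha}$ is positive and finite, using $U(k)\neq\emptyset$ to guarantee at least one fibre contributes.

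The hard part will be step (2) together with the uniformity needed to make step (3) legitimate: one needs an asymptotic count of rational points of bounded height on the family of conics $\{X_t\}_{t \in \PP^1(k)}$ with an error term that saves a power of the height \emph{uniformly} in the conic, and with explicit control on how the leading constant $c(t)$ depends on $t$ (its size is what dictates the precise threshold for $\alpha$). This is a quantitative, uniform version of Schanuel's theorem over the varying quadratic data defining $X_t$; assembling it — presumably by passing to a degree-$\le 2$ cover or by an explicit conic parametrisation and sieving out the non-squarefree or non-split fibres — and then verifying that the resulting series in $t$ converges to a positive constant under the stated lower bounds on $\alpha$, is the technical core of the argument. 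The remaining obstacle is bookkeeping: controlling the interaction between the two height factors near the boundary $H_F(t)^\alpha \approx B$, where the fibral count is $O(1)$ and the naive main term is not yet dominant, which requires a dyadic decomposition in $H_F(t)$ and careful truncation.
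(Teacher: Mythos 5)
Your high-level strategy — fibre over $\PP^1$, count on each smooth conic fibre, and sum — is the same as the paper's. But your proposed execution of the key technical step is genuinely different, and also weaker than what the paper uses, in a way that matters.

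In step (2) you propose to prove a uniform asymptotic for the number of rational points on each conic $X_t$ of bounded anticanonical height, with a power-saving error term uniform in $t$. The paper avoids this entirely, and for good reason: such a uniform asymptotic is not what one should aim for, and is not what the argument requires. What the paper uses is the \emph{dominated convergence theorem}. Each smooth fibre $X_t$ with a rational point is isomorphic to $\PP^1$, so Peyre's theorem gives $N(X_t\cap U,H,B)=c_t B(1+o_t(1))$, where the error is allowed to depend on $t$ in a completely uncontrolled way. What is needed in addition is only a uniform \emph{upper bound} $N(X_t\cap U,H,B)\ll g(t)\,B$ with $\sum_t g(t)<\infty$; one then gets $\lim_{B\to\infty}\sum_t N(X_t\cap U,H,B)/B=\sum_t c_t$ by DCT, with no uniformity assumptions on the errors. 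The uniform upper bound is supplied by the Browning--Swarbrick Jones bound (Lemma~\ref{lem:BSJ}), which gives an absolute-constant upper bound on $N(\PP^2\cap\{Q=0\},\vB)$ in terms of the discriminant, the minors, and the box dimensions. This input is an upper bound, not an asymptotic; trying to upgrade it to a uniform asymptotic (which your plan asks for) is a significantly harder problem that the paper deliberately sidesteps.

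There are two further gaps. First, your heuristic that ``$c(t)$ grows like a fixed power of $H_F(t)$'' is false: $c(t)=0$ for conics without a rational point, and, as the paper demonstrates when disproving Conjecture~3.5.1 of Batyrev--Tschinkel, the Tamagawa measures can oscillate by arbitrarily large multiplicative factors (growing roughly like $(4/3)^{\omega}$ in the example). The actual source of the thresholds $\alpha>1$ (for $d\geq3$) and $\alpha>2$ (for $d\leq2$) is geometric: one embeds $X$ in $\FF_1(a_0,a_1,a_2)$ as a bidegree-$(e,2)$ hypersurface and uses the explicit invariants from \cite[Thm.~5.6]{FLS16}, for which $a_0+a_1+e=1$ or $2$ respectively; the stronger Theorem~\ref{thm:conic_bundle_eqn} for surfaces (which exploits cancellation between $H(y)^{(a_0+a_1+a_2)/3+A}$ and $|N_{k/\QQ}(\Delta(\vy))|^{1/3}$ via \cite[Theorem~2.4]{mikey}) then gives convergence precisely when $\alpha>a_0+a_1+e$. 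Your plan does not identify this mechanism. Second, Theorem~\ref{thm:conic_bundle_eqn} requires removing the multisection $x_2=0$, whereas Theorem~\ref{thm:dP} keeps it in $U$; the paper closes this gap with a separate intersection-theory computation ($C\cdot D\geq 1+\alpha>2$, so $C$ contributes $O(B^{2/(1+\alpha)})$), which your proposal does not address.
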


In Theorem \ref{thm:dP}, and throughout the rest of this paper, we take $\alpha$ to be a rational number.
For $\alpha \geq 0$, the $\QQ$-divisors $-K_X + \alpha F$, being the sum of an ample divisor and a semi-ample divisor, are ample.
Theorem \ref{thm:dP} agrees with the Batyrev--Manin conjecture (see \S \ref{sec:conjectures})
and applies, for example, to cubic surfaces with a line.
It proves, for the first time, a case of the Batyrev--Manin conjecture for smooth cubic surfaces
with respect to a height function associated to some ample line bundle. (Facts about del Pezzo surfaces
with a conic bundle structure can be found in \cite[\S5]{FLS16}.)

We also obtain results which apply to more general conic bundles.
Note that for a conic bundle surface $\pi:X \to \PP^1$, in general the anticanonical divisor $-K_X$ won't be big. However, if $F$
is a fibre of $\pi$, then the $\QQ$-divisors $-K_X + \alpha F$ will be big for sufficiently large $\alpha$, and these provide us with a natural class
of height functions satisfying the Northcott property on some open subset. Our result is as follows.

\begin{theorem} \label{thm:conic_bundle}
	Let $\pi:X \to \PP^1$ be a conic bundle surface with anticanonical divisor $-K_X$ and fibre $F$.
	Let $\alpha > (8-K_X^2)/3$ and let $H$ be a choice of height function associated to the $\QQ$-divisor $-K_X + \alpha F$.
	There exists a proper closed subset $E \subset X$ such that for all open dense subsets $U \subset X \setminus E$
	with $U(k) \neq \emptyset$ we have
	$$N(U,H,B) \sim c_{U,H} B, \quad \mbox{ as } B \to \infty,$$
	for some $c_{U,H} > 0$.
\end{theorem}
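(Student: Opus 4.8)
The plan is to reduce the counting problem on $X$ to a counting problem in the fibres of $\pi$, summed over a well-chosen family of rational points on the base $\PP^1$. First I would fix an open dense $U$ avoiding the singular fibres and the finitely many fibres of arithmetic interest (the set $E$ will be built from the singular fibres together with fibres $\pi^{-1}(t)$ where $t$ runs over a bounded set of exceptional points), so that every fibre $\pi^{-1}(t)$ meeting $U$ is a smooth conic over $k$ with a rational point, hence $\cong \PP^1_k$. On such a fibre the restriction of the height $H$ attached to $-K_X + \alpha F$ behaves, up to bounded multiplicative constants depending on $t$, like a power of the anticanonical height on $\PP^1$: writing the height in terms of the conic bundle data, a point $x \in \pi^{-1}(t)(k)$ with $\pi(x) = t$ of height $\|t\|$ on $\PP^1$ contributes a local factor so that the fibrewise count of points with $H(x) \le B$ is asymptotically a constant times $B^{?}/\|t\|^{?}$; the exponents are forced by the numerical class $-K_X + \alpha F = (-K_X) + \alpha F$ paired against a section and against $F$. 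The constraint $\alpha > (8 - K_X^2)/3$ is exactly what makes the resulting Dirichlet series in $\|t\|$ converge and the contribution dominated by the fibral direction, giving the clean power $B^1$ with no logarithm.

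The key steps, in order, are: \emph{(i)} Choose a conic bundle model $X \hookrightarrow \PP(\mathcal{E})$ over $\PP^1$ and express $H$ explicitly in coordinates, identifying the two ``directions'' of the height — the base direction measured by the Weil height $H_{\PP^1}(\pi(x))$ and the fibre direction. \emph{(ii)} For a fixed smooth fibre with a rational point, parametrise the fibre by $\PP^1_k$ and estimate the number of rational points of bounded $H$-height in that fibre via Schanuel/Lipschitz-type lattice point counting, obtaining $N_t(B) \sim c_t B / H_{\PP^1}(t)^{\beta}$ for the appropriate exponent $\beta > 2$ (the inequality on $\alpha$ translates into $\beta > 2$). \emph{(iii)} Control the error terms in \emph{(ii)} uniformly in $t$ — this is where the geometry of the conic bundle enters, since the implied constants and the value $c_t$ depend on the discriminant/conductor of the fibre, and one needs the dependence to be polynomially bounded so that summing over $t$ stays convergent. \emph{(iv)} Sum over $t \in \PP^1(k)$, splitting off the finitely many ``bad'' fibres into $E$, and evaluate the resulting convergent sum $\sum_t c_t / H_{\PP^1}(t)^{\beta}$ to produce the leading constant $c_{U,H} > 0$; confirm positivity from $U(k) \neq \emptyset$, which guarantees at least one smooth fibre with a rational point contributes. \emph{(v)} Check that restricting from $X \setminus E$ to an arbitrary open dense $U \subset X \setminus E$ only removes a lower-order contribution (finitely many more fibres, or a thin subset of each fibre), leaving the asymptotic unchanged.

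The main obstacle I anticipate is Step \emph{(iii)}: making the fibrewise asymptotic \emph{uniform} in the base parameter $t$ with an explicitly bounded dependence on $t$ of both the main term $c_t$ and the error. One must track how the conic $\pi^{-1}(t)$ degenerates as $t$ approaches a singular value, how its rational parametrisation distorts the height, and ensure the error term in the lattice count (which on $\PP^1_k$ over a number field already requires the full strength of Schanuel with a controlled remainder) does not, after summation over all $t$ of bounded height, overwhelm the main term. The condition $\alpha > (8-K_X^2)/3$ gives the necessary room: it ensures the fibral exponent dominates, so even somewhat lossy uniform error estimates in $t$ suffice. A secondary technical point is the precise construction of $E$: one needs it to contain all singular fibres and all fibres where the local height computation misbehaves, while being a \emph{proper} closed subset, which it is since $\pi$ has only finitely many singular fibres and the exceptional locus within smooth fibres is likewise finite.
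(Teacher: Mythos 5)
Your high-level strategy — choose a conic bundle model, count in the fibres, sum over the base, and identify $c_{U,H}$ as a sum of fibre-wise Peyre constants — coincides with the paper's. But there are two substantive gaps.

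First, your exceptional set $E$ is wrong. You propose that $E$ consist of the singular fibres together with finitely many more fibres $\pi^{-1}(t)$ over a bounded set of exceptional $t$. This cannot suffice: after embedding $X$ in $\FF_1(a_0,a_1,a_2)$ with $a_0\leq a_1\leq a_2$, the paper's $E$ must also contain the \emph{horizontal} divisor $\{x_2=0\}$, a multisection which dominates $\PP^1$ and hence is not contained in any finite union of fibres. This is not an optional precaution. Example~\ref{exa:Northcott} exhibits a smooth conic bundle surface in $\FF_1(0,0,a)$ and a section $C\subset\{x_2=0\}$ with $C\cdot(-K_X+\alpha F)<0$, so that $C$ contains infinitely many points of height $\leq B$ for any $B>0$ (the Northcott property already fails on $C$). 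Removing only fibres leaves $C$ intact and the claimed asymptotic is false. The source of the problem is the asymmetry among the $a_i$: your plan never sees it because you never choose an explicit model.

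Second, your Step (iii) — "control the error terms in (ii) uniformly in $t$" — is both harder than necessary and hides the actual difficulty. The paper does not prove, and does not need, a uniform fibre-wise asymptotic. Instead it uses dominated convergence on the exact identity $N(U,H,B)/B=\sum_t N(X_t\cap U,H,B)/B$: it suffices to have (a) the pointwise limit $N(X_t\cap U,H,B)/B\to c_t$ for each fixed $t$ (Peyre's theorem for $\PP^1$, with no uniformity at all), and (b) a $B$-independent dominating bound $N(X_t\cap U,H,B)/B\leq g(t)$ with $\sum_t g(t)<\infty$. The dominating bound (b) is supplied by a uniform \emph{upper} bound for rational points on a ternary conic (Browning and Swarbrick Jones, Lemma~\ref{lem:BSJ}), whose saving depends on the discriminant through a factor $(\det A)^{-1/3}$ times a divisor-function factor $\tau_k(\Delta(t))\ll_\varepsilon H(t)^{\varepsilon\deg\Delta}$. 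Note that this $\varepsilon$-loss is not cosmetic: the Peyre constants of the fibres genuinely oscillate by powers $H(t)^{\pm\varepsilon}$, and the paper in fact shows that the expected bound $\tau(X_t)\asymp H(t)^{-n-1-\alpha}$ (Conjecture~\ref{conj:BT}) \emph{fails} in both directions. So you cannot write $c_t\ll H(t)^{-\beta}$ with a clean $\beta>2$ and deduce convergence; you must track the arithmetic factor in $\Delta(t)$, and the dominated-convergence framing is what lets you do this without any uniformity in the remainder term.
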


One needs to avoid a subset $E \subset X$ in Theorem \ref{thm:conic_bundle} as there can be accumulating subvarieties 
in general (the Northcott property may even fail on some curves for our height function). One can make very explicit which curves need
to be removed; see Theorem \ref{thm:HDP} and Example \ref{exa:Northcott}.

The leading constant $c_{U,H}$ in Theorem \ref{thm:conic_bundle} is the sum of the Peyre constants
of the smooth fibres of $\pi$ (see Theorem \ref{thm:main} for an explicit equation). We explain in \S\ref{sec:conjectures}
how this agrees with the conjectural constant proposed by Batyrev and Tschinkel in \cite{BT_Tamagawa}.
However, we show that Conjecture 3.5.1 from \emph{loc.~cit.}, concerning the distribution
of the Tamagawa measures in the family, is in fact \emph{false} in our case (the first counter-examples to this were found by Derenthal and Gagliardi \cite{DG16}).

\subsection{Higher dimensional conic bundles} \label{sec:HD}

Our results also apply to some other higher dimensional conic bundles over a number field $k$.
Our most general result in higher dimensions is Theorem \ref{thm:HDP};
for simplicity we state some special cases here.
For example, we can handle some hypersurfaces in biprojective spaces. %\chris{not all cases covered by this result are conic bundles}

\begin{theorem} \label{thm:bi_projective}
	Let $X \subset \PP^n \times \PP^2$ be a smooth biprojective hypersurface over $k$ of bidegree $(e,2)$
	for some $e$ and let $\pi: X \to \PP^n$ be the natural projection.
	Let $\OO_X(F) = \OO_X(1,0)$ and let $H$ a choice of height associated to 
	$-K_{X} + \alpha F$ for $\alpha > e$. Let $V \subset \PP^n$ be an open subset such that $\pi^{-1}(V) \to V$ is a smooth morphism and let  $U \subset \pi^{-1}(V)$ be an open subset with $U(k) \neq \emptyset$.
	Then 
	$$N(U,H,B) \sim c_{U,H} B, \quad \mbox{ as } B \to \infty,$$
	for some $c_{U,H} > 0$.
\end{theorem}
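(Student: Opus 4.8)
The natural approach is to realise $\pi\colon X\to\PP^n$ as an instance of the general conic bundle counting result, Theorem~\ref{thm:HDP}, whose hypotheses are purely geometric, and then to apply it; below I sketch the mechanism it encodes, since that is where the substance lies. The first step is to pin down the divisor class and the height. Adjunction on $\PP^n\times\PP^2$, together with $-K_{\PP^n\times\PP^2}=\OO(n+1,3)$ and $[X]=\OO(e,2)$, gives $-K_X=\OO_X(n+1-e,1)$, hence
\[
 -K_X+\alpha F=\OO_X(a,1)|_X,\qquad a:=n+1-e+\alpha,
\]
and the hypothesis $\alpha>e$ is exactly $a>n+1$; as $a>n+1>0$ this $\QQ$-divisor is the restriction of an ample class. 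The argument works for any adelic metric, but for exposition I take $H(x)=H_{\PP^n}(\pi(x))^{a}\,H_{\PP^2}(x)$.

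The second step fibres the count over the base. For $x\in\pi^{-1}(V)(k)$ with $v=\pi(x)$ the fibre $X_v\subset\PP^2$ is a smooth plane conic and $H(x)=H_{\PP^n}(v)^{a}H_v(x)$, where $H_v$ is the height on $X_v$ associated to $\OO_{\PP^2}(1)|_{X_v}$, a line bundle of degree $2$; so
\[
 N(U,H,B)=\sum_{v\in V(k)}N\bigl(U\cap X_v,\ H_v,\ B/H_{\PP^n}(v)^{a}\bigr).
\]
A summand vanishes unless $X_v(k)\neq\emptyset$, which by the Hasse--Minkowski theorem for conics means precisely that $X_v$ is everywhere locally soluble; in that case $X_v\cong\PP^1_k$ and $H_v$ is an anticanonical height on $\PP^1$, so a version of Schanuel's theorem for $\PP^1$ with a varying adelic metric should give
\[
 N(U\cap X_v,H_v,T)=c_v\,T+O\bigl(H_{\PP^n}(v)^{\kappa}T^{\theta}\bigr),\qquad \theta<1,\ \kappa\geq 0,
\]
with $c_v>0$ equal to Peyre's constant of $X_v$ --- that is, its Tamagawa number, since $\Br(\PP^1_k)/\Br(k)=0$. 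Replacing $X_v$ by $U\cap X_v$ deletes only a uniformly bounded number of points, except for a thin set of $v$ where $U\cap X_v=\emptyset$; neither affects the leading term.

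The third step sums over $v$: only $v$ with $H_{\PP^n}(v)\leq B^{1/a}$ contribute (a rational point of $X_v\subset\PP^2$ has height $\geq 1$), and substituting $T=B/H_{\PP^n}(v)^{a}$ one obtains
\[
 N(U,H,B)=B\sum_{v}\frac{c_v}{H_{\PP^n}(v)^{a}}+O\Bigl(B^{\theta}\!\!\!\sum_{H_{\PP^n}(v)\leq B^{1/a}}\!\!\!\!H_{\PP^n}(v)^{\kappa-a\theta}\Bigr)+O\bigl(B^{(n+1)/a}\bigr),
\]
where the main series runs over the $v\in V(k)$ with $U\cap X_v\neq\emptyset$. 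Since the $c_v$ are uniformly bounded and $\#\{v\in\PP^n(k):H_{\PP^n}(v)\leq Y\}=O(Y^{n+1})$ by Schanuel, this series converges precisely because $a>n+1$; it is positive because $U(k)\neq\emptyset$ forces some $X_v$ to have a rational point and the Tamagawa number of such a conic is positive. (This realises $c_{U,H}$ as a sum of Peyre constants of the fibres, as predicted by Batyrev and Tschinkel.) The tail term $O(B^{(n+1)/a})$ and, after partial summation with Schanuel on $\PP^n$, the error sum --- of size $O(B^{(n+1+\kappa)/a})$ up to logarithms --- are both $o(B)$, provided the exponents can be made to close for every $\alpha>e$.

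The main obstacle is exactly that last proviso: one needs Schanuel's theorem on the conic fibres in a form uniform across the whole family $\{X_v\}_{v\in V}$ --- controlling both the leading Tamagawa constant and the error term as $X_v$ varies --- with the dependence on $v$ (equivalently on the discriminant of $X_v$) mild enough, on average over $v\in\PP^n(k)$, that the error summation above stays $o(B)$ down to $\alpha=e$. Establishing this --- parametrising $X_v(k)$ by $\PP^1(k)$ through a rational point of controlled height, bounding the resulting binary forms and the associated local densities in terms of $H_{\PP^n}(v)$, and summing the errors against Schanuel on the base --- is where essentially all the work goes; by comparison, checking that $\pi\colon X\to\PP^n$ is a conic bundle with the numerical invariants demanded by Theorem~\ref{thm:HDP} is routine.
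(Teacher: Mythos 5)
Your overall plan is right and coincides with the paper's: the result reduces immediately to Theorem~\ref{thm:HDP} with $a_0=a_1=a_2=0$, for which the threshold $\alpha>e+2(a_0+a_1+a_2)/3$ becomes exactly $\alpha>e$, and the bookkeeping $-K_X+\alpha F=\OO_X(n+1-e+\alpha,1)$ matches. Two points, however, deserve flagging. First, Theorem~\ref{thm:HDP} is stated for $U$ avoiding the hypersurface $x_2=0$, a hypothesis absent from Theorem~\ref{thm:bi_projective}; you pass over this silently. The paper notes it explicitly: because $a_0=a_1=a_2=0$ the coordinate $x_2$ plays no distinguished role (the bound in Lemma~\ref{lem:y_small} holds with any $x_i\neq 0$), and one checks directly that the contribution from $x_2=0$ is $o(B)$. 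This is easy, but it is part of the reduction and should be said.

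Second, your gloss on the mechanism inside Theorem~\ref{thm:HDP} differs from the paper's and contains a false step. The paper does not sum a uniform Schanuel error term over the fibres; it avoids any uniform error estimate by establishing a \emph{uniform upper bound} for $N(X_y\cap U,H,B)/B$ via the Browning--Swarbrick Jones conic estimate (Lemma~\ref{lem:BSJ}), verifying this bound is summable over $y$ (Lemmas~\ref{lem:y_height_bound}, \ref{lem:upper_bound_conv}), and then invoking \emph{dominated convergence} together with Peyre's non-uniform asymptotic for each fibre. In your sketch you instead propose explicit error terms of shape $O(H(v)^{\kappa}T^{\theta})$ and partial summation, and in the process you assert ``the $c_v$ are uniformly bounded''. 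That is not true: as the paper shows in \S4.2, the Tamagawa measures of the fibres are \emph{not} uniformly comparable to $H(y)^{-(n+1+\alpha)}$; the fibre constants can grow like $2^{\omega(\Delta(y))}$ (so only $\ll_\varepsilon H(y)^{\varepsilon}$, not $O(1)$). Your convergence conclusion $\sum_v c_v H_{\PP^n}(v)^{-a}<\infty$ is still correct because $a>n+1$ with room to spare for an $\varepsilon$, but the justification as written is wrong, and it is precisely this lack of uniformity that makes the dominated-convergence route cleaner than the error-summation route you were heading towards.
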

Theorem \ref{thm:bi_projective} applies to the family of diagonal plane
conics
\begin{equation} \label{eqn:diagonal}
	y_0x_0^2 + y_1x_1^2 + y_2x_2^2 = 0 \quad \subset \PP^2 \times \PP^2,
\end{equation}
and to the family of all plane conics
$$y_{00}x_0^2 + y_{01}x_0x_1 + y_{02}x_0x_2 + y_{11}x_1^2 + y_{12}x_1x_2 + y_{22}x_2^2 = 0 \quad \subset \PP^5 \times \PP^2.$$
Le Boudec \cite{Bou15} has proved upper and lower bounds of the correct order of magnitude for the anticanonical height function for \eqref{eqn:diagonal}, but an asymptotic formula for the anticanonical height is still unknown in this case.

Our methods also apply to cubic hypersurfaces.

\begin{theorem} \label{thm:cubic} 
  Let $X\subset\PP^{n+2}$ be a smooth cubic
  hypersurface of dimension $n+1$ over $k$ with a line $L \subset X$.  Let
  $\widetilde{X}$ be the blow-up of $L$ and $\pi: \widetilde{X} \to \PP^{n}$
  the morphism induced by projecting away from $L$. Let $F$ be the pull-back of the hyperplane class on $\PP^n$, 
  $\alpha > 2$ and $H$ a choice of height associated to
  $-K_{\widetilde{X}} + \alpha F$.  Let $V \subset \PP^n$ be an open subset such that $\pi^{-1}(V) \to V$ is a smooth morphism and let  $U \subset \pi^{-1}(V)$ be an open subset with $U(k) \neq \emptyset$. Then
	$$N(U,H,B) \sim c_{U,H} B, \quad \mbox{ as } B \to \infty,$$
	for some $c_{U,H} > 0$.
\end{theorem}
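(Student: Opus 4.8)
The plan is to deduce Theorem~\ref{thm:cubic} from the general conic bundle result Theorem~\ref{thm:HDP}: I would exhibit $\pi\colon\widetilde X\to\PP^n$ as a conic bundle of the shape treated there, identify $-K_{\widetilde X}+\alpha F$ with the $\QQ$-divisor appearing in that theorem, and show that $\alpha>2$ implies its numerical hypothesis. First, the geometry. Choosing coordinates with $L=\{x_2=\dots=x_{n+2}=0\}$, projection away from $L$ is the rational map $\PP^{n+2}\dashrightarrow\PP^n$, $[x_0:\dots:x_{n+2}]\mapsto[x_2:\dots:x_{n+2}]$, whose fibres are the $2$-planes containing $L$. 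Since $L\subset X$, each such plane meets $X$ in $L$ together with a residual conic, and blowing up $L$ resolves the map to give $\pi\colon\widetilde X\to\PP^n$ with exceptional divisor $E$; one has $F=\pi^*\OO_{\PP^n}(1)=\sigma^*H-E$, where $\sigma\colon\widetilde X\to X$ is the blow-up and $H$ the hyperplane class. From $K_{\widetilde X}=\sigma^*K_X+(n-1)E$ and $K_X=-nH|_X$ one obtains $-K_{\widetilde X}=n\sigma^*H-(n-1)E=nF+E$, whence
\[
	-K_{\widetilde X}+\alpha F=(n+\alpha)F+E, \qquad -K_{\widetilde X/\PP^n}=E-F ,
\]
and the relative anticanonical class $E-F$ has degree $2$ on each fibre, as it must.

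Second, I would put the fibration into the standard form used in Theorem~\ref{thm:HDP}. Grouping the coefficients of the defining cubic form of $X$ according to their degree in $x_2,\dots,x_{n+2}$ exhibits $\widetilde X$ inside the $\PP^2$-bundle $\PP(\OO^{2}\oplus\OO(-1))\to\PP^n$ (the universal plane through $L$) as the zero locus of a section of $\OO(2)\otimes\pi^*\OO_{\PP^n}(1)$; from this one reads off the discriminant divisor $\Delta\subset\PP^n$ and checks that $\pi$ is a smooth conic bundle over every $V\subseteq\PP^n\setminus\Delta$. One then verifies that the height attached to $(n+\alpha)F+E$ coincides, up to bounded factors, with the height used in Theorem~\ref{thm:HDP} (the precise choice of adelic metric affects only the constant $c_{U,H}$, not positivity), and that its numerical hypothesis holds. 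The feature distinguishing this case from Theorem~\ref{thm:bi_projective} is that the ambient $\PP^2$-bundle is \emph{twisted}: relative to the flat coordinates $x_0,x_1$ the third coordinate carries an $\OO(-1)$, which distorts the fibrewise height by a power of the base height. Concretely, on $E$ the height attached to $(n+\alpha)F+E$ is comparable to $H_{\PP^1}(\sigma(x))\cdot H_{\PP^n}(\pi(x))^{\,n+\alpha-1}$; the rational points of $E$ lie only over the thin set of $y$ for which the residual conic meets $L$ in rational points, and counting them yields $O_\varepsilon(B^{(n+1)/(n+\alpha-1)+\varepsilon})$, which is $o(B)$ precisely when $\alpha>2$. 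This is exactly why $U$ may be taken to be any dense open subset of $\pi^{-1}(V)$, with no further curves removed (for $n=1$ the statement is subsumed by Theorem~\ref{thm:dP}, so one may assume $n\ge 2$, where moreover $\Pic X=\ZZ$ by the Lefschetz theorem).

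With these verifications in place, Theorem~\ref{thm:HDP} applies and gives $N(U,H,B)\sim c_{U,H}B$, with $c_{U,H}>0$ obtained as the appropriate sum of the Peyre-type constants of the smooth conic fibres of $\pi$ (finite and positive by $\alpha>2$ and $U(k)\neq\emptyset$ respectively). The bulk of the work lies in the second step, and the part requiring real care is tracking how the twist $\OO^{2}\oplus\OO(-1)$ enters the height normalisation, so that the abstract numerical condition of Theorem~\ref{thm:HDP} specialises to precisely $\alpha>2$, together with the verification that the only accumulating subvariety meeting $\pi^{-1}(V)$ is $E$ and that this bound renders it negligible.
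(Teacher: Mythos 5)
Your proposal is correct and follows essentially the same route as the paper: identify the blow-up of $\PP^{n+2}$ along $L$ with a $\PP^2$-bundle over $\PP^n$ (the paper calls it $\FF_n(0,0,1)=\PP(\OO\oplus\OO\oplus\OO(1))$ in the Grothendieck quotient convention, which is your $\PP(\OO^2\oplus\OO(-1))$ in the subspace convention), observe that the strict transform $\widetilde X$ is a hypersurface of bidegree $(1,2)$ so that Theorem~\ref{thm:HDP} applies with $(a_0,a_1,a_2)=(0,0,1)$, $e=1$, and then show that the contribution from the exceptional divisor $\{x_2=0\}$ is $O(B^{(n+1)/(n+\alpha-1)})=o(B)$ precisely when $\alpha>2$. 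The only stylistic differences are that you expand the divisor-class computation ($-K_{\widetilde X}=nF+E$, etc.) and separate off $n=1$, neither of which is needed for the argument to go through.
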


%\subsubsection{Cubic hypersurfaces with a line}
%Can we obtain interesting upper/lower bounds for rational points of bounded height on cubic hypersurfaces
%with a line, by blowing-up and using conic bundles? It would need to be able to beat the determinant method.
%For $X \subset \PP^n$ we would have $K_{\widetilde{X}} = f^*K_X + (n-3)E = $

%Lower bounds might be hard, but what about upper bounds?
%
%One line looks like it is not enough. What about 3 coplanar lines?

\subsection*{Acknowledgements} We thank Efthymios Sofos for suggesting the
problem and for useful discussions, and Per Salberger for pointing out some oversights in an earlier version of Theorem~\ref{thm:cubic}.
We are also grateful to Sho Tanimoto for help with the proof of Lemma \ref{lem:a_b}, and thank the referees for useful comments.

\section{Conic bundles and projective bundles} \label{sec:conics}

\subsection{Conic bundles}

\begin{definition}
	A conic bundle over a field $k$ is a proper morphism $\pi:X \to Y$
	 of smooth 
	varieties over  $k$ whose fibres are isomorphic to plane conics.
\end{definition}

The anticanonical bundle $\omega_{X}^{-1}$ induces the anticanonical bundle on each smooth
fibre of $\pi$. The pushforward $\mathscr{E} := \pi_* (\omega_{X}^{-1})$ is a vector
bundle of rank $3$ which induces an  embedding $X \hookrightarrow \PP(\mathscr{E})$ such that $\pi$
is compatible with the natural projection $\PP(\mathscr{E}) \to Y$. (This follows from an application
of \cite[Prop.~1.1.6, Lem.~1.1.8]{BenoistThesis}, for example.)
%Moreover $X$ is the zero locus of an element of $Q \in \HH^0(Y, \mathrm{S}^2\mathscr{E})$.
We follow the Grothendieck convention regarding projective bundles, namely that $\PP(\mathscr{E})$ denotes the space
of $1$-dimensional quotients of $\mathscr{E}$.

\subsection{Projective bundles}
We work with special choices of projective bundles in order to make the above set-up
and the resulting height functions explicit. We work over a field $k$, assumed to not have characteristic $2$
for simplicity. The theory presented here is just a mild generalisation to higher dimensions of \cite[\S2]{FLS16}.

Let $a_0,a_1,a_2 \in \ZZ$. We consider the following $\PP^2$-bundles over $\PP^n$:
\begin{equation} \label{def:F}
	\FF_n(a_0,a_1,a_2) := \PP_{\PP^n}( \OO_{\PP^n}(a_0) \oplus \OO_{\PP^n}(a_1) \oplus \OO_{\PP^n}(a_2)).
\end{equation}
Note that permuting the $a_i$ or replacing $(a_0,a_1,a_2)$ by $(a_0+f,a_1+f,a_2+f)$ gives an isomorphic $\PP^2$-bundle.
We let $M$ be the class of the relative hyperplane bundle and $F$ the pull-back of the hyperplane class on $\PP^n$
in the Picard group.
The bundle $\FF_n(a_0,a_1,a_2)$ can be constructed as an explicit quotient of the space $(\mathbb{A}^{n+1}\setminus 0) \times (\mathbb{A}^{3}\setminus 0)$
by the following action of $\Gm^2$:
\begin{equation} \label{eqn:Gm^2}
	(\lambda,\mu) \cdot (y_0,\dots,y_n;x_0,x_1,x_2) = (\lambda y_0, \dots, \lambda  y_n;\lambda^{-a_0} \mu x_0,\lambda^{-a_1} \mu x_1,\lambda^{-a_2} \mu x_2).
\end{equation}
We therefore obtain well-defined coordinates $(y_0:\dots:y_n;x_0:x_1:x_2) = (y;x)$ on $\FF_n(a_0,a_1,a_2)$
which are bihomogenous with respect to the action \eqref{eqn:Gm^2}.

A hypersurface of bidegree $(2,e)$ in $\FF_n(a_0,a_1,a_2)$
has an equation of the shape
\begin{equation} \label{eqn:conic_bundle}
	\sum_{0 \leq i,j \leq 2} f_{i,j}(y)x_ix_j= 0.
\end{equation}
Throughout this paper, we follow the convention that $f_{i,j} = f_{j,i}$. We work with the natural projection $\pi:X \to \PP^n$, which restricts to a conic bundle morphism above any open subset $V \subset \PP^n$ for which $\pi^{-1}(V) \to V$ is flat. 
Bihomogeneity implies that the degrees of the $f_{i,j}$ are given by the following matrix
\begin{equation} \label{eqn:degree_matrix}
\left( \begin{array}{ccc}
2a_0 + e & a_0 + a_1 + e & a_0 + a_2 + e \\
a_0 + a_1 + e & 2a_1 + e & a_1 + a_2 + e \\
a_0 + a_2 + e & a_1 + a_2 + e & 2a_2 + e \end{array} \right).
\end{equation}

\begin{lemma} \label{lem:P}
	Let $X$ be a smooth hypersurface of bidegree $(2,e)$ in 
	$\FF_n(a_0,a_1,a_2)$. Let $\Delta \in k[y_0,\ldots,y_n]$ be the associated
	discriminant polynomial, i.e.~the determinant of the matrix of the 
	quadratic form defining \eqref{eqn:conic_bundle}. Then
	\begin{enumerate}
		\item $-K_X = M + ((n+1) - a_0 - a_1 - a_2 - e)F$.
		\item The discriminant $\Delta$ is a squarefree homogeneous polynomial with $\deg \Delta = 2(a_0 + a_1 + a_2) + 3e$.
%		\item If $k$ is a number field, then a choice of height function associated to $-K_X + \alpha F$ is
	\end{enumerate}
      \end{lemma}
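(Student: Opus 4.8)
The plan is to compute both assertions directly from the explicit $\Gm^2$-description of $\FF_n(a_0,a_1,a_2)$ and the defining equation \eqref{eqn:conic_bundle}. For part (1), I would first recall the standard formula for the canonical bundle of a projective bundle: for $\mathscr{E} = \OO(a_0) \oplus \OO(a_1) \oplus \OO(a_2)$ on $\PP^n$ one has, in the Grothendieck convention, $-K_{\PP(\mathscr{E})} = 3M + (\pi^*(-K_{\PP^n}) - \pi^*\det\mathscr{E})$, i.e. $-K_{\FF_n} = 3M + ((n+1) - a_0 - a_1 - a_2)F$, since $\det\mathscr{E} = \OO((a_0+a_1+a_2))$ and $-K_{\PP^n} = \OO(n+1)$. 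Then $X$ is a hypersurface of bidegree $(2,e)$, so $\OO_{\FF_n}(X) = 2M + eF$, and adjunction gives $-K_X = (-K_{\FF_n} - X)|_X = (3M + ((n+1)-a_0-a_1-a_2)F - 2M - eF)|_X = M + ((n+1) - a_0-a_1-a_2 - e)F$. (The only mild subtlety is making sure the Grothendieck sign conventions are applied consistently, since this paper explicitly uses $\PP(\mathscr{E})$ for quotients; I would double-check against the rank-two case in \cite[\S2]{FLS16}.)

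For part (2), the degree statement is pure bookkeeping: $\Delta$ is the $3\times 3$ determinant of the symmetric matrix $(f_{i,j})$ whose entries have the degrees displayed in \eqref{eqn:degree_matrix}. Expanding the determinant, every monomial term is a product $f_{i,\sigma(i)}$ over a permutation $\sigma$ of $\{0,1,2\}$, and summing the corresponding degree-matrix entries along any such ``generalized diagonal'' gives $2(a_0+a_1+a_2) + 3e$ regardless of $\sigma$ — indeed each $a_i$ appears exactly twice (once in row $i$, once in column $i$) and $e$ appears three times. Hence $\Delta$ is homogeneous of degree $2(a_0+a_1+a_2)+3e$ in $y_0,\dots,y_n$, provided it is not identically zero; but $\Delta \equiv 0$ would force every fibre of $\pi$ to be a singular conic, contradicting smoothness of $X$ (a smooth $X$ has smooth conic fibres over a dense open set).

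The substantive point is \emph{squarefreeness} of $\Delta$, and this is where I expect the real work to lie. The idea is that a repeated factor of $\Delta$ would mean the conic bundle degenerates ``too badly'' along that divisor, and this should be incompatible with $X$ being smooth. Concretely, I would argue as follows: pick a point $y^{(0)} \in \PP^n$ on the (reduced) vanishing locus of an irreducible factor $g$ of $\Delta$, generic enough that the fibre over $y^{(0)}$ is a conic of rank exactly $2$ (a line pair) — this is the generic degeneration along any component of $\{\Delta = 0\}$, since rank $\leq 1$ happens only in codimension $\geq 3$ in the space of conics. Working in local analytic (or formal/étale) coordinates on $\PP^n$ near $y^{(0)}$ with $g$ as one coordinate, one writes the matrix of the quadratic form as $A(y) = A_0 + g\cdot B + O(g^2)$ with $A_0$ of rank $2$, so $\det A(y) = g \cdot \langle\text{stuff}\rangle + O(g^2)$, and $g^2 \mid \Delta$ would force the linear-in-$g$ term of $\det A$ to vanish, i.e. a condition relating $B$ to $A_0$. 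I would then show that this forced vanishing produces a singular point of $X$ on the fibre over $y^{(0)}$: in suitable coordinates the rank-$2$ conic is $x_0 x_1 = 0$ with node at $[0:0:1]$, and the condition ``$\partial_y \det A = 0$'' translates precisely into the vanishing of the derivative of the defining equation of $X$ in the $y$-directions at $([0:0:1]; y^{(0)})$, while the $x$-derivatives already vanish at the node of the conic — contradicting smoothness of $X$. Organising this local computation cleanly (choosing coordinates so that $A_0$ is diagonalised and tracking which partials vanish) is the main obstacle; everything else is routine. Again the rank-two analogue in \cite[\S2]{FLS16} should serve as a template.
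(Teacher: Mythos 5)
Part (1) and the degree computation in part (2) are essentially the paper's argument; the paper cites \cite[Ex.~III.8.4]{Har77} for the canonical class of the $\PP^2$-bundle and observes that the degree of $\Delta$ is the trace of \eqref{eqn:degree_matrix}, which is the same bookkeeping you carry out via generalized diagonals.

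For squarefreeness, you have the right idea --- a local argument along codimension-one components of $\{\Delta=0\}$, using smoothness of $X$ to bound the vanishing order --- but your outline has two genuine gaps. First, you \emph{assume} that the generic fibre over an irreducible component of $\{\Delta=0\}$ has rank exactly $2$, justifying this by the codimension of the rank-$\leq 1$ locus in the universal family of conics; but for a specific $X$ (whose smoothness is the hypothesis, not yet exploited at this point) nothing a priori prevents the rank-$\leq 1$ locus from containing a divisor, so the rank-$\leq 1$ case also needs to be ruled out (and it does give singular points of $X$, but you must show it). Second, the key step --- that the vanishing of the linear-in-$g$ term of $\det A$ forces a singular point of $X$ on the rank-$2$ fibre --- is only sketched, and you rightly flag it as ``the main obstacle''; as written there is no actual contradiction derived. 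The paper sidesteps both issues cleanly: working over the local ring $R$ at a codimension-one point of $\PP^n$, it trivialises the bundle over the DVR and \emph{diagonalises} the quadratic form (using $2\in R^*$), so the equation becomes $r_0x_0^2+r_1x_1^2+r_2x_2^2=0$ with $\Delta_R = u\,r_0r_1r_2$. Then regularity of $X_R$ forces $v(r_0r_1r_2)\leq 1$ by a short Jacobian computation, which handles rank $2$ ($v(r_i)\geq 2$ for one $i$) and rank $\leq 1$ (two $r_i$ with $v\geq 1$) in a single stroke. If you want to push through your coordinate-free version, you need to add the rank-$\leq 1$ case and complete the Jacobian computation; alternatively, adopting the diagonalisation over the DVR avoids both.
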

\begin{proof}
	By \cite[Ex.~III.8.4]{Har77}, the canonical divisor of $\FF_n(a_0,a_1,a_2)$ is $( -(n+1) + a_0 + a_1 + a_2)F -3M$.
	Part (1) therefore follows from the adjunction formula.
	
	For Part (2), the degree of $\Delta$ is calculated by noting that it is simply the trace of the matrix \eqref{eqn:degree_matrix}.
	To prove that $\Delta$ is squarefree, we may work locally around each divisor in $\PP^n$. Let $R$ be the local ring at some codimension $1$
	point of $\PP^n$ and consider $X_R \to \Spec R$; this is regular as $X$ is regular.
	As $R$ is a discrete valuation ring, the vector bundle $\OO_{\PP^n}(a_0) \oplus \OO_{\PP^n}(a_1) \oplus \OO_{\PP^n}(a_2)$ trivialises over $R$.
	Moreover, as $R$ is a local ring with $2 \in R^*$,
	we may diagonalise the equation of $X_R$ over $R$ \cite[Cor.~I.3.4]{MH73} to find that
	$$X_R: \quad r_0x_0^2 + r_1 x_1^2 + r_2x_2^2 = 0 \quad \subset \PP^2_R,$$
	where $r_i \in R$.
	In particular the base change $\Delta_R \in R$ of $\Delta$ is given by 
	$\Delta_R = ur_0r_1r_2$ for some $u \in R^*$. However, as $X_R$ is regular, a calculation
	shows that the valuation of $r_0r_1r_2$ is at most $1$. Applying this to each codimension $1$ point proves the claim.
\end{proof}

\section{Proof of results}

\subsection{Statement}
We begin by considering higher dimensional hypersurfaces in the $\PP^2$-bundles over $\PP^n$ from \eqref{def:F}.  The results from \S \ref{sec:HD}
will be proved using the following.

\begin{theorem} \label{thm:HDP} 
  Let $a_0\leq a_1\leq a_2 \in \ZZ$ and let $X \subset
  \FF_n(a_0,a_1,a_2)$ be a smooth hypersurface of bidegree $(e,2)$ over a
  number field $k$, for some $e \in \ZZ$. Let $\pi: X \to \PP^n$
  be the natural projection and $F$ the pull-back of the hyperplane class on $\PP^n$. Let $V \subset \PP^n$ be an open subset such that $\pi^{-1}(V) \to V$ is a smooth morphism and let $U \subset \pi^{-1}(V)$ be an open subset with $U(k) \neq \emptyset$ which  does not meet the hypersurface $x_2=0$. 
  Let $H$ be a choice of height associated to
  $-K_{X} + \alpha F$ for some $\alpha > e+2(a_0+a_1+a_2)/3$.
  Then
  $$N(U,H,B) \sim c_{U,H} B, \quad \mbox{ as } B \to \infty,$$
  for some $c_{U,H} > 0$.
\end{theorem}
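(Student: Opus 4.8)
The plan is to reduce the counting problem on $X$ to an explicit parametrisation over the base $\PP^n$, fibre by fibre, and then sum the fibrewise counts. First I would make the height function explicit in the bihomogeneous coordinates $(y;x)$ on $\FF_n(a_0,a_1,a_2)$: by Lemma \ref{lem:P}(1) the $\QQ$-divisor $-K_X + \alpha F$ equals $M + (\alpha + (n+1) - a_0-a_1-a_2-e)F$, so an associated height $H$ is comparable (up to bounded factors at each place) to a product of a ``vertical'' height measuring the size of the $y$-coordinates and a ``horizontal'' height measuring the size of the $x$-coordinates relative to the line bundle $M$ twisted by the $y$'s. Concretely, writing $\beta = \alpha + (n+1)-a_0-a_1-a_2-e$, one expects $H(y;x) \asymp \|y\|^{\beta} \cdot H_{M}(y;x)$ with $H_M$ the relative anticanonical height on the conic fibre $X_y \subset \PP^2$; the hypothesis that $U$ avoids $x_2=0$ lets us normalise in the chart $x_2 = 1$ and clear denominators so that each rational point of a fixed fibre $X_y$ corresponds to a primitive integral (or $\OO_k$-, in the general number field case) triple $(x_0,x_1,x_2)$.

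Next I would fix $y \in \PP^n(k)$ with $X_y$ smooth and count $\#\{x \in X_y(k) : H(y;x) \le B\}$. For fixed $y$ this is counting points of bounded anticanonical height on a smooth plane conic $X_y \cong \PP^1$ (when $X_y(k)\neq\emptyset$), with the bound on $H_M$ being roughly $B/\|y\|^{\beta}$. Parametrising $X_y \cong \PP^1$ and applying Schanuel's theorem (over $k$), one gets an asymptotic of the form $c(y)\cdot (B/\|y\|^\beta)^{1} + O((B/\|y\|^\beta)^{\theta})$ for some $\theta<1$, where $c(y)$ is (a version of) the Peyre constant of the fibre $X_y$; uniformity of the error term in $y$ is essential and should follow from an effective version of Schanuel with explicit dependence on the heights of the coefficients of the parametrising forms, which are polynomial in $y$. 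One then sums over $y \in \PP^n(k)$: the main term becomes $B \sum_{y} c(y)\|y\|^{-\beta}$, and one must check this series converges, which is where the lower bound on $\alpha$ enters. Since $c(y)$ is bounded (the conic fibres vary in a bounded family after clearing denominators), convergence of $\sum_y \|y\|^{-\beta}$ over $\PP^n(k)$ needs $\beta > n+1$ — but in fact one needs more care because $c(y)$ can decay or the relevant count can be reduced by local solubility, and the genuinely binding constraint comes from controlling the contribution of fibres $X_y$ that are everywhere locally soluble only for $y$ in a thin/sparse set; quantifying this via the discriminant $\Delta$ (squarefree of degree $2(a_0+a_1+a_2)+3e$ by Lemma \ref{lem:P}(2)) and counting $y$ with $\Delta(y)$ having many prime factors is what forces $\alpha > e + 2(a_0+a_1+a_2)/3$.

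The error terms require two separate estimates that must be dominated by $B$: the sum over $y$ of the individual error terms $O((B/\|y\|^\beta)^\theta)$, which converges and is $O(B^\theta)$ provided the tail in $y$ behaves, hence negligible; and the contribution of ``large'' $y$, say $\|y\| > B^{1/\beta}$, for which the fibrewise count is $O(1)$ or governed by a direct point count on $X$ — here one needs an upper bound $\#\{(y;x): \|y\| > T, H \le B\} = o(B)$, again delivered by the constraint on $\alpha$ together with a dimension-growth-type bound for points on the conic bundle total space. I expect the main obstacle to be precisely this uniformity-plus-tail package: proving that Schanuel's asymptotic on the fibre $X_y$ holds with an error term uniform in $y$ over the relevant range, and simultaneously that the fibres of large discriminant (which fail to be locally soluble often, or contribute sparsely) do not spoil the leading constant — this is the technical heart and is where the precise threshold for $\alpha$ is extracted. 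Assembling these, $N(U,H,B) \sim B \sum_{y \in \pi(U)(k)} c(y)\|y\|^{-\beta} =: c_{U,H} B$ with $c_{U,H} > 0$ as soon as $U(k) \neq \emptyset$ (so at least one fibre contributes), as claimed; identifying $c_{U,H}$ with the sum of Peyre constants of the smooth fibres is then a matter of unwinding the local densities, deferred to the computation of the constant in \S\ref{sec:conjectures}.
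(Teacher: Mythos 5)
Your high-level structure — make the height explicit on $\FF_n(a_0,a_1,a_2)$, restrict it to each smooth conic fibre $X_y$, count points of bounded anticanonical height on each fibre, and sum over $y\in\PP^n(k)$ — matches the paper. But there is a genuine gap where you place the ``technical heart'': you propose to establish for each $y$ an asymptotic $c(y)(B/\|y\|^\beta) + O((B/\|y\|^\beta)^\theta)$ via an effective, $y$-uniform version of Schanuel applied to a parametrisation $\PP^1\to X_y$, and then to sum both main and error terms. You acknowledge you do not have such a uniform asymptotic, and producing one is delicate: the parametrising forms have coefficients growing with $y$, the discriminant of $X_y$ intervenes, and controlling the implied constant (and, over a general number field, the unit-lattice normalisation) across the whole family is a serious problem that your sketch does not resolve. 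Your diagnosis of where the threshold $\alpha>e+2(a_0+a_1+a_2)/3$ comes from is also off: you attribute it to controlling fibres that are locally soluble only rarely, whereas in fact it arises simply as the condition for a certain dominating series over $\PP^n(k)$ to converge.

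The paper avoids the need for any uniform asymptotic by a different and cleaner mechanism. It establishes only a $y$-uniform \emph{upper bound} $N(X_y\cap U,H,B)/B\ll_\epsilon H(y)^{-(a_0+a_1+a_2)/3-A+\epsilon}+H(y)^{-A-a_2+\epsilon}$, obtained by specialising the Browning--Swarbrick Jones bound (Lemma~\ref{lem:BSJ}) to each conic $X_y$ together with the elementary divisibility $\Delta_0(\vy)^3\mid\Delta(\vy)^2$ (Lemma~\ref{lem:minors_norm_bounded_general}) and the bound $H(y)\ll B^{1/(A+a_2)}$ coming from $x_2\neq 0$ on $U$ (Lemma~\ref{lem:y_small}). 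The condition on $\alpha$ is exactly what makes the resulting dominating function summable over $\PP^n(k)$ by Schanuel's theorem on $\PP^n$ (Lemma~\ref{lem:upper_bound_conv}). With that in hand, the \emph{dominated convergence theorem} does the rest: for each fixed $y$ one has $N(X_y\cap U,H,B)/B\to c_y$ with $c_y$ the Peyre constant of $X_y$ (Manin's conjecture for $\PP^1$, with no uniformity required), and dominated convergence both proves that $\sum_y c_y$ converges and justifies interchanging the sum with the limit $B\to\infty$. In short, the key idea you are missing is to replace ``uniform asymptotic on fibres plus summed error terms'' by ``pointwise asymptotic on fibres plus a summable uniform upper bound plus dominated convergence,'' with the Browning--Swarbrick Jones inequality supplying the dominating function.
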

For example, one can take $V$ to be the complement of the discriminant locus $\Delta(y) = 0$ in $\PP^n$.
We choose our $U$ as in the statement of Theorem \ref{thm:HDP} as the fibres over the discriminant locus and the hyperplane $x_2 = 0$ are
accumulating
subvarieties in general. (The hyperplane $x_2 = 0$ defines a degree $2$ multisection of $\pi$.)
That one needs to remove $x_2 = 0$
is illustrated by the following example (note that $x_2$ is special only because we stipulated that $a_2 \geq \max\{a_1,a_0\}.$)

\begin{example} \label{exa:Northcott}
	Let $a \in \ZZ$ satisfy $3 \mid a$ and $a > 9$. Take $\alpha = 2a/3 + 1$ and 
	$D = -K_X + \alpha F$. Let $f$ be a squarefree 
	binary form of degree $2a$ and consider the smooth surface
	$$X: \quad  x_0^2 - x_1^2 = f(s,t) x_2^2 \quad \subset \FF_1(0,0,a),$$
	equipped with its natural conic bundle structure $\pi: X \to \PP^1$.
	Note that $\alpha$ satisfies the assumptions of Theorem \ref{thm:HDP}, with $e=0$.
	
	However, let $C$ be the curve given by $x_0 + x_1 = x_2 = 0$
	(this is a section of $\pi$).	
	Lemma \ref{lem:P} shows that $D =  -K_X + \alpha F =M + (3 - a/3)F$.
	However $C \cdot F = 1$ and $C \cdot M = 0$, hence
	$C \cdot D  = 3 - a/3 < 0$. Thus $C$ contains infinitely many rational points
	of height less than any given $B > 0$ with respect to $D$
	(the failure of the Northcott property on $C$ can also be verified 
	using the explicit description of the height function given in Lemma \ref{lem:standard_height}).
	Thus $C$ must be removed for the conclusion of Theorem \ref{thm:HDP} to hold.
\end{example}
  
  %CF: should we move this remark to directly after the theorem? Also, maybe tis
%formulations need a slight update. 
%DL: Moved! What update are you referring to?
%CF: The $\PP^2$ could be a $\PP^n$, and in "one can make the height
%functions...explicit", one could maybe mention Section 2? 
%DL: How about now?
\begin{remark}
For conic bundles inside the special $\PP^2$-bundles $\PP(\mathcal{E})$ 
with $\mathcal{E}$ a direct sum of three line bundles, one can make 
the height functions and equations of the conic bundle explicit (see \S\ref{sec:Heights} and \S\ref{sec:fibration}). 
From a highbrow perspective, this is because $\PP(\mathcal{E})$ is \emph{toric} in this case,
as reflected in its description \eqref{eqn:Gm^2} as a quotient of an open in an affine
space.

To generalise our method  to general rank $3$ vector bundles $\mathcal{E}$ on $\PP^n$,
one requires an explicit
description of the Cox ring of $\PP(\mathcal{E})$. 
However, it does not even seem to be known whether this ring is always finitely generated
when $\rank \mathcal{E} = 3$
(i.e.~whether $\PP(\mathcal{E})$ is a Mori dream space).
Special cases where finite generation is known include the tangent bundle of $\PP^n$ \cite[Thm.~5.9]{HS10}.
It would be interesting to study conic bundles inside other projective bundles.
(Note that subtleties only arise when $n > 1$,
as every vector bundle on $\PP^1$ is a direct sum of lines bundles \cite[Ex.~V.2.6]{Har77}).
\end{remark}

%Here the height function takes the following form
%$$H(y_0:\cdots:y_n; x_0:x_1:x_2) = \prod_v \max\{|y_i|_v\} \max\{|? x_i\},$$
%where $?x_i$ runs over all monomials of bidegree ? in $y_i$ and $x_i$.

%Is actually not quite the main theorem.... Need to do more work for conic
%bundle surfaces.

%We use the following notation from \cite{FLS16}. Let $(a_0,a_1,a_2) \in \ZZ^2$. We let 
%$$\FF(a_0,a_1,a_2) = \PP( \OO_{\PP^1}(a_1) \oplus \OO_{\PP^1}(a_2) \oplus \OO_{\PP^1}(a_3))$$
%be the projectivisation of the rank $3$ vector bundle $\OO_{\PP^1}(a_1) \oplus \OO_{\PP^1}(a_2) \oplus \OO_{\PP^1}(a_3)$
%on $\PP^1$ (we use \cite{Rei93} as our reference for projective bundles).

%Let us now prove Theorem \ref{thm:conic_bundle_eqn}. We have $\deg \Delta(s,t) = 2(a_0 + a_1 + a_2) + 3e$.

\subsection{Proof strategy of Theorem \ref{thm:HDP}}
Each smooth conic $Q$ in the family has $c_Q B +
o_Q(B)$ points of height at most $B$ for some $c_Q \geq 0$. We will show that
the sum over these contributions is convergent via the dominated
convergence theorem. To achieve this we require a uniform upper bound for the
number of rational points on each conic; this is provided by the following
result due to Browning and Swarbrick Jones. 

We fix, once and for all, a set $\mathcal{C}$ of integral representatives of the ideal classes of $k$. All
implied constants in the paper are allowed to depend on $\mathcal{C}$. We let
$\tau_k(\cdot)$ be the divisor function on ideals of $\OO_k$ and  $\archplaces$
be the set of archimedean places of $k$.

\begin{lemma}[{\cite[Theorem 2.3]{mikey}}] \label{lem:BSJ} Let $A \in
  \mathrm{M}_3(\OO_k)$ be a symmetric matrix which is invertible over $k$ and
  let $Q$ be the associated ternary quadratic form over $\OO_k$. Let
  $\vB_1,\vB_2,\vB_3 \in [0,\infty)^\archplaces$, with
  $\vB_i=(B_{i,v})_{v\in\archplaces}$. Let $N(A,\vB_1,\vB_2,\vB_3)$ be the number
  of all $x\in\PP^2(k)$ that have a representative
  $\vx=(x_0,x_1,x_2)\in\OO_k^3$ with $Q(\vx)=0$, 
        \begin{equation*}
          x_0\OO_k+x_1\OO_k+x_1\OO_k\in\mathcal{C},\ \text{ and }\ \absv{x_i}\leq
        B_{i,v} \text{ for all } 1\leq i\leq 3 \text{ and all } v\in\archplaces. 
        \end{equation*}

  Write $\Delta(A):=\det A$ and let $\Delta_0(A)$ be the ideal of $\OO_k$
  generated by the $(2\times 2)$-minors of $A$. Then
  \begin{equation*}
    N(A,\vB_1,\vB_2,\vB_3) \ll
    \tau_k(\Delta(A))\left(\left(\frac{\norm(\Delta_0(A))^{3/2}\prod_{v\in\archplaces}(B_{1,v}B_{2,v}B_{3,v})^\locdegv
            }{\abs{N_{k/\QQ}(\Delta(A))}}\right)^{1/3} + 1\right)
  \end{equation*}
\end{lemma}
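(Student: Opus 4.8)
The plan is to reduce the count to a lattice-point estimate on the $\PP^1$ that parametrises the conic. If $Q$ has no zero in $\PP^2(k)$ then $N(A,\vB_1,\vB_2,\vB_3)=0$ and there is nothing to prove, since the right-hand side is at least $\tau_k(\Delta(A))\geq 1$ (here $\Delta(A)=\det A\neq 0$ as $A$ is invertible over $k$). So assume the conic $C\colon Q(\vx)=0$ has a $k$-point $P_0$, which we fix once and for all together with a primitive integral representative in $\OO_k^3$. Projection away from $P_0$ realises an isomorphism $\varphi\colon\PP^1\xrightarrow{\ \sim\ }C$ whose components, after clearing denominators, are binary quadratic forms $g_0,g_1,g_2\in\OO_k[s,t]$ with no common factor, depending only on $A$ and on the chosen representative of $P_0$; thus every $x\in C(k)$ is represented by $\bigl(g_0(s,t):g_1(s,t):g_2(s,t)\bigr)$ for some primitive $(s,t)\in\OO_k^2$. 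As $k$ has finitely many ideal classes, at the cost of a bounded factor and of replacing $(g_0,g_1,g_2)$ by one of finitely many fixed twisted systems (each treated identically) we may assume the content ideal of $(s,t)$ lies in $\mathcal{C}$; translating between this normalisation and the one imposed on $\vx$ in the statement costs only another bounded factor. It therefore suffices to bound, for each admissible content ideal $\delta$ of $\bigl(g_0(s,t),g_1(s,t),g_2(s,t)\bigr)$, the number of primitive $(s,t)\in\OO_k^2$ lying in prescribed residue classes modulo $\delta$ and satisfying $\absv{g_i(s,t)}\leq c_v\,\delta_v B_{i,v}$ for all $1\leq i\leq 3$ and all $v\in\archplaces$, where $c_v$ depends only on $\varphi$ and $\delta_v$ records the $v$-adic size of a fixed generator of $\delta$.

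The second step is to pin down $\delta$ and the leading-coefficient data of the $g_i$ in terms of $\Delta(A)$ and $\Delta_0(A)$. Since the $g_i$ have no common factor, a resultant (equivalently, Nullstellensatz) argument shows that $\delta$ always divides a fixed ideal $\ddd$ which is, up to units and bounded factors, a power of $\Delta(A)$; in particular the number of admissible $\delta$ is $\ll_\epsilon\norm(\Delta(A))^\epsilon$, to be absorbed into $\tau_k(\Delta(A))$ at the end. The $v$-adic norms of the leading coefficients of the $g_i$ and of $\delta$ are likewise governed by $\Delta_0(A)$ and $\Delta(A)$; this is the content of applying the local diagonalisation of \cite[Cor.~I.3.4]{MH73} simultaneously at all primes dividing $\Delta(A)$, exactly as in the proof of Lemma~\ref{lem:P}. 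Keeping track of the homogeneity of the maps $A\mapsto\Delta_0(A)$ (weight $2$) and $A\mapsto\Delta(A)$ (weight $3$) is what forces the final bound to involve the combination $\norm(\Delta_0(A))^{3/2}/\abs{N_{k/\QQ}(\Delta(A))}$, which is invariant under rescaling $A$ (as it must be, since rescaling does not change $C$).

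For the count itself, fix $\delta$ and put $G:=g_0g_1g_2\in\OO_k[s,t]$, a nonzero binary form of degree $6$. Every admissible $(s,t)$ satisfies $\absv{G(s,t)}\leq c_v^3\delta_v^3\,B_{1,v}B_{2,v}B_{3,v}$ for all $v\in\archplaces$. A geometry-of-numbers estimate for the region $\{(s,t)\in(k\otimes_\QQ\RR)^2:\absv{G(s,t)}\leq T_v\ \forall v\}$ --- whose volume is, up to the leading coefficient of $G$, of order $\bigl(\prod_{v\in\archplaces}T_v^{\locdegv}\bigr)^{1/3}$, the exponent $1/3=2/6$ coming from $\deg G=6$ --- together with the standard principle that the number of lattice points of the $\delta$-congruence sublattice of $\OO_k^2$ in such a region is bounded by its volume plus a lower-dimensional term plus $1$, bounds the number of admissible $(s,t)$ by
\begin{equation*}
	\ll\ \left(\frac{\norm(\delta)^{3}\prod_{v\in\archplaces}\bigl(B_{1,v}B_{2,v}B_{3,v}\bigr)^{\locdegv}}{(\text{leading data of }G)}\right)^{\!1/3}+1.
\end{equation*}
Summing the finitely many $\delta\mid\ddd$, the largest term dominates, the factors $\norm(\delta)^3$ and the leading-coefficient denominator collapse --- via the weight bookkeeping of the previous step --- into $\norm(\Delta_0(A))^{3/2}/\abs{N_{k/\QQ}(\Delta(A))}$, and the count of $\delta$'s together with the residue-class overhead is absorbed into $\tau_k(\Delta(A))$. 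This gives the asserted estimate.

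The main obstacle is the second step: transporting the arithmetic of the parametrisation to a general number field and showing that the content ideal of the parametrising system and the leading coefficients of the $g_i$ are controlled by $\Delta_0(A)$ and $\Delta(A)$ with precisely the right weights, so that the final bound genuinely has $\norm(\Delta_0(A))^{3/2}$ --- the half-integral power being dictated by homogeneity rather than a crude estimate --- over $\abs{N_{k/\QQ}(\Delta(A))}$, and so that the divisor-and-residue overhead really collapses to the \emph{first} power of $\tau_k(\Delta(A))$. The parametrisation of the conic and the lattice-point count for $G$ are standard; bookkeeping the ideal classes (the role of $\mathcal{C}$) is routine but genuinely needed to align the content normalisations.
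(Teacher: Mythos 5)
This lemma is not proved in the paper: it is quoted verbatim from Browning and Swarbrick Jones (the reference \cite{mikey}, Theorem 2.3). There is therefore no ``paper's own proof'' for me to compare against, and the only question is whether your sketch would stand on its own.

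Your high-level strategy --- parametrise the conic by a rational point, reduce to counting primitive $(s,t)\in\OO_k^2$ with the binary quadratics $g_i(s,t)$ bounded, then apply geometry of numbers --- is a classical and plausible route. But the argument as written leaves unaddressed exactly the points that make the statement non-trivial, namely that the implied constant must be \emph{uniform in} $A$.

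First, you fix ``once and for all'' a primitive integral representative of a rational zero $P_0$ of $Q$, and your constants $c_v$ depend on the resulting isomorphism $\varphi$, hence on $P_0$. Nothing in your argument bounds the size of $P_0$ in terms of $A$, so the $c_v$ are not absolute constants. To repair this one needs a Cassels/Siegel-type theorem over number fields giving a zero of height $\ll \abs{N_{k/\QQ}(\Delta(A))}^{1/2}$, and even granting that, the coefficients of the $g_i$ are bilinear in the entries of $A$ and in $P_0$, while the bound is allowed to depend on $A$ only through $\Delta(A)$ and $\Delta_0(A)$. Extracting the precise combination $\norm(\Delta_0(A))^{3/2}/\abs{N_{k/\QQ}(\Delta(A))}$ from the parametrisation data is where the real work is, and your write-up dispatches it with ``weight bookkeeping'' and a remark that scaling invariance forces the exponents. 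Scaling invariance is a necessary sanity check, not a proof; it cannot rule out a correction factor that is itself scaling-invariant but unbounded (say, one involving $\max_{i,j}\absv{A_{ij}}$ relative to $\Delta$).

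Second, the claim that the content ideal $\delta$ of $(g_0(s,t),g_1(s,t),g_2(s,t))$ for primitive $(s,t)$ always divides a fixed ideal that is ``up to bounded factors, a power of $\Delta(A)$'' is plausible but is not established by invoking a resultant without computing it; and the appeal to the local diagonalisation \cite[Cor.~I.3.4]{MH73}, ``exactly as in the proof of Lemma~\ref{lem:P},'' is misplaced, since Lemma~\ref{lem:P} concerns the discriminant of a family being squarefree, not the arithmetic of a parametrising system for a single conic. Third, the lattice-point count you invoke for the region $\absv{G(s,t)}\leq T_v$ is stated as depending only on the \emph{leading coefficient} of $G=g_0g_1g_2$. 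For a uniform estimate over all binary sextic forms $G$ one cannot get away with the leading coefficient alone: the shape of that region, and the number of lattice points in it, is governed by the full discriminant/root-separation data of $G$ (this is Mahler--Davenport territory), and $G$ here can degenerate (e.g.\ when the conic is tangent to a coordinate hyperplane, $G$ acquires a repeated root). So the volume bound as stated is not correct uniformly in $A$, and since the whole point of the lemma is uniformity, this is a genuine gap rather than a detail to fill in.

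In short: the skeleton is the right kind of argument, but the uniformity in $A$ (small-point bound, explicit control of the leading data of the parametrisation in terms of $\Delta$ and $\Delta_0$, and a lattice-point estimate robust to degeneration of $G$) is the entire content of \cite[Thm.~2.3]{mikey}, and your proposal asserts rather than proves each of those steps.
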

Here $\norm$ denotes the ideal norm and $N_{k/\QQ}:k \to \QQ$ the field norm.
Observe  that the implied constant in the above result does not
depend on the matrix $A$.

\subsection{Heights} \label{sec:Heights}
To implement our strategy, we need to make the height functions explicit.
Denote by $(\vy,\vx)$ the coordinates in $\mathbb{A}^{n+1} \times \mathbb{A}^3$
and by $(y;x) = (y_0:\dots:y_n; x_0:x_1:x_2)$ the corresponding coordinates
on $\FF_n(a_0,a_1,a_2)$.
Note that we prove Theorem \ref{thm:HDP} for completely general choices of
height function $H$ associated to $-K_X + \alpha F$. We do this by relating $H$ to a
``standard'' choice of height $H^*(y;x)$.

We let $m = [k:\QQ]$. For a place $v$ of $k$ we let $\locdegv = [k_v : \QQ_w]$ where $w$ is
the unique place of $\QQ$ below $v$. We similarly let $| \cdot |_v$ be the absolute value
on $k_v$ extending the
%CF
 standard
 absolute value $|\cdot|_w$ on $\QQ_w$.
%DL: Is this really the ``standard'' absolute value? People use different normalisations all the time. I would prefer to make this explicit as we had before.
%CF: I just added the word "standard" to the absolute values on $\QQ$, nothing
%was explicit before. And I think
%the real and p-adic absolute values on $\QQ_p$ can be called standard. How
%else would one define them? 
%DL: Sorry I had misread this. Looks fine!

\begin{lemma}\label{lem:standard_height}
	Let $A = n + 1 + \alpha - (a_0 + a_1 + a_2 + e)$ and define
	$H^*(y;x)=\prod_v H^*_v(y,x)^{\locdegv}$ with local factors
	\begin{equation} \label{def:Height}
	  H^*_v(y,x) = \max_{i}\{|y_i|_v\}^{A}\max_j\{ \max_i\{|y_i|_v\}^{a_j}|x_j|_v\}.
	\end{equation}
	Then $H^*$ is a height on $X$ associated to $-K_X + \alpha F$. In particular, there exist $c_1, c_2 > 0$ such that
	for all $(y;x) \in X(k)$ we have
	\begin{equation}
		c_1 H^*(y;x) \leq H(y;x) \leq c_2 H^*(y;x).
	\end{equation}
\end{lemma}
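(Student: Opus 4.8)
The plan is to verify that the proposed formula $H^*(y;x)=\prod_v H^*_v(y,x)^{\locdegv}$ genuinely defines a height function attached to the $\QQ$-divisor $-K_X + \alpha F$, by exhibiting it as the height coming from an explicit adelic metric on $\OO_X(-K_X + \alpha F)$; the final comparison $c_1 H^* \leq H \leq c_2 H^*$ is then automatic, since any two adelic metrics on the same line bundle differ by a bounded factor (and for $\QQ$-divisors one argues after passing to a suitable tensor power, cf.\ \cite[\S2]{CLT02}).

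First I would record, using Lemma \ref{lem:P}(1), that $-K_X = M + ((n+1) - a_0 - a_1 - a_2 - e)F$, so that
$$-K_X + \alpha F = M + \big((n+1) + \alpha - (a_0+a_1+a_2+e)\big)F = M + AF,$$
with $A$ exactly the quantity defined in the statement. Thus it suffices to produce the standard adelic metric on $\OO_X(M + AF)$ and identify its associated height with the stated local factors. The point is that $\FF_n(a_0,a_1,a_2)$ is the toric quotient of $(\mathbb{A}^{n+1}\setminus 0) \times (\mathbb{A}^3 \setminus 0)$ by the $\Gm^2$-action \eqref{eqn:Gm^2}, with Cox ring $k[y_0,\dots,y_n,x_0,x_1,x_2]$; in such a presentation the two classes $F$ and $M$ correspond to the two gradings, and one has canonical $v$-adic metrics on $\OO(F)$ and $\OO(M)$ given by
$$\|s_F(y;x)\|_v = \frac{|s_F(y;x)|_v}{\max_i |y_i|_v}, \qquad \|s_M(y;x)\|_v = \frac{|s_M(y;x)|_v}{\max_j\{\max_i|y_i|_v^{a_j}|x_j|_v\}},$$
where one checks, using bihomogeneity under \eqref{eqn:Gm^2}, that the right-hand sides are independent of the choice of representative $(\vy,\vx) \in (\mathbb{A}^{n+1}\setminus 0)\times(\mathbb{A}^3\setminus 0)$ of the point $(y;x)$. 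Indeed, scaling $y_i \mapsto \lambda y_i$, $x_j \mapsto \lambda^{-a_j}\mu x_j$ multiplies $\max_i|y_i|_v$ by $|\lambda|_v$ and multiplies $\max_j\{\max_i|y_i|_v^{a_j}|x_j|_v\}$ by $|\mu|_v$, which is precisely compensated by the transformation of the section. Taking the product over $v$ with exponents $\locdegv$ and using the product formula gives well-defined heights $H_F$ and $H_M$ on $\FF_n$, and the adelic condition (almost all local factors equal $1$ at integral points) is clear. Restricting to $X$ and forming $H^*(y;x) = H_F(y;x)^A \cdot H_M(y;x)$ then reproduces \eqref{def:Height}, since $\OO_X(M+AF) \cong \OO_X(M)\otimes\OO_X(F)^{\otimes A}$ and heights are multiplicative in the line bundle.

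The main obstacle is the book-keeping needed to make the preceding paragraph rigorous when $A$, $a_j$, or $\alpha$ are not integers: $-K_X + \alpha F$ is only a $\QQ$-divisor, so strictly speaking one must choose $N$ with $N\alpha \in \ZZ$ (all the $a_j$ and $e$ are already integral), construct the genuine adelic metric on the line bundle $\OO_X(N(-K_X+\alpha F)) = \OO_X(NM + NAF)$ as above, and then define $H^*$ as its $N$th root; one checks this root is independent of $N$ and that the exponents in \eqref{def:Height} are the $N$th roots of the integral ones, so no harm is done. One must also verify the genericity hypothesis that $X \hookrightarrow \FF_n$ is an embedding away from which $M$ restricts to a genuine (very ample on fibres) class, so that $\OO_X(M)$ really is the relative $\OO(1)$ and the above metric descends to $X$; this is exactly the content of the embedding $X \hookrightarrow \PP(\mathscr{E})$ discussed in \S\ref{sec:conics} together with Lemma \ref{lem:P}. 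Once the metric is in place, the final two-sided bound $c_1 H^* \leq H \leq c_2 H^*$ follows from the standard fact that for a fixed line bundle on a projective variety over a number field, heights attached to different adelic metrics are comparable up to multiplicative constants — with $c_1, c_2$ obtained as the product over the finitely many ``bad'' places of the local distortion factors, all but finitely many local factors agreeing on the nose.
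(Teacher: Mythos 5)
Your proposal is correct and follows essentially the same route as the paper: reduce via Lemma~\ref{lem:P}(1) to the identity $-K_X + \alpha F = M + AF$ in $\Pic X$, write down explicit heights $H_M$ and $H_F$, multiply to get $H^*=H_M\cdot H_F^{A}$, and then invoke the standard two-sided comparison between heights attached to the same line bundle. The paper phrases the construction of $H_M$ and $H_F$ directly through Weil's height machine (line bundles generated by the global sections $y_i^{a_j}x_j$, resp.\ $y_i$), whereas you phrase it through an explicit adelic metric on the toric quotient and spell out the $\Gm^2$-invariance and the $\QQ$-divisor book-keeping — these are presentational, not substantive, differences.
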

\begin{proof}
	The construction of the height follows from Weil's height machine as follows.
	Let $M$ be the relative hyperplane class. The line bundle $\OO_X(M)$ is generated by
	the global sections $y_i^{a_j}x_j$. In particular, a choice of height
	function is given by
	$$H_M(y,x) = \prod_v \max_j\{ \max_i\{|y_i|_v\}^{a_j}|x_j|_v\}^\locdegv.$$
	Similarly, the bundle $\OO_X(F)$ is generated by the global sections $y_i$. 
	By Lemma \ref{lem:P} we know that $-K_X = M + ((n+1) - a_0 - a_1 - a_2 - e)F$, 
	thus $-K_X + \alpha F = M + AF$ in $\Pic X$.
	Hence a choice of height is given by $H_M \cdot H_F^A$, which is exactly the height $H^*$.
	The second part of the lemma follows from standard properties of heights. 
\end{proof}

For $y \in \PP^n(k)$ we denote by $H(y)$ its usual $\OO(1)$-height, and for $\vy\in\OO_k^{n+1}$ we write
$H_\infty(\vy):=\prod_{v\mid\infty}\max_i\{\absv{y_i}\}^{m_v}$. 

\subsection{Fibration} \label{sec:fibration}
A hypersurface $X$ of bidegree $(e,2)$ in $\FF(a_0,a_1,a_2)$ is cut out by a bihomogeneous form
\begin{equation*}
  Q_{\vy}(\vx) = Q(\vy,\vx) = \sum_{0\leq i,j\leq 2}f_{i,j}(\vy)x_ix_j,
\end{equation*}
with $f_{i,j}(\vy)\in\OO_k[\vy]$ a form in $n+1$ variables of degree $\deg f_{i,j}=a_i+a_j+e$ over $\OO_k$. We 
assume that $f_{i,j}=f_{j,i}$ for all $i,j$. Recall that $\pi : X \to \PP^n$ is the natural projection $(y;x)\mapsto y$. Let 
$\Delta(\vy)=\det(f_{i,j}(\vy))_{i,j}$ be the discriminant of $\pi$, a form in
$n+1$ variables over $\OO_k$. For every $\vy\in\OO_k^{n+1}$, we let
$\Delta_0(\vy) \subseteq \OO_k$ 
be the ideal generated by all $(2\times 2)$-minors of the matrix $(f_{i,j}(\vy))_{i,j}$. 

Let $\vy \in k^{n+1}\smallsetminus\{\mathbf{0}\}$ with $\Delta(\vy) \neq 0$. Then the plane conic $C_{\vy}$ defined in $\PP^2$ by the ternary quadratic
form $Q_{\vy}$ is isomorphic to the fibre $X_y$ above $y\in\PP^n(k)$ 
via $\vx\mapsto (\vy;\vx)$. The restriction of $H$ to the smooth fibre $X_y$ is an anticanonical
height on $X_y$, which pulls back to an anticanonical height
$H_{y}(\vx)=H(\vy;\vx)$ on $C_{\vy}$.

\begin{lemma}\label{lem:minors_norm_bounded_general}
For any $\vy \in\OO_k^{n+1}$ with $\Delta(\vy) \neq 0$ we have $\norm \Delta_0(\vy)^3 \ll \abs{N_{k/\QQ}( \Delta(\vy))}^2$.
\end{lemma}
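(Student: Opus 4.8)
The plan is to work one prime at a time and bound valuations. Fix a prime ideal $\ppp$ of $\OO_k$ and let $v = v_\ppp$ be the corresponding valuation. Write $M(\vy) = (f_{i,j}(\vy))_{i,j}$ for the $3\times 3$ symmetric matrix over $\OO_k$, so that $\Delta(\vy) = \det M(\vy)$ and $\Delta_0(\vy)$ is the ideal generated by its $2\times 2$ minors. Set $\mu = v(\Delta_0(\vy))$, the minimum of the valuations of all $2\times 2$ minors, and $\delta = v(\Delta(\vy))$. The goal is the inequality $3\mu \leq 2\delta$ at every $\ppp$, which upon taking the product over all primes (and noting that both sides are supported on finitely many primes since $\Delta(\vy)\neq 0$) gives $\norm\Delta_0(\vy)^3 \mid (\text{something dividing}) \ N_{k/\QQ}(\Delta(\vy))^2$ up to units, hence the stated estimate (in fact with implied constant $1$, but $\ll$ suffices).

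The key step is the pointwise inequality $3\mu \leq 2\delta$ for a $3\times 3$ symmetric matrix over a DVR in residue characteristic $\neq 2$. First I would use that over the local ring $\OO_{k,\ppp}$, which is a DVR with $2$ a unit, one can diagonalise the quadratic form $Q_{\vy}$: by \cite[Cor.~I.3.4]{MH73} (exactly as invoked in the proof of Lemma \ref{lem:P}) there is $P \in \GL_3(\OO_{k,\ppp})$ with $P^t M(\vy) P = \mathrm{diag}(r_0, r_1, r_2)$. Since $P$ is invertible over the local ring, the ideal of $2\times 2$ minors is unchanged: $v$ of the minors of $\mathrm{diag}(r_0,r_1,r_2)$ equals $\mu$, and $v(r_0 r_1 r_2) = \delta$. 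Order the valuations so that $v(r_0) \leq v(r_1) \leq v(r_2)$. The $2\times 2$ minors of the diagonal matrix are $r_0 r_1$, $r_0 r_2$, $r_1 r_2$ (and zeros), so $\mu = v(r_0 r_1) = v(r_0) + v(r_1)$. Then
$$
3\mu = 3v(r_0) + 3v(r_1) \leq \bigl(v(r_0) + v(r_1) + v(r_2)\bigr) + 2v(r_1) + \bigl(v(r_1) - v(r_0)\bigr) \cdot 0 \ ;
$$
more cleanly: $2\delta - 3\mu = 2v(r_0) + 2v(r_1) + 2v(r_2) - 3v(r_0) - 3v(r_1) = 2v(r_2) - v(r_0) - v(r_1) \geq 2v(r_1) - v(r_0) - v(r_1) = v(r_1) - v(r_0) \geq 0$, using $v(r_2) \geq v(r_1)$ and then $v(r_1) \geq v(r_0)$. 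This gives $3\mu \leq 2\delta$ as required.

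I do not expect a serious obstacle here; the only point needing a little care is the passage from the local (DVR) statement to the global one. One must check that $\Delta_0(\vy)$ is indeed the ideal generated by minors (so that its $\ppp$-adic valuation is the minimum of the minor valuations, computed after the invertible change of variables) and that $\Delta(\vy) \neq 0$ guarantees $\delta < \infty$ for all $\ppp$ and $\delta = 0$ for all but finitely many $\ppp$, so that the product $\prod_\ppp \ppp^{3\mu} \mid \prod_\ppp \ppp^{2\delta}$ makes sense and yields $\norm\Delta_0(\vy)^3 \leq \abs{N_{k/\QQ}(\Delta(\vy))}^2$. Taking norms of the ideal inclusion then gives the claimed bound, with no dependence of the implied constant on $\vy$.
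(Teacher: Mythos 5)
Your approach is genuinely different from the paper's, which avoids local analysis entirely: it applies Cramer's rule $M(\vy)M^*(\vy)=\Delta(\vy)I$ to the adjugate matrix $M^*(\vy)$, takes determinants to get $\det M^*(\vy)=\Delta(\vy)^2$, and observes that $\det M^*(\vy)\in\Delta_0(\vy)^3$ since every entry of $M^*(\vy)$ is $\pm$ a $2\times 2$ minor of $M(\vy)$. This yields $\Delta_0(\vy)^3\mid\bigl(\Delta(\vy)^2\bigr)$ as ideals in one line, uniformly at every prime and with no hypothesis on the residue characteristic.

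Your local diagonalisation argument, by contrast, has a genuine gap at the primes $\ppp$ of $\OO_k$ lying over $2$. The diagonalisation \cite[Cor.~I.3.4]{MH73} requires $2$ to be a unit in the local ring. In the proof of Lemma~\ref{lem:P} that hypothesis is automatic because one localises $\PP^n_k$ at codimension-one points, whose residue fields have characteristic $0$; but here you are localising $\OO_k$ at its own maximal ideals, and for $\ppp\mid 2$ a ternary symmetric form over $\OO_{k,\ppp}$ need not be diagonalisable (already $\left(\begin{smallmatrix}0&1\\1&0\end{smallmatrix}\right)$ is not diagonalisable over $\ZZ_2$). You correctly flag ``residue characteristic $\neq 2$'' in your key step, but then apply its conclusion at every $\ppp$ without further argument. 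The omission cannot simply be absorbed into the implied constant, since $v_\ppp(\Delta(\vy))$ is unbounded as $\vy$ varies even at the finitely many primes over $2$. The gap is fixable --- at $\ppp\mid 2$ one can decompose the form into an orthogonal sum of unary and binary blocks and redo the valuation count in the unary-plus-binary case, which still gives $3\mu\leq 2\delta$ --- but as written the proof is incomplete; the adjugate identity sidesteps the issue entirely.
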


\begin{proof}
  Denote the $(2\times 2)$-minors of $M(\vy):=(f_{i,j}(\vy))_{i,j}$
  by $m_{i,j}(\vy)$, for $0\leq i,j\leq 2$. Let $M^*(\vy):=
  ((-1)^{i+j}m_{j,i}(\vy))_{i,j}$ be the adjugate matrix, then $\det M^*(\vy)\in\Delta_0(\vy)^3$. From Cramer's rule, we get $M(\vy)M^*(\vy) = \Delta(\vy)I$, where $I$ is the $(3\times 3)$-identity matrix. Taking determinants, we end
  up with $\det M^*(\vy) = \Delta(\vy)^2$. Thus, as ideals of $\OO_k$ we have
  $\Delta_0(\vy)^3\mid \Delta(\vy)^2$, and the lemma follows upon taking norms.
\end{proof}

%chris: old version with polynomials
% \begin{lemma}\label{lem:minors_norm_bounded_general}
% 	As ideals of $\OO_k[\vy]$ we have $ \Delta_0(\vy)^3 \mid \Delta(\vy)^2$. 
% 	In particular, for all $\vy \in\OO_k^{n+1}$ with $y_0\OO_k+ \dots + y_n\OO_k\in\mathcal{C}$
% 	we have $\norm \Delta_0(\vy)^3 / \norm \Delta(\vy)^2 \ll 1$.
% \end{lemma}

% \begin{proof}
%   We denote the $(2\times 2)$-minors of $M(\vy):=(f_{i,j}(\vy))_{i,j}$
%   by $m_{i,j}(\vy)$, for $0\leq i,j\leq 2$. Let $M^*(\vy):=
%   ((-1)^{i+j}m_{j,i}(\vy))_{i,j}$ be the adjugate matrix and let $f(\vy) \in \Delta_0(\vy)$.
%   From Cramer's rule, we get
%   \begin{equation*}
%     M(\vy)M^*(\vy) = \Delta(\vy)I,
%   \end{equation*}
%   where $I$ is the $(3\times 3)$ identity matrix. Taking determinants, we end
%   up with
%   \begin{equation*}
%     \det M^*(\vy) = \Delta(\vy)^2.
%   \end{equation*}
%   However $f(\vy)^3\mid\det M^*(\vy)$, and thus $f(\vy)^2\mid
%   \Delta(\vy)$, as required.
% \end{proof}

\begin{lemma}\label{lem:conics_bound}
  Let $\vy\in\OO_k^{n+1}$ with $\Delta(\vy) \neq 0$  and $y_0\OO_k+ \dots + y_n\OO_k\in\mathcal{C}$. Then
  \begin{equation*}
    N(C_{\vy},H_{\vy},B) \ll \tau_k(\Delta(\vy))\left(\frac{B \norm(\Delta_0(\vy))^{1/2}}{H_\infty(\vy)^{(a_0+a_1+a_2)/3+A}\abs{N_{k/\QQ}(\Delta(\vy))}^{1/3}}+1\right). 
  \end{equation*}
\end{lemma}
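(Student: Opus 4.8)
The plan is to deduce Lemma~\ref{lem:conics_bound} directly from the uniform conic bound of Lemma~\ref{lem:BSJ}, once the anticanonical height $H_{\vy}$ on $C_{\vy}$ has been rewritten in a form to which Lemma~\ref{lem:BSJ} applies. First I would fix $\vy \in \OO_k^{n+1}$ with $\Delta(\vy)\neq 0$ and $y_0\OO_k+\dots+y_n\OO_k \in \mathcal{C}$, and recall from Lemma~\ref{lem:standard_height} that we may as well count with respect to $H^*$ rather than $H$, since they differ by bounded multiplicative constants and the bound we are proving is insensitive to such constants. Restricting $H^*$ to the fibre, for $(\vy;\vx)$ with fixed $\vy$ we have
\begin{equation*}
  H^*_{\vy}(\vx) = \prod_v \Bigl( \max_i\{|y_i|_v\}^{A} \max_j\{\max_i\{|y_i|_v\}^{a_j}|x_j|_v\}\Bigr)^{\locdegv}
  = H(y)^{A} \prod_v \max_j\{\max_i\{|y_i|_v\}^{a_j}|x_j|_v\}^{\locdegv},
\end{equation*}
where $H(y)$ is the usual $\OO(1)$-height of $y \in \PP^n(k)$. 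For $\vy$ integral, the non-archimedean contribution to $H(y)$ is controlled by the ideal $y_0\OO_k+\dots+y_n\OO_k$, which lies in the finite set $\mathcal{C}$, so up to $O(1)$ we may replace $H(y)$ by $H_\infty(\vy) = \prod_{v\mid\infty}\max_i\{|y_i|_v\}^{\locdegv}$. Similarly, at non-archimedean $v$ the factor $\max_j\{\max_i\{|y_i|_v\}^{a_j}|x_j|_v\}$ is, up to the bounded contribution of the ideal generated by the $y_i$, comparable to $\max_j\{|x_j|_v\}$, so the product over finite $v$ is essentially the condition that $\vx$ be (a scalar multiple of) an integral primitive vector with ideal in $\mathcal{C}$.

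Next I would convert the bound $H^*_{\vy}(\vx) \leq B$ (up to constants) into box conditions at the archimedean places of the shape required by Lemma~\ref{lem:BSJ}. Writing $Y_v := \max_i\{|y_i|_v\}$, the archimedean part of $H^*_{\vy}$ is $H_\infty(\vy)^A \prod_{v\mid\infty}\max_j\{Y_v^{a_j}|x_j|_v\}^{\locdegv}$, and bounding this by $B$ forces, for each $j$ and each archimedean $v$,
\begin{equation*}
  |x_j|_v \ \leq\ B_{j,v} := \frac{C\,B^{1/\locdegv \cdot (\text{normalisation})}}{Y_v^{a_j}\, H_\infty(\vy)^{A/\locdegv\cdot(\dots)}}
\end{equation*}
for a suitable normalisation so that $\prod_{v\mid\infty}(B_{1,v}B_{2,v}B_{3,v})^{\locdegv}$ works out to $\bigl(B / (H_\infty(\vy)^{A}\prod_v Y_v^{a_0+a_1+a_2})\bigr) = B\,H_\infty(\vy)^{-(a_0+a_1+a_2)-A}$; here I am using $\prod_{v\mid\infty}Y_v^{\locdegv}=H_\infty(\vy)$. (Strictly the box decomposition of a single product bound into a product of individual archimedean bounds introduces a bounded factor and possibly a sum over $O(1)$ shapes, which is harmless.) Plugging $\vB_1,\vB_2,\vB_3$ with this product into Lemma~\ref{lem:BSJ}, applied to the matrix $A=(f_{i,j}(\vy))_{i,j}$ of the ternary form $Q_{\vy}$, and using $\Delta(A)=\Delta(\vy)$ and $\Delta_0(A)=\Delta_0(\vy)$, gives exactly
\begin{equation*}
  N(C_{\vy},H_{\vy},B) \ll \tau_k(\Delta(\vy))\left(\left(\frac{\norm(\Delta_0(\vy))^{3/2}\cdot B\,H_\infty(\vy)^{-(a_0+a_1+a_2)-A}}{|N_{k/\QQ}(\Delta(\vy))|}\right)^{1/3}+1\right),
\end{equation*}
which upon taking the cube root of the first term is precisely the claimed estimate, with the exponent $(a_0+a_1+a_2)/3+A$ on $H_\infty(\vy)$.

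The step I expect to be the main obstacle is the bookkeeping in the middle paragraph: matching normalisations between the place-by-place definition of $H^*$ (which uses local degrees $\locdegv$ and the product formula) and the archimedean box data $(\vB_1,\vB_2,\vB_3)$ that Lemma~\ref{lem:BSJ} expects, while simultaneously checking that (i) the non-archimedean places contribute only an $O(1)$ distortion because $y_0\OO_k+\dots+y_n\OO_k \in \mathcal{C}$ and, after clearing denominators in $\vx$, the ideal $x_0\OO_k+x_1\OO_k+x_2\OO_k$ may be taken in $\mathcal{C}$ as well, and (ii) the passage from a single multiplicative bound $\prod_v \max_j\{\cdots\}\leq B'$ to a product of coordinatewise archimedean bounds $|x_j|_v\leq B_{j,v}$ can be done losing only a constant factor and a sum over boundedly many configurations. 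Once this dictionary is set up carefully, the estimate drops out of Lemma~\ref{lem:BSJ} mechanically; note that Lemma~\ref{lem:minors_norm_bounded_general} is not needed here but will be used subsequently to control the ``$+1$'' term when summing over $\vy$.
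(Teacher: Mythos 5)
Your approach is the same as the paper's: restrict $H^*$ to the fibre, turn the single height bound into archimedean box conditions, and feed these into Lemma~\ref{lem:BSJ}. The arithmetic target is also right (up to a slip in the displayed normalisation: $\prod_{v\mid\infty}(B_{1,v}B_{2,v}B_{3,v})^{\locdegv}$ should equal $B^3\,H_\infty(\vy)^{-3A-(a_0+a_1+a_2)}$, not $B\,H_\infty(\vy)^{-A-(a_0+a_1+a_2)}$; the cube root then gives the claimed exponent).

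However, the step you flag as the ``main obstacle'' is exactly where your argument has a gap, and your proposed resolution --- that the passage from the product bound $\prod_{v\mid\infty}\max_j\{\max_i\{|y_i|_v\}^{a_j}|x_j|_v\}^{\locdegv}\ll B\,H_\infty(\vy)^{-A}$ to coordinatewise bounds $|x_j|_v\leq B_{j,v}$ costs only ``a constant factor and a sum over $O(1)$ shapes'' --- is not correct as stated. For a number field with infinite unit group, a representative $\vx$ with $x_0\OO_k+x_1\OO_k+x_2\OO_k\in\mathcal{C}$ is determined only up to multiplication by units, and the individual local factors $H_v^*(\vy,\vx)$ can be wildly unbalanced; decomposing dyadically in each archimedean place would cost a factor of order $(\log B)^{|\archplaces|-1}$, not $O(1)$, which would destroy the bound. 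The paper's fix is to invoke Dirichlet's unit theorem (as in \cite[\S 13.4]{Ser97}) to choose, for each $x\in C_{\vy}(k)$, the representative $\vx$ so that
\begin{equation*}
  H_v^*(\vy,\vx)\asymp H_\infty^*(\vy,\vx)^{1/m}\quad\text{for all }v\mid\infty,
\end{equation*}
after which $H_\infty^*(\vy,\vx)\ll B$ immediately gives $|x_j|_v\ll B^{1/m}/\max_i\{|y_i|_v\}^{A+a_j}$ with a single shape and a uniform constant. That balancing step is the essential ingredient your write-up leaves unaddressed; once it is supplied, the rest of your argument goes through and matches the paper's proof.
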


\begin{proof}
  Write $H_\infty^*(\vy,\vx) :=
  \prod_{v\mid\infty}H_v^*(\vy,\vx)^\locdegv$. Every rational point $x\in C_{\vy}(k)$ has a representative $\vx = (x_0,x_1,x_2)\in\OO_k^3$ with
  $Q_{\vy}(\vx)=0$, which satisfies moreover 
  \begin{align}
    &x_0\OO_k+x_1\OO_k+x_2\OO_k\in\mathcal{C},\quad and\label{eq:conic_rep_coprime}\\
    &H_v^*(\vy,\vx)\asymp
    H_\infty^*(\vy,\vx)^{1/\dg} \text{ for all }v\mid\infty.\label{eq:conic_rep_dirichlet}
  \end{align}
  To obtain \eqref{eq:conic_rep_dirichlet}, we used Dirichlet's unit theorem (cf.~\cite[\S13.4]{Ser97}). If $H(y;x)\leq B$, then the representative $\vx$ satisfies
  % Thus, it suffices to bound the number of such representatives $\vx$ that
  % satisfy the height bound
  % $H(y;x)\leq B$.
  % For these representatives, we have
  \begin{equation*}
    H_\infty^*(\vy,\vx)\ll H^*(y;x)\ll H(y;x)\leq B,
  \end{equation*}
  where the first estimate holds due to \eqref{eq:conic_rep_coprime} and the
  analogous assumption for $\vy$, and the second estimate is due to
  Lemma \ref{lem:standard_height}. With \eqref{eq:conic_rep_dirichlet}, this yields
  \begin{equation*}
    \absv{x_j}\leq\frac{H_v^*(\vy,\vx)}{\max_i\{\absv{y_i}\}^{A+a_j}} \ll \frac{B^{1/\dg}}{\max_i\{\absv{y_i}\}^{A+a_j}}, \quad j = 0,1,2.
  \end{equation*}
  With this observation, the desired bound now follows  from
  Lemma~\ref{lem:BSJ}.
\end{proof}

\begin{lemma} \label{lem:y_small}
	Let $(y;x) \in U(k)$ satisfy $H(y;x) \leq B$. Then
    $
	  H(y) \ll B^{1/(A+a_2)}.
    $
\end{lemma}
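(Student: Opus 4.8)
The plan is to run everything through the explicit ``standard'' height $H^*$ of Lemma~\ref{lem:standard_height} and the comparison $H\asymp H^*$, bounding $H^*(y;x)$ from below by $H(y)^{A+a_2}$. The one piece of geometry that is genuinely used is the hypothesis that $U$ does not meet $\{x_2=0\}$: this guarantees that a point of $U(k)$ has a representative $(\vy,\vx)\in k^{n+1}\times k^3$ with $\vy\neq\mathbf 0$, $\vx\neq\mathbf 0$ and, crucially, $x_2\in k^*$, which is exactly what allows the $x_2$-contribution to be removed via the product formula.

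Concretely, I would fix such a representative and, for each place $v$ of $k$, keep only the term $j=2$ in the inner maximum of \eqref{def:Height}:
\begin{equation*}
  H^*_v(y,x)\;\geq\;\max_i\{|y_i|_v\}^A\cdot\max_i\{|y_i|_v\}^{a_2}|x_2|_v\;=\;\max_i\{|y_i|_v\}^{A+a_2}\,|x_2|_v.
\end{equation*}
Raising to the power $\locdegv$ and multiplying over all places, the factors $|x_2|_v$ contribute $\prod_v|x_2|_v^{\locdegv}=1$ by the product formula (valid since $x_2\neq 0$, in the normalisation of absolute values fixed in \S\ref{sec:Heights}), while $\prod_v\max_i\{|y_i|_v\}^{\locdegv}=H(y)$ is the genuine $\OO(1)$-height. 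Hence $H^*(y;x)\geq H(y)^{A+a_2}$. Combining this with $H^*(y;x)\ll H(y;x)\leq B$ (Lemma~\ref{lem:standard_height}) gives $H(y)^{A+a_2}\ll B$, i.e.\ $H(y)\ll B^{1/(A+a_2)}$.

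The only loose end is to check that the exponent is positive so that taking $(A+a_2)$-th roots preserves the inequality. Since $A+a_2=n+1+\alpha-a_0-a_1-e$ and $\alpha>e+\tfrac23(a_0+a_1+a_2)$, one gets $A+a_2>n+1-\tfrac13 a_0-\tfrac13 a_1+\tfrac23 a_2\geq n+1>0$, using $a_0\leq a_1\leq a_2$. I do not expect any real obstacle here: the proof is a one-line estimate plus the product formula, and the only points that require care are (i) singling out the coordinate $x_2$ rather than $x_0$ or $x_1$, so that the hypothesis on $U$ is the relevant one (this is where the ordering $a_0\le a_1\le a_2$ enters, cf.\ Example~\ref{exa:Northcott}), and (ii) using the normalisation of \S\ref{sec:Heights} so that both $\prod_v|x_2|_v^{\locdegv}=1$ and $\prod_v\max_i\{|y_i|_v\}^{\locdegv}=H(y)$ hold on the nose.
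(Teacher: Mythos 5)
Your proof is correct and is exactly the argument the paper compresses into one line (``since $x_2\neq 0$ on $U$, this follows immediately from Lemma~\ref{lem:standard_height}''); you have simply spelled out the step of discarding the $j=0,1$ terms, invoking the product formula on $x_2\in k^*$, and checking the positivity of $A+a_2$.
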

\begin{proof}
	Since $x_2\neq 0$ on $U$, this follows immediately from Lemma \ref{lem:standard_height}.
\end{proof}

\begin{lemma}\label{lem:y_height_bound}
  Let $\vy\in \OO_k$ with $\Delta(\vy) \neq 0$ and $y_0\OO_k+ \dots + y_n\OO_k \in\mathcal{C}$. Let $\epsilon>0$. Then
  \begin{equation*}
    \frac{N(X_{y}\cap U,H, B)}{B}\ll_\epsilon
    \frac{1}{H(y)^{(a_0+a_1+a_2)/3+A-\epsilon}}
    + \frac{1}{H(y)^{A+a_2-\epsilon}}. 
  \end{equation*}
\end{lemma}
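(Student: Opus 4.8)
The plan is to combine the pointwise bound on each conic from Lemma \ref{lem:conics_bound} with the bound on $\norm \Delta_0(\vy)$ from Lemma \ref{lem:minors_norm_bounded_general} to control $N(X_y \cap U, H, B)$ in terms of $H(y)$ alone, treating the two terms in the bound of Lemma \ref{lem:conics_bound} separately. First I would fix a representative $\vy \in \OO_k^{n+1}$ of $y$ with $y_0\OO_k + \dots + y_n\OO_k \in \mathcal{C}$; since $\Delta(\vy) \neq 0$, the fibre $X_y$ is a smooth conic and $N(X_y \cap U, H, B) \leq N(C_{\vy}, H_{\vy}, B)$, so Lemma \ref{lem:conics_bound} applies. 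Dividing through by $B$, I get
\[
  \frac{N(X_y \cap U, H, B)}{B} \ll \tau_k(\Delta(\vy))\left(\frac{\norm(\Delta_0(\vy))^{1/2}}{H_\infty(\vy)^{(a_0+a_1+a_2)/3+A}\,\abs{N_{k/\QQ}(\Delta(\vy))}^{1/3}} + \frac{1}{B}\right).
\]

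For the first term, I would use Lemma \ref{lem:minors_norm_bounded_general}, which gives $\norm \Delta_0(\vy)^{1/2} \ll \abs{N_{k/\QQ}(\Delta(\vy))}^{1/3}$; these factors cancel the $\abs{N_{k/\QQ}(\Delta(\vy))}^{1/3}$ in the denominator exactly, leaving $H_\infty(\vy)^{-(a_0+a_1+a_2)/3 - A}$. Here I need that for a representative $\vy$ with coprimality in the class $\mathcal{C}$, one has $H_\infty(\vy) \asymp H(y)$ (up to constants depending only on $k$ and $\mathcal{C}$), which is standard. The divisor function $\tau_k(\Delta(\vy))$ is absorbed into the $\epsilon$: since $\Delta$ is a fixed form, $\abs{N_{k/\QQ}(\Delta(\vy))} \ll H(y)^{O(1)}$, hence $\tau_k(\Delta(\vy)) \ll_\epsilon H(y)^{\epsilon}$ by the standard bound on the divisor function. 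This yields the first summand $H(y)^{-(a_0+a_1+a_2)/3 - A + \epsilon}$. For the second term, the contribution is $\tau_k(\Delta(\vy))/B$; but I should not simply bound $1/B \ll 1$, since that would not produce the claimed shape. Instead, the point is that whenever $X_y \cap U$ contains a rational point of height $\leq B$, Lemma \ref{lem:y_small} forces $H(y) \ll B^{1/(A+a_2)}$, i.e.\ $B \gg H(y)^{A+a_2}$; so $\tau_k(\Delta(\vy))/B \ll_\epsilon H(y)^{\epsilon}/H(y)^{A+a_2} = H(y)^{-(A+a_2) + \epsilon}$. If $N(X_y \cap U, H, B) = 0$ there is nothing to prove. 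Adding the two contributions gives exactly the asserted bound.

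The only mildly delicate point is the bookkeeping of the absolute constants: one must check that the implied constants do not depend on $\vy$, which is guaranteed because the constant in Lemma \ref{lem:BSJ} (hence in Lemma \ref{lem:conics_bound}) is independent of the quadratic form, and the passage $H_\infty(\vy) \asymp H(y)$ for $\vy$ in a fixed set of class representatives introduces only a constant depending on $k$ and $\mathcal{C}$, which is allowed. The main obstacle, such as it is, is recognising that the harmless-looking ``$+1$'' in Lemma \ref{lem:conics_bound} must be handled using Lemma \ref{lem:y_small} rather than trivially, so that it contributes the second summand $H(y)^{-(A+a_2)+\epsilon}$ rather than an unusable constant; everything else is routine substitution and the standard estimate $\tau_k(\mathfrak{a}) \ll_\epsilon \norm(\mathfrak{a})^\epsilon$.
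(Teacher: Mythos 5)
Your proof is correct and takes essentially the same approach as the paper: handle the case $N(X_y \cap U, H, B) = 0$ trivially, otherwise invoke Lemma~\ref{lem:y_small} to get $B \gg H(y)^{A+a_2}$, then substitute into Lemma~\ref{lem:conics_bound}, cancel the discriminant norm against $\norm\Delta_0(\vy)^{1/2}$ via Lemma~\ref{lem:minors_norm_bounded_general}, and absorb $\tau_k(\Delta(\vy))$ into the $\epsilon$ using the standard divisor bound together with $|N_{k/\QQ}(\Delta(\vy))| \ll H_\infty(\vy)^{\deg\Delta}$. You correctly identify the key subtlety — that the ``$+1$'' in Lemma~\ref{lem:conics_bound} must be handled via Lemma~\ref{lem:y_small} rather than bounded trivially — which is precisely how the paper's proof proceeds.
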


\begin{proof}
  The estimate clearly holds when $N(X_{y}\cap U, H, B)=0$. Thus, let us assume that $(y;x)\in
  U(k)$ with $H(y;x)\leq B$. Then $H(y)\asymp H_\infty(y)$ and $B\gg H(y)^{A+a_2}$ by Lemma
  \ref{lem:y_small}. The lemma
  is now an immediate consequence of the bounds in Lemma
  \ref{lem:minors_norm_bounded_general} and Lemma \ref{lem:conics_bound},
  the isomorphism $C_{\vy}\cong X_{y}$, and the fact that, for all $\epsilon>0$,
  \begin{equation*}
    \tau_k(\Delta(\vy))\ll_\epsilon |N_{k/\QQ}(\Delta(\vy))|^{\epsilon}
    =\prod_{v\mid\infty}\absv{\Delta(\vy)}^{\epsilon\locdegv}\ll_\epsilon H_\infty(\vy)^{\epsilon\deg\Delta}. \qedhere
  \end{equation*}
\end{proof}

\begin{lemma}\label{lem:upper_bound_conv}
  There exists $\epsilon > 0$ such that the infinite series
  \begin{equation*}
    \sum_{y\in\PP^n(k)}\frac{1}{H(y)^{A+(a_0+a_1+a_2)/3-\epsilon}} \quad and \quad 
    \sum_{y\in\PP^n(k)}\frac{1}{H(y)^{A+a_2 - \varepsilon}}
  \end{equation*}
  are convergent.
\end{lemma}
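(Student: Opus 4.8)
The plan is to reduce the convergence of both series to a single, well-known fact: for a positive real number $s$, the height zeta function
\[
  Z_n(s) = \sum_{y \in \PP^n(k)} \frac{1}{H(y)^s}
\]
converges if and only if $s > n+1$. This is an immediate consequence of Schanuel's theorem (see \cite[\S2]{CLT02}), which gives $\#\{y \in \PP^n(k) : H(y) \leq T\} \asymp T^{n+1}$, so that partial summation shows $Z_n(s) < \infty$ precisely for $s > n+1$. Thus it suffices to exhibit an $\epsilon > 0$ for which both exponents exceed $n+1$, i.e.
\[
  A + \frac{a_0+a_1+a_2}{3} - \epsilon > n+1 \qquad \text{and} \qquad A + a_2 - \epsilon > n+1.
\]

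First I would recall from the definition in Lemma \ref{lem:standard_height} that $A = n+1+\alpha - (a_0+a_1+a_2+e)$, so that
\[
  A + \frac{a_0+a_1+a_2}{3} = n+1 + \alpha - e - \frac{2(a_0+a_1+a_2)}{3}.
\]
By the hypothesis of Theorem \ref{thm:HDP} we have $\alpha > e + \tfrac{2}{3}(a_0+a_1+a_2)$, so the right-hand side is strictly greater than $n+1$; hence the first exponent exceeds $n+1$ by a fixed positive amount. For the second exponent, I would compute
\[
  A + a_2 = n+1 + \alpha - e - (a_0+a_1+a_2) + a_2 = n+1 + \alpha - e - a_0 - a_1,
\]
and then use the normalisation $a_0 \leq a_1 \leq a_2$ from the statement of Theorem \ref{thm:HDP}: since $a_0 + a_1 \leq \tfrac{2}{3}(a_0+a_1+a_2)$ (because $a_0 + a_1 \le 2a_2$ forces $3(a_0+a_1) \le 2(a_0+a_1+a_2)$), we get $A + a_2 \geq A + \tfrac{1}{3}(a_0+a_1+a_2)$, which we have just shown exceeds $n+1$. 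So the second exponent is also bounded below by $n+1$ plus a fixed positive gap.

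Finally I would choose $\epsilon > 0$ smaller than both gaps — concretely, any $\epsilon$ with $0 < \epsilon < \min\{\alpha - e - \tfrac{2}{3}(a_0+a_1+a_2),\ \alpha - e - a_0 - a_1\}$, noting both quantities are positive — so that both exponents remain strictly above $n+1$, and conclude convergence by the Schanuel estimate above. There is no real obstacle here; the only point requiring a moment's care is the elementary inequality $3(a_0+a_1) \le 2(a_0+a_1+a_2)$, which uses the ordering $a_0 \le a_1 \le a_2$ to ensure the second series is no worse than the first.
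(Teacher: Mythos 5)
Your proposal is correct and matches the paper's proof essentially step for step: both reduce to Schanuel's estimate that $\sum_{y\in\PP^n(k)} H(y)^{-s}$ converges for $s>n+1$, both substitute $A = n+1+\alpha-(a_0+a_1+a_2+e)$ to see that the hypothesis $\alpha > e + \tfrac{2}{3}(a_0+a_1+a_2)$ makes the first exponent exceed $n+1$, and both use the ordering $a_0\leq a_1\leq a_2$ (equivalently $a_2 \geq (a_0+a_1+a_2)/3$) to bound $A+a_2$ from below by the first exponent. The only cosmetic difference is that you verify the inequality $a_0+a_1 \leq 2a_2$ after expanding $A+a_2$ directly, while the paper notes $A+a_2 \geq A+(a_0+a_1+a_2)/3$ immediately; the content is identical.
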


\begin{proof}
	Recalling the definition of $A$ given in Lemma \ref{lem:standard_height} and our assumptions
	on $\alpha$ in Theorem \ref{thm:HDP}, we have 
	\begin{align*}
		A + a_2 \geq A + (a_0+a_1+a_2)/3  = n+1 + \alpha - e - 2(a_0+a_1 +a_2)/3 &> n+1. 
	\end{align*}
  Choosing $\varepsilon > 0$ sufficiently small, we therefore arrange
  the exponents of $H(y)$ to be strictly larger than $n+1$. Hence
  the result follows from Schanuel's  theorem \cite{Sch79} and partial summation.
\end{proof}

Theorem \ref{thm:HDP} is an immediate consequence of the following
result, which gives more precise information about the leading constant $c_{U,H}$.
\begin{theorem} \label{thm:main}\
  Under the same assumptions of Theorem \ref{thm:HDP}, the following hold.
  \begin{enumerate}
  \item For $y\in\PP^n(k)$ with $\Delta(y) \neq 0$, we have
    \begin{equation*}
      N(X_{y}\cap U,H,B) = c_{y}B(1+o_{y}(1))\text{, as }B\to\infty,
    \end{equation*}
    where $c_{y}\geq 0$ is Peyre's constant for $X_{y}$ (or
    $c_{y}=0$ if $X_{y}\cap U=\emptyset$).
  \item The sum
    \begin{equation*}
      c_{U,H}:= \sum_{\substack{y\in\PP^n(k)\\ \Delta(y) \neq 0}}c_{y}
    \end{equation*}
    is convergent.
  \item We have
    \begin{equation*}
      N(U,H,B) = c_{U,H}B(1+o(1))\text{, as }B\to \infty.
    \end{equation*}
  \end{enumerate}
\end{theorem}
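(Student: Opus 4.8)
The plan is to deduce Theorem \ref{thm:main} from a fibration argument over $\PP^n$, splitting the count $N(U,H,B)$ as a sum over fibres $X_y$ for $y \in \PP^n(k)$ with $\Delta(y) \neq 0$ (the fibres over the discriminant locus are excluded since $U \subset \pi^{-1}(V)$), and then controlling the tail of this sum via dominated convergence. Concretely, for each $B$ one writes
\begin{equation*}
  \frac{N(U,H,B)}{B} = \sum_{\substack{y \in \PP^n(k)\\ \Delta(y) \neq 0}} \frac{N(X_y \cap U, H, B)}{B}.
\end{equation*}
The idea is to apply the dominated convergence theorem to the counting measure on $\{y \in \PP^n(k) : \Delta(y) \neq 0\}$, with the functions $f_B(y) = N(X_y \cap U, H, B)/B$.

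For Part (1): fix $y$ with $\Delta(y)\neq 0$, so $X_y$ is a smooth plane conic. If $X_y \cap U = \emptyset$ put $c_y = 0$ and there is nothing to prove. Otherwise $X_y(k) \neq \emptyset$, so $X_y \cong \PP^1_k$, and the restriction of $H$ to $X_y$ is (up to bounded factors) an anticanonical height on this conic, as noted in \S\ref{sec:fibration}. The asymptotic $N(X_y \cap U, H, B) = c_y B (1 + o_y(1))$ with $c_y$ equal to Peyre's constant for $X_y$ then follows from the known validity of the Batyrev--Manin--Peyre conjecture for $\PP^1$ (Schanuel's theorem \cite{Sch79}) together with the standard fact that removing a finite set of points (the intersection $X_y \setminus U$, if nonempty and not all of $X_y$) changes neither the leading term nor the constant. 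One should note that $c_y$ really is Peyre's constant for the chosen adelic metric on $-K_{X_y}$ induced from $H$; since $H$ differs from the standard height $H^*$ only by bounded local factors at finitely many places, this is a routine unwinding of definitions.

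For Parts (2) and (3) together: the key input is the uniform pointwise bound from Lemma \ref{lem:y_height_bound}, namely
\begin{equation*}
  \frac{N(X_y \cap U, H, B)}{B} \ll_\epsilon \frac{1}{H(y)^{(a_0+a_1+a_2)/3 + A - \epsilon}} + \frac{1}{H(y)^{A + a_2 - \epsilon}},
\end{equation*}
which holds for \emph{all} $B$ with an implied constant independent of $B$. By Lemma \ref{lem:upper_bound_conv} the right-hand side, summed over $y \in \PP^n(k)$, is a convergent series (for suitable small $\epsilon > 0$). This provides the integrable dominating function. Since by Part (1) we have $f_B(y) \to c_y$ pointwise as $B \to \infty$ for each fixed $y$, dominated convergence yields both that $c_{U,H} = \sum_y c_y$ is finite (Part (2), taking the limit of the partial sums, or simply bounding $c_y$ by the dominating function) and that
\begin{equation*}
  \lim_{B \to \infty} \frac{N(U,H,B)}{B} = \sum_{\substack{y \in \PP^n(k)\\ \Delta(y) \neq 0}} c_y = c_{U,H},
\end{equation*}
which is Part (3). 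Finally, $c_{U,H} > 0$ because $U(k) \neq \emptyset$ forces at least one fibre $X_y$ with $\Delta(y) \neq 0$ to contain a rational point (any point of $U(k)$ lies in such a fibre, as $U$ avoids the discriminant locus), and for that $y$ the conic is $\PP^1$ and $c_y > 0$.

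\textbf{Main obstacle.} The only genuinely delicate point is the interchange of limit and sum, i.e.\ verifying the hypotheses of dominated convergence cleanly: one needs the bound of Lemma \ref{lem:y_height_bound} to be truly uniform in $B$ (it is, by construction — the bound came from Lemma \ref{lem:conics_bound} via the Browning--Swarbrick Jones estimate whose implied constant is independent of the conic), and one needs the dominating series to converge, which is exactly Lemma \ref{lem:upper_bound_conv} and is where the hypothesis $\alpha > e + 2(a_0+a_1+a_2)/3$ is used. A secondary, more bookkeeping-type issue is checking that the per-fibre constant $c_y$ in Part (1) is precisely Peyre's constant for the metrized anticanonical bundle that $H$ induces on $X_y$ — this requires care because $H$ is an arbitrary height in the given class rather than the standard one $H^*$, but it follows from the fact that any two heights in the same class agree up to bounded factors at each place and are equal at all but finitely many, so the local densities defining Peyre's constant are unaffected. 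Everything else is an assembly of the lemmas already proved in the excerpt.
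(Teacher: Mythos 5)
Your proposal is correct and follows essentially the same route as the paper: Part (1) via Manin's conjecture for $\PP^1$ applied to the restricted anticanonical height, and Parts (2) and (3) via dominated convergence using the uniform bound of Lemma~\ref{lem:y_height_bound} and the convergence of the dominating series from Lemma~\ref{lem:upper_bound_conv}. The only minor imprecision is the remark that two heights in the same class ``are equal at all but finitely many places'' --- for general adelic metrics this need not hold, but it is also not needed: $H$ is itself a height attached to a genuine adelic metric on $\OO_X(-K_X+\alpha F)$, so its restriction to $X_y$ is an honest anticanonical adelic metric and Peyre's constant for it is what appears in the $\PP^1$ count.
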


\begin{proof}
If $X_{y}\cap U$ is not empty, then it differs from $X_{y}$
in only finitely many points. Since the restriction of $H$ to $X_{y}$ is an
anticanonical height, the asymptotical formula in \emph{(1)} is just Manin's conjecture for
$\PP^1$, proved in \cite[Cor.~6.2.18]{peyre}.

For \emph{(2)} and \emph{(3)}, we choose representatives $\vy$ for the points
$y\in\PP^n(k)$ as in Lemma~\ref{lem:y_height_bound}. Then
\begin{equation*}
  \sum_{\substack{y\in\PP^n(k)\\ \Delta(y) \neq 0}}c_{y} =
  \sum_{\substack{y\in\PP^n(k)\\ \Delta(y) \neq 0}}\lim_{B\to\infty}\frac{N(X_{y}\cap U,H,B)}{B}.
\end{equation*}
  The upper bound in Lemma \ref{lem:y_height_bound} is independent of
  $B$, and summable by Lemma~\ref{lem:upper_bound_conv}. Thus, the dominated convergence theorem yields
  \emph{(2)} and moreover allows us to exchange sum and limit, giving \emph{(3)}.
\end{proof}

This completes the proof of Theorem \ref{thm:HDP}. \qed

\subsection{Conic bundle surfaces}
In the case of conic bundle surfaces we can obtain a slightly stronger result than Theorem \ref{thm:HDP}.

\begin{theorem} \label{thm:conic_bundle_eqn}
  Let $a_0\leq a_1\leq a_2 \in \ZZ$ and let $X \subset
  \FF_1(a_0,a_1,a_2)$ be a smooth hypersurface of bidegree $(e,2)$ over a
  number field $k$, for some $e \in\ZZ$. Let $\pi: X \to \PP^1$
  be the natural projection and $F$ the pull-back of the hyperplane class on $\PP^1$.
  Let $U \subset X$ be an open dense
  subset with $U(k) \neq 0$ which does not meet any singular fibre of $\pi$ and which
  does not meet the hypersurface $x_2=0$. Let $H$ be a choice of height associated to
  $-K_{X} + \alpha F$ for some $\alpha > a_0 + a_1 + e$.  Then
  $$N(U,H,B) \sim c_{U,H} B, \quad \mbox{ as } B \to \infty,$$
  for some $c_{U,H} > 0$.
\end{theorem}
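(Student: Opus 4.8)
The plan is to follow exactly the strategy used for Theorem~\ref{thm:HDP}, exploiting the fact that $n=1$ gives us the stronger divisor function estimate available over $\PP^1$, and more importantly that on $\PP^1$ one has extra control over the discriminant ideal via the resultant. First I would record, as in Lemma~\ref{lem:standard_height}, the explicit standard height $H^*$ associated to $-K_X + \alpha F = M + AF$ with $A = 2 + \alpha - (a_0+a_1+a_2+e)$, so that $H \asymp H^*$ on $X(k)$, and note that on $U$ we have $x_2 \neq 0$, whence $H(y) \ll B^{1/(A+a_2)}$ for points of height $\leq B$ by the analogue of Lemma~\ref{lem:y_small}. The contribution of each smooth fibre $X_y \cap U$ is $c_y B(1+o_y(1))$ by Manin's conjecture for $\PP^1$ (Peyre), so the whole problem reduces, via dominated convergence, to finding a $B$-independent, summable pointwise upper bound for $N(X_y \cap U, H, B)/B$.

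The key point where $n=1$ helps is the following: the discriminant $\Delta(\vy)$ is now a binary form over $\OO_k$ of degree $2(a_0+a_1+a_2)+3e$, and it is squarefree by Lemma~\ref{lem:P}(2). Browning--Swarbrick Jones (Lemma~\ref{lem:BSJ}) gives
\begin{equation*}
  N(C_{\vy}, H_{\vy}, B) \ll \tau_k(\Delta(\vy))\left( \frac{B\, \norm(\Delta_0(\vy))^{1/2}}{H_\infty(\vy)^{(a_0+a_1+a_2)/3 + A} \abs{N_{k/\QQ}(\Delta(\vy))}^{1/3}} + 1\right),
\end{equation*}
exactly as in Lemma~\ref{lem:conics_bound}. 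For surfaces one can do better than Lemma~\ref{lem:minors_norm_bounded_general}: because $\Delta$ is squarefree as a binary form, at each prime $\ppp \mid \Delta(\vy)$ the matrix $(f_{i,j}(\vy))$ has a $(2\times 2)$-minor that is a unit for all but a bounded-in-total set of $\ppp$ (coming from the resultant of $\Delta$ with the minors, a fixed nonzero quantity since $\Delta$ squarefree means $\gcd$ of $\Delta$ and the minors is supported on finitely many primes independent of $\vy$, after clearing a fixed bad set). Hence $\norm \Delta_0(\vy) \ll 1$, i.e.\ is bounded independently of $\vy$, rather than merely $\norm \Delta_0(\vy)^3 \ll \abs{N_{k/\QQ}(\Delta(\vy))}^2$. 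Feeding this back, together with $\tau_k(\Delta(\vy)) \ll_\epsilon H_\infty(\vy)^{\epsilon}$ and $H(y) \asymp H_\infty(\vy)$ on $U$, yields
\begin{equation*}
  \frac{N(X_y \cap U, H, B)}{B} \ll_\epsilon \frac{1}{H(y)^{(a_0+a_1+a_2)/3 + A - \epsilon}} + \frac{1}{H(y)^{A+a_2-\epsilon}},
\end{equation*}
but now the weaker hypothesis $\alpha > a_0 + a_1 + e$ suffices: it gives $A + a_2 = 2 + \alpha - a_0 - a_1 - e > 2$ and also, using $a_2 \geq (a_0+a_1+a_2)/3$ is not needed — rather one checks directly $A + (a_0+a_1+a_2)/3 = 2 + \alpha - e - 2(a_0+a_1+a_2)/3$; here I would instead argue that the $\norm\Delta_0(\vy) \ll 1$ improvement means the relevant exponent is $A + a_2 > 2$ in both terms, which is exactly what is needed for convergence of $\sum_{y \in \PP^1(k)} H(y)^{-(2+\delta)}$ by Schanuel's theorem.

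With the summable dominating function in hand, dominated convergence gives convergence of $c_{U,H} = \sum_{y,\ \Delta(y)\neq 0} c_y$ and the asymptotic $N(U,H,B) = c_{U,H} B(1+o(1))$, with $c_{U,H} > 0$ since $U(k) \neq \emptyset$ forces at least one smooth fibre $X_y$ with $X_y \cap U \neq \emptyset$ and $X_y(k) \neq \emptyset$, whence $c_y > 0$ for that $y$. The main obstacle is the sharpened bound $\norm \Delta_0(\vy) \ll 1$: one must verify carefully that squarefreeness of the binary form $\Delta$ implies that the ideal generated by $\Delta(\vy)$ and the $2\times 2$ minors $m_{i,j}(\vy)$ has norm bounded uniformly in $\vy$ — this is where the geometry ($X$ regular, so that by the local diagonalisation argument in Lemma~\ref{lem:P} each $r_i$ has valuation $\leq 1$ and at most one $r_i$ is non-unit at each bad prime) translates into the arithmetic statement, and it is genuinely special to the surface case because for $n>1$ the locus $\Delta = 0$ can be singular and $\Delta_0(\vy)$ can grow.
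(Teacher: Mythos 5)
Your proposal correctly identifies the key improvement available in the surface case, namely that $\norm\Delta_0(\vy) \ll 1$ (this is the content of Lemma~\ref{lem:minors_norm_bounded} in the paper, which cites a variant of Broberg; your sketch of why it holds is essentially right, since smoothness of $X$ forces the three $2\times 2$ minors to be binary forms with no common factor, hence nonzero resultant). However, there is a genuine gap in how you then use this improvement. You substitute $\norm\Delta_0(\vy)^{1/2}\ll 1$ into the Browning--Swarbrick Jones bound and then discard the factor $\abs{N_{k/\QQ}(\Delta(\vy))}^{1/3}$ in the denominator, arriving at the dominating function
\begin{equation*}
\frac{1}{H(y)^{(a_0+a_1+a_2)/3 + A - \epsilon}} + \frac{1}{H(y)^{A+a_2-\epsilon}}.
\end{equation*}
The first exponent is $A + (a_0+a_1+a_2)/3 = 2 + \alpha - e - 2(a_0+a_1+a_2)/3$, which is exactly the quantity controlled by the hypothesis of Theorem~\ref{thm:HDP}; under the weaker hypothesis $\alpha > a_0+a_1+e$ it can be $\leq 2$ whenever $2a_2 > a_0+a_1$, and the series diverges. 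Your attempted repair, asserting that ``the $\norm\Delta_0(\vy)\ll 1$ improvement means the relevant exponent is $A+a_2>2$ in both terms,'' does not work: the improvement lives in the numerator of the Browning--Swarbrick Jones bound and does not change the power of $H_\infty(\vy)$ in the denominator, which remains $(a_0+a_1+a_2)/3 + A$.

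The missing idea is that one must \emph{retain} the saving $\abs{N_{k/\QQ}(\Delta(\vy))}^{-1/3}$ in the dominating function (this is precisely what $\norm\Delta_0(\vy)\ll 1$ buys you over Lemma~\ref{lem:minors_norm_bounded_general}, where the two factors cancel), and then prove convergence of the weighted series
\begin{equation*}
\sum_{\substack{y\in\PP^1(k)\\ \Delta(\vy)\neq 0}}\frac{1}{H(y)^{(a_0+a_1+a_2)/3 + A - \epsilon}\abs{N_{k/\QQ}(\Delta(\vy))}^{1/3}}
\end{equation*}
directly. The paper does this in Lemma~\ref{lem:upper_bound_2_conv} by invoking a second input from Browning--Swarbrick Jones (their Theorem~2.4), which gives $\sum_{H(y)\leq B,\,\Delta(\vy)\neq 0}\abs{N_{k/\QQ}(\Delta(\vy))}^{-1/3}\ll_\epsilon B^{2 - \deg\Delta/3 + \epsilon}$; one then uses $\deg\Delta = 2(a_0+a_1+a_2)+3e$ to see that the effective exponent is
\begin{equation*}
\frac{a_0+a_1+a_2}{3} + A + \frac{\deg\Delta}{3} = A + a_0+a_1+a_2+e > 2,
\end{equation*}
using $A+a_2 > 2$ and $a_0+a_1+e = \deg f_{0,1}\geq 0$. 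Without this second counting input exploiting the large degree of $\Delta$, the improvement $\norm\Delta_0(\vy)\ll 1$ alone does not get you below the threshold $\alpha > e + 2(a_0+a_1+a_2)/3$ of Theorem~\ref{thm:HDP}.
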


\subsubsection{Proof of Theorem \ref{thm:conic_bundle_eqn}}
The proof is a minor variant of the proof of Theorem~\ref{thm:HDP}. We achieve this by
performing a more careful analysis of the factors $\Delta_0(\vy)$ and $\Delta(\vy)$.
We keep the notation from the proof of Theorem~\ref{thm:HDP}.
The following is well-known,
and follows from a minor variant of the proof of \cite[Lemma 7]{broberg_cubic}.

\begin{lemma}\label{lem:minors_norm_bounded}
  Let $(y_0,y_1)\in\OO_k^{2}$ with $y_0\OO_k+y_1\OO_k\in\mathcal{C}$. Then $\norm
  \Delta_0(y_0,y_1)\ll 1$.
\end{lemma}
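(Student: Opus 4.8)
The plan is to bound $\norm\Delta_0(y_0,y_1)$ using the fact that, for a conic bundle \emph{surface} (i.e.\ $n=1$), the base $\PP^1$ has dimension $1$, so the discriminant locus is a finite set of points and the geometry of the $2\times 2$-minors is much more rigid than in the higher-dimensional case. Recall that $\Delta_0(y_0,y_1)$ is the ideal of $\OO_k$ generated by the six minors $m_{i,j}(y_0,y_1)$, each a binary form in $y_0,y_1$ of degree $a_i+a_j+2e-(\text{something})$; more precisely, from the degree matrix \eqref{eqn:degree_matrix} the minor omitting row/column $\ell$ has degree $(a_i+a_j) + (a_i+a_k) + \dots$, and in any case each $m_{i,j}$ is a binary form over $\OO_k$ of some fixed degree $d_{i,j}$ depending only on $a_0,a_1,a_2,e$. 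The point is that there are only finitely many such forms, their coefficients lie in a fixed finitely generated $\OO_k$-module, and so there is a \emph{fixed} nonzero ideal $\mathfrak{c}$ (depending only on $X$) with the following property: writing $g(y_0,y_1)=\gcd$ of the values is not quite the right formulation — instead I would argue as follows.

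First I would invoke the structure behind \cite[Lemma 7]{broberg_cubic}: for a finite collection of binary forms $g_1,\dots,g_r \in \OO_k[y_0,y_1]$ with no common root in $\PP^1(\overline{k})$, there is a nonzero constant $c = c(g_1,\dots,g_r) \in \OO_k$ such that for every primitive $(y_0,y_1)\in\OO_k^2$ (meaning $y_0\OO_k+y_1\OO_k$ coprime, or more generally lying in a fixed class $\mathfrak{r}\in\mathcal{C}$) one has $c \in g_1(y_0,y_1)\OO_k + \dots + g_r(y_0,y_1)\OO_k$. This is the classical resultant-type argument: the $g_i$ generate an ideal in $\OO_k[y_0,y_1]$ containing a power of the irrelevant ideal $(y_0,y_1)$ up to bounded denominators, so evaluating at a primitive point forces the ideal $\sum g_i(y_0,y_1)\OO_k$ to contain a fixed nonzero element. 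Applying this with $\{g_i\}$ the set of $2\times 2$-minors $m_{i,j}(y_0,y_1)$ — which have no common zero in $\PP^1(\overline k)$ precisely because $X$ is a smooth conic bundle, so the fibre over every point of $\PP^1$ has rank at least $2$ (the matrix $(f_{i,j}(\vy))$ never drops to rank $\leq 1$) — one concludes that $\Delta_0(y_0,y_1) \supseteq c\OO_k$ for all primitive $\vy$, hence $\norm\Delta_0(y_0,y_1) \mid \norm(c\OO_k) = |N_{k/\QQ}(c)|$, which is a bounded quantity. Accounting for the finitely many ideal classes in $\mathcal{C}$ (one fixed $c$ per class, then take the one with the largest norm) gives $\norm\Delta_0(y_0,y_1)\ll 1$ uniformly, as claimed.

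The key steps, in order: (i) write down the $2\times 2$-minors $m_{i,j}(y_0,y_1)$ explicitly as binary forms over $\OO_k$ of the degrees dictated by \eqref{eqn:degree_matrix}; (ii) verify that the smoothness of $X$ forces the $m_{i,j}$ to have no common zero over $\overline k$ — the fibre matrix has rank $\geq 2$ everywhere on $\PP^1$, since a rank-$\leq 1$ fibre would be a double line, which is a non-reduced conic and would make $X$ singular along it (alternatively this is exactly the squarefree-discriminant computation in the proof of Lemma~\ref{lem:P}, localised at a point of $\PP^1$: at a discriminant point exactly one of the diagonalised $r_i$ has valuation $1$, so two minors remain units); (iii) run the resultant/Nullstellensatz argument over $\OO_k$ to produce, for each ideal class $\mathfrak{r}\in\mathcal{C}$, a nonzero $c_{\mathfrak{r}}\in\OO_k$ lying in $\sum_{i,j} m_{i,j}(\vy)\OO_k$ for every $\vy$ with $y_0\OO_k+y_1\OO_k=\mathfrak{r}$; (iv) conclude $\norm\Delta_0(\vy)\leq \max_{\mathfrak{r}}|N_{k/\QQ}(c_{\mathfrak{r}})| \ll 1$.

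The main obstacle is step (iii): making the resultant argument work cleanly over the ring of integers $\OO_k$ rather than over a field, and with the ``primitivity up to ideal class'' bookkeeping that the rest of the paper uses. Over $\overline k$ the statement is just the projective Nullstellensatz; the arithmetic refinement — that one can clear denominators uniformly and that evaluation at any primitive integral point of $\PP^1$ captures a fixed nonzero element of the ideal — is precisely the content of Broberg's lemma, so I would simply cite and lightly adapt \cite[Lemma 7]{broberg_cubic} rather than reprove it. A secondary (but genuinely necessary) check is step (ii): one must be certain that smoothness of the total space $X$, not merely of the generic fibre, guarantees no fibre has corank $2$; this is exactly where the hypothesis that $X$ is a smooth hypersurface in $\FF_1(a_0,a_1,a_2)$ is used, and it follows from the local diagonalisation already carried out in the proof of Lemma~\ref{lem:P}.
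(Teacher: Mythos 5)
Your proposal is correct and follows the same route as the paper, which simply remarks that the lemma ``follows from a minor variant of the proof of'' Broberg's Lemma~7 without spelling out any details. The one substantive ingredient you supply beyond the citation — that smoothness of $X$ forces the $2\times 2$-minors $m_{i,j}(y_0,y_1)$, viewed as binary forms, to have no common zero in $\PP^1(\overline{k})$ (either via the local diagonalization carried out in the proof of Lemma~\ref{lem:P}, or via the observation that a rank-$\leq 1$ fibre is a double line along which $X$ would acquire a singular point) — is exactly the hypothesis Broberg's resultant argument requires, and the remaining bookkeeping with ideal classes is standard.
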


Note that Lemma \ref{lem:minors_norm_bounded} is specific to the case $n=1$; 
the bound $\norm  \Delta_0(y_0,\dots,y_n)\ll 1$ need not hold in general if $n > 1$.

\begin{lemma}\label{lem:s_t_height_bound}
  Let $y_0,y_1\in \OO_k$ with $y_0\OO_k+y_1\OO_k\in\mathcal{C}$, such that the fibre $X_{y}$
  is smooth. Let $\epsilon>0$. Then
  \begin{equation*}
    \frac{N(X_{y}\cap U,H, B)}{B}\ll_\epsilon\left(\frac{1}{H(y)^{(a_0 + a_1+a_2)/3+A-\epsilon}\abs{N_{k/\QQ}(\Delta(\vy))}^{1/3}}
    + \frac{1}{H(y)^{A+a_2-\epsilon}}\right). 
  \end{equation*}
\end{lemma}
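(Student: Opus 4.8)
The plan is to follow the proof of Lemma \ref{lem:y_height_bound} verbatim, but feed in the sharper input available when $n=1$. Recall that in the higher-dimensional setting we combined Lemma \ref{lem:conics_bound} with the bound $\norm \Delta_0(\vy)^3 \ll \abs{N_{k/\QQ}(\Delta(\vy))}^2$ from Lemma \ref{lem:minors_norm_bounded_general}, which produced the exponent $(a_0+a_1+a_2)/3 + A$ but lost the factor $\abs{N_{k/\QQ}(\Delta(\vy))}^{-1/3}$ entirely; this loss is exactly what Lemma \ref{lem:s_t_height_bound} reclaims. So first I would invoke Lemma \ref{lem:minors_norm_bounded} (valid precisely because $n=1$) to get $\norm \Delta_0(y_0,y_1) \ll 1$ for any representative $\vy = (y_0,y_1)$ with $y_0\OO_k + y_1\OO_k \in \mathcal{C}$.

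Next I would substitute this into Lemma \ref{lem:conics_bound}. That lemma gives
\[
  N(C_{\vy},H_{\vy},B) \ll \tau_k(\Delta(\vy))\left(\frac{B\,\norm(\Delta_0(\vy))^{1/2}}{H_\infty(\vy)^{(a_0+a_1+a_2)/3+A}\abs{N_{k/\QQ}(\Delta(\vy))}^{1/3}}+1\right),
\]
and with $\norm \Delta_0(\vy) \ll 1$ the first term becomes $\ll B\, H_\infty(\vy)^{-(a_0+a_1+a_2)/3-A}\abs{N_{k/\QQ}(\Delta(\vy))}^{-1/3}\tau_k(\Delta(\vy))$. As in Lemma \ref{lem:y_height_bound}, I would then choose the representative $\vy$ so that $H(y) \asymp H_\infty(\vy)$, use Lemma \ref{lem:y_small} to get $B \gg H(y)^{A+a_2}$ (so the constant term "$+1$" in Lemma \ref{lem:conics_bound} contributes $\ll B\,H(y)^{-(A+a_2)}$ after dividing by $B$), and absorb the divisor function via $\tau_k(\Delta(\vy)) \ll_\epsilon \abs{N_{k/\QQ}(\Delta(\vy))}^\epsilon \ll_\epsilon H_\infty(\vy)^{\epsilon\deg\Delta}$. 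Dividing through by $B$ and using the isomorphism $C_{\vy} \cong X_y$ then yields
\[
  \frac{N(X_y \cap U, H, B)}{B} \ll_\epsilon \frac{1}{H(y)^{(a_0+a_1+a_2)/3+A-\epsilon}\abs{N_{k/\QQ}(\Delta(\vy))}^{1/3}} + \frac{1}{H(y)^{A+a_2-\epsilon}},
\]
which is the claimed bound. (Here one slightly abuses notation by keeping $\Delta(\vy)$ in the final display even though $H(y)$ refers to the point: the representative $\vy$ is fixed by the choice above, so this is harmless, exactly as in Lemma \ref{lem:y_height_bound}.)

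I do not expect a genuine obstacle here; the lemma is deliberately a "minor variant." The only point requiring any care is making sure the extra factor $\abs{N_{k/\QQ}(\Delta(\vy))}^{-1/3}$ is carried through honestly rather than discarded, and that the $\epsilon$-absorption of $\tau_k(\Delta(\vy))$ is done against $\abs{N_{k/\QQ}(\Delta(\vy))}$ (it is harmless to enlarge the exponent of $H(y)$ downward by $\epsilon$ but one should not spend the $\abs{N_{k/\QQ}(\Delta(\vy))}^{-1/3}$ saving to do so — there is no need, since $\tau_k \ll_\epsilon |N_{k/\QQ}(\Delta(\vy))|^\epsilon$ and $|N_{k/\QQ}(\Delta(\vy))|^\epsilon \ll_\epsilon H_\infty(\vy)^{\epsilon \deg \Delta}$ handle it separately). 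This sharper bound is what later lets one push the convergence argument down to the weaker hypothesis $\alpha > a_0 + a_1 + e$ in Theorem \ref{thm:conic_bundle_eqn}, because summing $\abs{N_{k/\QQ}(\Delta(\vy))}^{-1/3}$ over $y$ with $\Delta(y)\neq 0$ gains an extra bit of decay beyond what $H(y)^{-(a_0+a_1+a_2)/3-A+\epsilon}$ alone provides.
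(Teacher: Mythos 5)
Your proposal is correct and takes exactly the route the paper intends: the paper's proof simply cites Lemma~\ref{lem:minors_norm_bounded} and declares the rest a minor modification of Lemma~\ref{lem:y_height_bound}, which is precisely what you carry out, substituting $\norm\Delta_0(\vy)\ll 1$ into Lemma~\ref{lem:conics_bound} so as to retain the $\abs{N_{k/\QQ}(\Delta(\vy))}^{-1/3}$ factor. Your remark about absorbing $\tau_k(\Delta(\vy))$ into the $H(y)$-exponent rather than into $\abs{N_{k/\QQ}(\Delta(\vy))}^{-1/3}$ is the right bookkeeping and matches what Lemma~\ref{lem:y_height_bound} already does.
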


\begin{proof}
	In light of Lemma \ref{lem:minors_norm_bounded}, the proof is a minor modification of the proof of Lemma~\ref{lem:y_height_bound}.
%  The estimate clearly holds when $N_{H}(X_{(y_0:y_1)}\cap U, B)=0$. Thus, let us assume that $\vx=(s:t;x_0:x_1:x_2)\in
%  U(k)$ with $H(\vx)\leq B$. Then $H(y_0:y_1)\asymp H_\infty(y_0,y_1)$ and $B\gg H(y_0:y_1)^{A+a_2}$ by Lemma
%  \ref{lem:s_t_height_bound}. The lemma
%  is now an immediate consequence of the bound in Lemma \ref{lem:conics_bound},
%  the isomorphism $C_{(y_0,y_1)}\cong X_{(y_0:y_1)}$, and the fact that, for all $\epsilon>0$,
%  \begin{equation*}
%    \tau_k(\Delta(y_0,y_1))\ll_\epsilon N_{k/\QQ}(\Delta(y_0,y_1))^{\epsilon}=\prod_{v\mid\infty}\absv{\Delta(y_0,y_1)}^{\epsilon\locdegv}\ll_\epsilon H_%\infty(y_0,y_1)^{\epsilon\deg\Delta}. \qedhere
%  \end{equation*}
\end{proof}

\begin{lemma}\label{lem:upper_bound_2_conv}
  For each $y\in\PP^1(k)$, we choose a fixed representative $\vy=(y_0,y_1)\in\OO_k^2$
  with $y_0\OO_k+y_1\OO_k\in\mathcal{C}$. Then, for small enough $\epsilon$, the infinite series
  \begin{equation*}
    \sum_{\substack{y\in\PP^1(k)\\\Delta(\mathbf{y})\neq 0}}\frac{1}{H(y)^{(a_0 + a_1+a_2)/3+A-\epsilon}\abs{N_{k/\QQ}(\Delta(\mathbf{y}))}^{1/3}}
    \quad and \quad 
    \sum_{y\in\PP^1(k)}\frac{1}{H(y)^{A+a_2 - \varepsilon}}
  \end{equation*}
  converge.
\end{lemma}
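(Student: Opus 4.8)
The plan is to reduce both series to absolutely convergent Dirichlet-type series over $\PP^1(k)$, using Schanuel's theorem together with the nonvanishing of the discriminant factor $N_{k/\QQ}(\Delta(\vy))$ in the first series. For the second series the argument is identical to the relevant half of Lemma \ref{lem:upper_bound_conv}: since $A + a_2 \geq A + (a_0+a_1+a_2)/3 = 2 + \alpha - e - 2(a_0+a_1+a_2)/3 > 2$ (using $n = 1$ and the hypothesis $\alpha > a_0 + a_1 + e$, which forces $3\alpha > 2(a_0+a_1+a_2) + 3e$ provided $a_2 \geq a_1 \geq a_0$, so $\alpha > e + 2(a_0+a_1+a_2)/3$), one picks $\varepsilon$ small enough that the exponent exceeds $2$, and convergence follows from Schanuel \cite{Sch79} and partial summation exactly as before.

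For the first series, the key new ingredient is that $\Delta$ is a squarefree binary form of degree $D := 2(a_0+a_1+a_2)+3e$ by Lemma \ref{lem:P}(2). For $\vy = (y_0,y_1) \in \OO_k^2$ with $y_0\OO_k + y_1\OO_k \in \mathcal{C}$ we have $H(y) \asymp H_\infty(\vy)$, and the trivial bound gives $|N_{k/\QQ}(\Delta(\vy))| \ll H_\infty(\vy)^{D}$, but this goes the wrong way. Instead I would split the sum according to the size of $|N_{k/\QQ}(\Delta(\vy))|$. For a parameter $T \geq 1$, the contribution of those $y$ with $|N_{k/\QQ}(\Delta(\vy))| \leq T$ and $H(y) \leq R$ is bounded: since $\Delta$ is squarefree of degree $D$, a standard estimate (e.g.\ via the theory of values of binary forms, or directly by factoring through the finitely many points of the zero locus of $\Delta$ on $\PP^1$) shows
\begin{equation*}
  \#\{y \in \PP^1(k) : H(y) \leq R,\ |N_{k/\QQ}(\Delta(\vy))| \leq T\} \ll_\varepsilon R^{1+\varepsilon} T^{\,m/(D-?)},
\end{equation*}
but rather than chase the optimal exponent I would instead directly insert the lower bound into a dyadic decomposition: write the first series as $\sum_{y} H(y)^{-(A + (a_0+a_1+a_2)/3 - \epsilon)} |N_{k/\QQ}(\Delta(\vy))|^{-1/3}$ and note that on the region $H(y) \in [R, 2R)$ one has, for all but $O_\varepsilon(R^\varepsilon)$ exceptional $y$ (those with $\Delta(\vy)$ unusually small), the lower bound $|N_{k/\QQ}(\Delta(\vy))| \gg R^{-\varepsilon}$, so the $|N_{k/\QQ}(\Delta)|^{-1/3}$ factor costs at most $R^{\varepsilon/3}$; the exceptional set is handled by the trivial bound $|N_{k/\QQ}(\Delta(\vy))| \geq 1$ together with a count of near-zeros of $\Delta$, which is $O_\varepsilon(R^\varepsilon)$ by squarefreeness. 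Summing the dyadic blocks then reduces to the convergence of $\sum_R R^{1 + \varepsilon} R^{-(A + (a_0+a_1+a_2)/3 - \epsilon)} = \sum_R R^{1 - (A + (a_0+a_1+a_2)/3) + O(\epsilon)}$, which converges because $A + (a_0+a_1+a_2)/3 > 2$ as computed above, upon taking $\epsilon$ small.

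The main obstacle is controlling the $|N_{k/\QQ}(\Delta(\vy))|^{-1/3}$ factor: one must rule out that too many $y$ of bounded height have $\Delta(\vy)$ very close to $0$. The cleanest way is to invoke squarefreeness of $\Delta$ to say that near each of its (finitely many) roots $\xi \in \PP^1(\Kbar)$ the form $\Delta$ vanishes to order exactly one, so $|N_{k/\QQ}(\Delta(\vy))| \gg H(y)^{D} \cdot \prod_{v} \min(1, \text{dist}_v(y,\xi))$ in an appropriate sense; points within $v$-adic distance $H(y)^{-1}$ of a root contribute $O_\varepsilon(H(y)^{\varepsilon})$ in number at each scale, and for all other points the discriminant factor is $\gg H(y)^{-\varepsilon}$. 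I expect this lemma's proof in the paper to be a short one that simply says the argument is "a minor modification" of Lemma \ref{lem:upper_bound_conv} combined with Lemma \ref{lem:minors_norm_bounded}; indeed once $\norm \Delta_0(\vy) \ll 1$ is in hand (Lemma \ref{lem:minors_norm_bounded}), the extra $|N_{k/\QQ}(\Delta(\vy))|^{-1/3}$ in Lemma \ref{lem:s_t_height_bound} is a \emph{help}, not a hindrance, so one may simply discard it by the bound $|N_{k/\QQ}(\Delta(\vy))| \geq 1$ and fall back on the second series' convergence — making the whole thing immediate from Lemma \ref{lem:upper_bound_conv} with $n=1$.
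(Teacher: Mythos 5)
There is a genuine gap, and it is at the very point where the lemma departs from Lemma \ref{lem:upper_bound_conv}. Your final conclusion — that one may discard the factor $\abs{N_{k/\QQ}(\Delta(\vy))}^{-1/3}$ via $\abs{N_{k/\QQ}(\Delta(\vy))}\geq 1$ and invoke Lemma \ref{lem:upper_bound_conv} with $n=1$ — does not work, because the exponent $(a_0+a_1+a_2)/3 + A - \epsilon$ need \emph{not} exceed $2$ under the hypothesis $\alpha > a_0+a_1+e$ of Theorem \ref{thm:conic_bundle_eqn}. Your intermediate claim that $\alpha > a_0+a_1+e$ ``forces'' $\alpha > e + 2(a_0+a_1+a_2)/3$ ``provided $a_2\geq a_1\geq a_0$'' has the implication backwards: the ordering $a_0\leq a_1\leq a_2$ gives $a_0+a_1\leq 2a_2$, hence $a_0+a_1+e \leq e + 2(a_0+a_1+a_2)/3$, so the hypothesis of Theorem \ref{thm:conic_bundle_eqn} is \emph{strictly weaker} (for non-constant $a_i$) than that of Theorem \ref{thm:HDP}. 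That wider admissible range of $\alpha$ is the whole point of the surface case, and it means $\sum_y H(y)^{-((a_0+a_1+a_2)/3 + A - \epsilon)}$ can genuinely diverge. Your direct computation $A+a_2 = 2+\alpha-(a_0+a_1+e)>2$ is fine for the second series; your chain $A+a_2\geq A+(a_0+a_1+a_2)/3>2$ is not, for the same reason.

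The discriminant factor therefore cannot be thrown away; it must be made to contribute a saving. This is what the paper's proof does: after normalising the representatives via Dirichlet's unit theorem, it invokes \cite[Theorem 2.4]{mikey} and a dyadic splitting to obtain
\begin{equation*}
  \sum_{\substack{y\in\PP^1(k),\ H(y)\leq B\\ \Delta(\vy)\neq 0}} \frac{1}{\abs{N_{k/\QQ}(\Delta(\vy))}^{1/3}} \ll_\epsilon B^{2-\deg\Delta/3+\epsilon},
\end{equation*}
and then partial summation reduces convergence of the first series to the inequality $(a_0+a_1+a_2)/3 + A + \deg\Delta/3 > 2$, which by Lemma \ref{lem:P} equals $A + a_0+a_1+a_2+e = (A+a_2) + (a_0+a_1+e) > 2$. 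Here the $\deg\Delta/3$ saving from the average size of $\Delta$ is exactly what bridges the gap between the exponent you have and the exponent $2$ you need. Your middle paragraphs (dyadic splitting, near-zero counting, squarefreeness) are gesturing in the right direction, but they remain qualitative and are ultimately abandoned; the concrete input that makes them work is the quantitative moment estimate from \cite[Theorem 2.4]{mikey}, which your proposal never cites.
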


\begin{proof}
  The second series converges as $A + a_2 = 2 + \alpha - (a_0 + a_1 + e) > 2$,
  by the choice of $\alpha$.
  
  As for the first series, the summand is invariant under multiplication of $(y_0,y_1)$ by elements
  of $\OO_k^\times$. % As a consequence of Dirichlet's unit theorem (see ,
  %DL: changed this
  By a standard argument using Dirichlet's unit theorem
  (see e.g.~\cite[\S13.4]{Ser97}), we may thus
  assume that the fixed representative $(y_0,y_1)$ for each point in $\PP^1(k)$ satisfies
  \begin{equation*}
    \max\{\absv{y_0},\absv{y_1}\}^{\locdegv}\asymp
    H_\infty(\vy)^{1/|\archplaces|}\asymp H(y)^{1/|\archplaces|}\text{ for
      all }v\mid\infty.
  \end{equation*}
  Then it follows from \cite[Theorem 2.4]{mikey} and a dyadic splitting of the sum that
  \begin{equation*}
    \sum_{\substack{y\in\PP^1(k)\\H(y)\leq B\\\Delta(\vy)\neq
        0}}\frac{1}{\abs{N_{k/\QQ}(\Delta(\vy))}^{1/3}}\ll_{\epsilon} B^{2-\frac{\deg\Delta}{3}+\epsilon}.
  \end{equation*}
  The lemma follows from this by partial summation and Lemma \ref{lem:P}, using the observation that
  \begin{align*}
    \frac{(a_0+a_1+a_2)}{3}+A+\frac{\deg\Delta}{3} & = A + a_0 + a_1+a_2+e>2,
  \end{align*}
  since $A+a_2>2$ and $a_0 + a_1 + e = \deg f_{0,1} \geq 0$
  (cf.~\eqref{eqn:degree_matrix}).
\end{proof}

Thus, replacing Lemma \ref{lem:y_height_bound} and Lemma \ref{lem:upper_bound_conv} in the proof of Theorem \ref{thm:main} by Lemma \ref{lem:s_t_height_bound} and Lemma \ref{lem:upper_bound_2_conv}, respectively, we see that the conclusions of Theorem \ref{thm:main} remain valid in case $n=1$  under the weaker assumptions of Theorem \ref{thm:conic_bundle_eqn}. This concludes our proof of Theorem \ref{thm:conic_bundle_eqn}. \qed

\subsection{Proof of Theorem \ref{thm:conic_bundle}}
Let $X$ be as in Theorem \ref{thm:conic_bundle}. As every vector bundle on $\PP^1$ is a direct sum of line bundles \cite[Ex.~V.2.6]{Har77},
we may choose equations for $X$ inside some $\FF_1(a_0,a_1,a_2)$
with $0 \leq a_0 \leq a_1 \leq a_2$ as a smooth hypersurface of bidegree $(e,2)$, for some $e \geq 0$.
We are thus in the setting of Theorem \ref{thm:conic_bundle_eqn}.
On noting that $8 - K_X^2 = 2(a_0 + a_1 + a_2) + 3e$ \cite[Prop.~2.5]{FLS16}, we therefore have
$$\frac{8 - K_X^2}{3} = \frac{2(a_0 + a_1 + a_2)}{3} + e \geq a_0 +  a_1 + e.$$
Thus $\alpha > (8 - K_X^2)/3$ implies that $\alpha > a_0 + a_1 + e$, so 
applying Theorem \ref{thm:conic_bundle_eqn}, with $E$ the union of the singular fibres and the hypersurface $x_2 = 0$,
gives the result. \qed

\subsection{Proof of Theorem \ref{thm:dP}}
As above we embed $X$ inside $\FF_1(a_0,a_1,a_2)$,
for some $0 \leq a_0 \leq a_1 \leq a_2$, as a smooth hypersurface of bidegree $(e,2)$.
For $d \geq 6$ we may apply Theorem \ref{thm:conic_bundle}. For $5 \leq d \leq 3$ and
$ d < 3$ it is shown in \cite[Thm.~5.6]{FLS16} that the invariants may be chosen to satisfy
$a_0 + a_1 +e = 1$ and $a_0 + a_1 +e = 2$, respectively. Thus we may apply Theorem \ref{thm:conic_bundle_eqn} in these cases.

It remains to show that $S: x_2 = 0$ is not an accumulating subvariety. 
Let $C$ be an irreducible component of $S$. As $S$ is a multisection of $\pi$,
we see that $C$ is also a multisection of $\pi$. It follows that
$C\cdot F\geq 1$.
Moreover, as $-K_X$ is ample,
we have $C \cdot (-K_X) \geq 1$. We deduce that
$$C \cdot (-K_X + \alpha F) \geq 1 + \alpha > 2.$$
Standard results for counting rational points on curves (see e.g.~\cite[\S 9.7]{Ser97})  show that
$C$ contains 
$O_C(B^{2/(1 + \alpha)})$
rational points of bounded height, hence $C$ does not effect the main term of the 
asymptotic formula, as claimed. \qed

%As explained on mathoverflow, $\widetilde{X}$ naturally lives inside $\FF_n(0,0,1)$ as a hypersurface
%of bidegree $(1,2)$. Namely, it has the form
%\begin{equation} \label{eqn:conic_bundle}
%	\sum_{0 \leq i,j \leq 2} f_{i,j}(y_0,y_1,y_2)x_ix_j= 0
%\end{equation}
%where the degrees of the $f_{i,j}$ are given by the following matrix
%\begin{equation} \label{eqn:degree_matrix}
%\left( \begin{array}{ccc}
%2a_0 + e & a_0 + a_1 + e & a_0 + a_2 + e \\
%a_0 + a_1 + e & 2a_1 + e & a_1 + a_2 + e \\
%a_0 + a_2 + e & a_1 + a_2 + e & 2a_2 + e \end{array} \right).
%\end{equation}

%For a cubic threefold this is
%\begin{equation} \label{eqn:degree_matrix}
%\left( \begin{array}{ccc}
%1 & 1 & 2 \\
%1 & 1 & 2 \\
%2 & 2 & 3 \end{array} \right).
%\end{equation}
%This is the same degrees that one obtains for a cubic surface.

%The height function is given by 
%$$\prod_v \max\{|y_0|_v,|y_1|_v,|y_2|_v\}^{n+1}\max\{|x_0|_v, |x_1|_v, |y_0x_2|_v,|y_1x_2|_v,|y_2x_2|_v\}.$$

%We take $\alpha>2$. Then the height bound $H(y;x) \leq B$ implies that the $y_i$ are very small.
%Applying Lemma \ref{lem:conics_upper_bound}, we find the upper bound
%$$N(C_{y}, B)/B  \ll \max\{|y_i|\}^\varepsilon 
%	\left(\frac{1}{\max\{|y_i|\}^{n + 1/3} |\Delta(y)|^{1/3}} + \frac{1}{H(y)^{n+1}} \right).$$
%We use the same dominated convergence theorem trick. 
%As $\Delta(y) \neq 0$ by assumption, everything convergences fine as  $\alpha > 2$. \qed

\subsection{Proof of Theorem \ref{thm:bi_projective}}
This follows immediately from Theorem \ref{thm:HDP}, on noting that 
the contribution from the rational points in $x_2 = 0$ is negligible. (The coordinate $x_2$
is not special in this case as $a_0 = a_1 = a_2 = 0$). \qed
 
\subsection{Proof of Theorem \ref{thm:cubic}}
Let $X \subset \PP^{n+2}$ be a smooth cubic hypersurface over $k$ with a line $L \subset X$.
Let $P \to \PP^{n+2}$ be the blow-up of $\PP^{n+2}$ in $L$; this is isomorphic to the $\PP^2$-bundle
over $\PP^n$ given by $\FF_n(0,0,1)$ \cite[Prop.~9.11]{EH16}.
Moreover, the strict transform of $X$ inside $P$ is exactly
the blow-up $\widetilde{X}$  of $X$ in $L$. We consider the induced map $\pi: \widetilde{X} \to \PP^{n}$.

We claim that $\widetilde{X}$ has bidegree $(1,2)$
in $\FF_n(0,0,1)$. To verify this, we may assume that $L : z_2=\dots=z_{n+2} = 0$,
where the $z_i$ are coordinates on $\PP^{n+2}$. The blow-up map is then given by
$$\FF_n(0,0,1) \to \PP^{n+2}, \quad (y;x) \mapsto (x_0:x_1:y_0x_2:\dots:y_nx_2).$$
Using this, one easily sees that the strict transform of $X$ has the claimed bidegree.

Using Theorem \ref{thm:HDP}, it suffices to show that the contribution from the 
hypersurface $x_2 = 0$ is negligible in this case. 
(Note that this is the exceptional divisor of the blow-up). 
Let $H^*$ be the height from Lemma \ref{lem:standard_height}. Then 
\begin{align*}
	\#\{ (y;x) \in \widetilde{X}(k): H^*(y;x) \leq B, x_2 = 0 \} 
	\leq 2 \#\{ y \in \PP^n(k) : H(y)^{n + 1 + \alpha - 2} \leq B\} = o(B), 
\end{align*}
since $\alpha > 2$ by assumption. 
The result therefore follows from Theorem \ref{thm:HDP}. \qed

\section{Compatibility with conjectures} \label{sec:conjectures}

We now explain the compatibility of our results with the Batyrev--Manin conjecture \cite{BM90}
and Batyrev--Tschinkel's conjecture \cite{BT_Tamagawa} for the leading constant.

\subsection{Batyrev--Manin}

Let $X$ be a smooth projective rationally connected variety over a field $k$ of characteristic $0$
and $D$ a big $\QQ$-divisor on $X$. Let $\Lambda_{\mathrm{eff}}(X)$ be the pseudo-effective cone of $X$, i.e.~the closure of the cone of effective divisors on $X$.
We recall that the constants $a(D)$ and $b(D)$ from \eqref{conj:BM} are conjecturally given by
$$a(D) = \inf\{a \in \RR : aD + K_X \in \Lambda_{\mathrm{eff}}(X)\}$$
and $b(D)$ is the codimension of the minimal face of $\Lambda_{\mathrm{eff}}(X)$ which contains
the adjoint divisor $a(D)D + K_X$.

\begin{lemma} \label{lem:a_b}
	Let $\pi: X \to \PP^n$ be a proper morphism over a field $k$ of characteristic $0$ whose generic fibre is isomorphic to a plane conic,
	with $X$ non-singular.
	Let $F$ be the pull back of the hyperplane class
	and let $\alpha \in \QQ_{> 0}$ be such that $D= -K_X + \alpha F$ is big. Then
	$$a(D) = 1, \quad b(D) = 1.$$
\end{lemma}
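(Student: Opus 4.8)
The plan is to compute the relevant part of the pseudo-effective cone $\Lambda_{\mathrm{eff}}(X)$ directly, using the conic bundle structure. First I would observe that since the generic fibre of $\pi$ is a plane conic, we have $(-K_X)\cdot F = 2$ when we restrict to a general fibre, and more importantly the adjoint divisor $a(D)D + K_X = a(-K_X + \alpha F) + K_X = (a-1)(-K_X) + a\alpha F$ becomes effective precisely when it restricts to something effective on the generic fibre together with being effective "horizontally". Restricting $(a-1)(-K_X) + a\alpha F$ to a general fibre $C \cong \PP^1$ (on which $F$ restricts trivially and $-K_X$ restricts to the ample anticanonical class of degree $2$), we see the restriction is $2(a-1)$ times the point class, which lies in the effective cone of $C$ iff $a \geq 1$. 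This shows $a(D) \geq 1$. For the reverse inequality, at $a = 1$ the adjoint divisor is simply $\alpha F$, which is the pullback of an ample (hence effective) class on $\PP^n$, so $D + K_X = \alpha F \in \Lambda_{\mathrm{eff}}(X)$, giving $a(D) \leq 1$; hence $a(D) = 1$.

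Next, for $b(D)$ I need the codimension of the minimal face of $\Lambda_{\mathrm{eff}}(X)$ containing $\alpha F$. Since $\alpha > 0$, the minimal face containing $\alpha F$ is the same as the minimal face containing $F$, so I would determine which face of the pseudo-effective cone $F$ lies on. The class $F$ generates an extremal ray: it is the class of a fibre of $\pi$, it satisfies $F^2 = 0$ (actually $F \cdot F \cdot (\text{anything pulled back from } \PP^n \text{ of positive dimension})$ — more carefully, $F$ is nef and $F^{n+1} = 0$ since $F$ is a pullback from $\PP^n$), and more to the point $F$ is a non-zero effective class that is a limit of... no — actually $F$ itself is effective and nef. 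The key point is that $F$ is semiample (pullback of $\OO_{\PP^n}(1)$) but not big (its self-intersection of top degree vanishes), which forces $F$ to lie on the boundary of $\Lambda_{\mathrm{eff}}(X)$, and in fact I claim $\RR_{\geq 0} F$ is an extremal ray: any effective decomposition $F = D_1 + D_2$ with $D_i$ effective must have both $D_i$ supported on fibres of $\pi$ (since $F \cdot (\text{general fibre moving in the pencil}) = 0$ forces components to be vertical, and since the fibration over $\PP^n$ through a general point has irreducible fibre equal to the whole $\PP^1$-worth... ), so $D_i$ are rational multiples of $F$. This gives that the minimal face containing $F$ is $\RR_{\geq 0}F$ itself, which has codimension equal to $\rho(X) - 1$ if that were a face of dimension $1$ — but wait, I want $b(D) = 1$, so I need this face to have codimension $1$, i.e. the minimal face containing $F$ should be a \emph{facet}, not an extremal ray.

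Let me reconsider: I would instead argue that the minimal face $G$ of $\Lambda_{\mathrm{eff}}(X)$ containing $F$ has codimension $1$. Since $F$ is nef, it lies in the closure of the ample cone, hence on the boundary of $\Lambda_{\mathrm{eff}}(X)$ (as $F$ is not big), so $\mathrm{codim}\, G \geq 1$. For the reverse, I would exhibit a supporting hyperplane that cuts out a face of dimension exactly $\dim \Lambda_{\mathrm{eff}}(X) - 1$ containing $F$: the functional given by intersection with $F^n$ (a curve class, the class of a line in a fibre of $\PP(\mathscr{E}) \to \PP^n$ pushed into $X$, or rather a multisection-type curve) — more robustly, I would use that the Mori cone of curves $\overline{NE}(X)$ has the ray spanned by a line in a fibre of $\pi$ as an extremal ray (true for conic bundles: fibral curves give extremal rays), and the dual face to this extremal curve ray is a facet of $\Lambda_{\mathrm{eff}}(X)$ on which $F$ lies (since $F \cdot (\text{fibral line}) = 0$ but $F$ is nef and nonzero). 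This facet contains $F$, so the minimal face containing $F$ is contained in a facet, giving $\mathrm{codim}\, G \leq 1$ hence $b(D) = 1$. \emph{The main obstacle} I anticipate is making the facet argument fully rigorous for arbitrary (possibly high-dimensional, possibly with bad fibres) conic bundles $\pi: X \to \PP^n$ — I expect one must invoke the structure of $\overline{NE}(X)$ for conic bundles (extremality of the fibre class) together with the fact that the base is $\PP^n$ with $\rho(\PP^n) = 1$ so that $F$ together with the "$-K_X$ direction" already spans the relevant part of $N^1(X)$ modulo the fibral contraction — this is presumably where the acknowledgement to Sho Tanimoto comes in, and where one should cite cone-theorem/relative-Mori-theory results carefully rather than attempting an ad hoc intersection-theory computation.
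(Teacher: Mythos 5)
Your argument that $a(D) = 1$ is fine, and takes a slightly different (and arguably more direct) route than the paper. For $a(D) \leq 1$ you use, as the paper does, that $D + K_X = \alpha F$ is effective. For $a(D) \geq 1$ you intersect the adjoint class $(a-1)(-K_X) + a\alpha F$ with a general fibre $C$; since $C$ moves in a covering family, any pseudo-effective class dots nonnegatively with it, and $F \cdot C = 0$, $(-K_X)\cdot C = 2$ give $2(a-1) \geq 0$. (The phrasing ``restricts to something effective on $C$'' should be replaced by ``has nonnegative intersection with the movable curve class $C$''; a pseudo-effective divisor need not restrict to an effective one, but the intersection number is what you actually use.) The paper instead argues that if $(1-\varepsilon)D + K_X$ were pseudo-effective then $\alpha F = \varepsilon D + \bigl((1-\varepsilon)D + K_X\bigr)$ would be big, contradicting that $\pi$ is not birational. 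Both are valid.

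For $b(D)$ there is a genuine gap, and in fact a sign error that exposes it. You exhibit a supporting hyperplane $C^{\perp}$ (dual to the fibral curve class) cutting out a facet $H$ of $\Lambda_{\mathrm{eff}}(X)$ with $F \in H$, and then write ``the minimal face $G$ containing $F$ is contained in a facet, giving $\mathrm{codim}\,G \leq 1$.'' But $G \subseteq H$ forces $\mathrm{codim}\,G \geq \mathrm{codim}\,H = 1$, which you already knew from $F$ not being big. Exhibiting \emph{some} facet through $F$ does not rule out that $F$ sits on a smaller face (e.g.\ an extremal ray) of that facet; to conclude $b(D) = 1$ you would have to show $F$ lies in the \emph{relative interior} of $H$, i.e.\ that $G = H$. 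This is precisely the hard part and is not addressed. The paper sidesteps the cone-geometry entirely: it invokes \cite[Prop.~18]{HTT15}, which (using $\Pic X = \NS X$ for the rationally connected $X$) gives $b(D) = \rank \Pic X - \rank \Pic_{\pi} X$ where $\Pic_{\pi}X$ is the group generated by $\pi$-vertical divisors, and then produces the exact sequence $0 \to \Pic_{\pi}X \to \Pic X \to \Pic X_{\eta} \to 0$ together with $\rank \Pic X_{\eta} = 1$ (the generic fibre is a conic). That reduces the face computation to a clean rank count. If you want to salvage your Mori-theoretic route, you would need to argue that the face of $\Lambda_{\mathrm{eff}}(X)$ dual to the fibral curve class is exactly the cone spanned by $\pi$-vertical effective divisors and that $F$ (being the pullback of an ample class, hence $\pi$-relatively big) lies in its relative interior — which, once written out, is essentially a cone-theoretic reformulation of the same exact sequence.
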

      
\begin{proof}	
	As $D + K_X = \alpha F \in \Lambda_{\mathrm{eff}}(X)$, we clearly have $a(D) \leq 1$.
	So let  $\varepsilon > 0$ and assume that the $\RR$-divisor
	$$ P:=(1 - \varepsilon)D + K_X$$%= \alpha(1 - \varepsilon)F + \varepsilon K_X $$
	is pseudo-effective. Then we have
 	$$\alpha F = D + K_X = \varepsilon D + P.$$
	As the sum of a big divisor and a pseudo-effective divisor is big,
	this implies that $\alpha F$ is big (this follows from the fact
	that the big cone is the interior of the pseudo-effective cone).
	However $\alpha F$ is clearly not big, as the map $\pi$ is not birational;
	contradiction.
%	Alternative proof: let $\varepsilon > 0$
%	and assume that the divisor
%	$$ A:=(1 - \varepsilon)D + K_X = t(1 - \varepsilon)F + \varepsilon K_X $$
%	is effective. Let $f$ be a smooth fibre over some rational point.
%	Note that $F \cdot f = 0$. Moreover, the relative anticanonical bundle is given by 
%	$\omega_\pi^{-1} = \omega_X^{-1} \otimes \OO_X((-n-1)F)$. As $\omega_\pi^{-1}$ induces the anticanonical bundle
%	on $f$, we find that $f \cdot (-K_X) = 2$. Thus  $A \cdot f = -2\varepsilon < 0$.
%	However, as $f$ is a movable curve and $A$ is effective, we have $A \cdot f \geq 0$,
%	which is a contradiction.
	
	The adjoint divisor is thus $a(D)D + K_X = \alpha F$.	
	To calculate $b(D)$ we use \cite[Prop.~18]{HTT15}. As $X$
	is rationally connected \cite[Cor.~1.3]{GHS03} we have $\Pic X = \NS X$. Thus it follows from \cite[Prop.~18]{HTT15} that
	\begin{equation} \label{eqn:b(D)}
		b(D) = \rank \Pic X - \rank \Pic_\pi X,
	\end{equation}
	where $\Pic_\pi X \subset \Pic X$ is the sublattice of $\pi$-vertical divisors, i.e.~classes of divisors
	$E \subset X$ such that $\pi(E) \neq \PP^n$. We claim that there is an exact sequence 
	\begin{equation} \label{seq:exact}
		0 \to \Pic_\pi X \to \Pic X \to \Pic X_\eta \to 0,
	\end{equation}
	where $X_\eta$ denotes the generic fibre of $\pi$.
	Exactness on the left is clear, whereas exactness on the right follows from simply taking the closure
	in $X$ of any divisor on $X_\eta$. For exactness in the middle, let $E$ be a divisor whose
	restriction to the generic fibre is principal, i.e.~there is a rational function $f$
	on $X_\eta$ such that $E|_{X_\eta} = \div_{X_\eta} f$. 
	But $f$ is equally well a rational function on $X$,
	hence we have $E - \div_{X} f  \in \Pic_\pi X$. 
	It follows that $[E] \in \Pic_\pi X$, which shows that
	\eqref{seq:exact} is indeed exact.
	
	As $X_\eta$ is just a conic, we have $\rank \Pic X_\eta = 1$. Therefore \eqref{eqn:b(D)}
	and the exactness of \eqref{seq:exact} imply that $b(D) = 1$, as required.	
%	$\mathbf{\dim X = 2}$: 
%	Recall that 
%	\begin{equation} \label{eqn:rho}
%		\rho(X) =  2 + \#\{ \text{closed points }x \in \PP^1 : \pi^{-1}(x) \text{ is singular and  split} \}
%	\end{equation}
%	(see \cite[Lem.~2.1]{FLS16}).
%	However the divisor $\alpha F$ lies in the face of $\Lambda_{\mathrm{Eff}}(X)$ containing the divisors
%	$$\{F\} \cup \{ D : D \subset \pi^{-1}(x) \text{ is singular and  split}, \, x \in \PP^1\}.$$
%%	By \eqref{eqn:rho} this face has codimension $1$, thus again $b(D) = 1$.	
%	Alternative idea: Use the fact that $b(D)$ is a birational invariant. Hence without loss of generality
%	we may reduce to the case of standard conic bundles, where the proof is clear.	
\end{proof}

Lemma \ref{lem:a_b} shows that the asymptotic formulae we obtain in this paper agree with the conjecture \eqref{conj:BM}.

\subsection{Batyrev--Tschinkel}
The leading constant in Theorem \ref{thm:main} is equal to the sum of the Peyre constants of those smooth fibres of $\pi: X \to \PP^n$ which meet $U$.

This is in agreement
with the conjectural constant proposed by Batyrev and Tschinkel in \cite[\S 3.5]{BT_Tamagawa}. Namely,
we are in the situation of Case $1$ of \cite[\S 3.5]{BT_Tamagawa}, and, as explained there, the leading
constant should be given as the sum of the leading constants of each of the smooth fibres (in the terminology of 
\cite{BT_Tamagawa}, our variety $X$ is not ``strongly $\mathcal{L}$-saturated'' and not ``$\mathcal{L}$-primitive'', but the map $\pi$ is an ``$\mathcal{L}$-primitive
fibration'').

\smallskip
However, in \cite[Conj.~3.5.1]{BT_Tamagawa} is stated a related conjecture, which turns out to \emph{fail} to hold
in our case. This conjecture is fairly general; we make it explicit in the case of conic bundles considered in this paper.

\begin{conjecture}[Batyrev-Tschinkel] \label{conj:BT}
	Let $\pi: X \to \PP^n$ be a conic bundle over a number field $k$ and 
	let $F$ be the pull back of the hyperplane class.
	Let $\alpha \in \QQ_{> 0}$ be such that $D= -K_X + \alpha F$ is big and choose an adelic
	metric on $\OO_X(D)$. Let $H$ be the usual $\OO(1)$-height 	on $\PP^n$. 
	Then there exists $c_2 > c_1 > 0$ and $U \subset \PP^n$ dense open such that 
	for all $y \in U(k)$ we have
	$$\frac{c_1}{H(y)^{n + 1 + \alpha}} \leq \tau(X_y) \leq \frac{c_2}{H(y)^{n + 1 + \alpha}}.$$
	Here $\tau(X_y)$ denote the Tamagawa measure of the fibre $X_y$ with respect to the
	adelic metric induced by $\OO_X(D)$.
\end{conjecture}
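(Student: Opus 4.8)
The plan is to \emph{disprove} Conjecture~\ref{conj:BT}, by exhibiting an explicit counterexample among the conic bundles treated in this paper (in the spirit of the examples of Derenthal and Gagliardi \cite{DG16}). I would take $n=1$ and $k=\QQ$. To sidestep the uninteresting failure of the lower bound — fibres without adelic points have $\tau(X_y)=0$ — I would choose a conic bundle surface all of whose fibres carry a rational point and whose discriminant has small degree, for instance $X:\ x_0^2-x_1^2=f(s,t)x_2^2$ in $\FF_1(0,0,1)$ with $f$ a squarefree binary quadratic form; here the section $(1:1:0)$ forces $X_y\cong\PP^1_\QQ$ for every $y$ with $\Delta(y)=f(y)\neq 0$, and the small degree of $\Delta$ will let us control the factorisation of $\Delta(y)$. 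The crucial point is that, although every fibre is abstractly $\PP^1$, the adelic metric on $-K_{X_y}$ induced by $\OO_X(D)$ — equivalently, the reduction type of the integral conic model of $X_y$ — varies arithmetically with $y$. Using Peyre's recipe and the explicit local metrics of Lemma~\ref{lem:standard_height} I would write $\tau(X_y)=\prod_v\tau_v(X_y)$ with the usual convergence factors $(1-p^{-1})$ at the finite places, noting that for a primitive integral representative $\vy$ one has $\max_i|y_i|_p=1$, so the only $y$-dependence at a prime $p$ enters through the reduction of $X_y$ modulo $p$.

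The first main step is to isolate the fluctuating part of $\tau(X_y)$. The archimedean factor contributes $\asymp H(y)^{-(n+1+\alpha)}$ by a direct estimate with the weights $A,a_j$ of Lemma~\ref{lem:standard_height}; the factors at $p\nmid\Delta(y)$ equal $1-p^{-2}$ and multiply to an absolute positive constant; and each $p\mid\Delta(y)$ contributes a correction factor $\beta_p(y)$. Since $X_y$ is everywhere locally isotropic and $\Delta(y)$ squarefree (arranged below), every such $p$ has $v_p(\Delta(y))=1$ and the special fibre of the conic model is a union of two $\FF_p$-rational lines — splitness being automatic for the diagonal shape $x_0^2-x_1^2-(\cdot)x_2^2$. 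A local density computation then shows that $\beta_p(y)$ is bounded below by an absolute constant $\gamma>1$ (in fact $\beta_p(y)\to 2$ as $p\to\infty$). Hence, for such $y$, $\tau(X_y)\gg H(y)^{-(n+1+\alpha)}\gamma^{\omega(\Delta(y))}$, where $\omega$ denotes the number of distinct prime divisors.

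The second main step is arithmetic: I would construct a Zariski-dense sequence $y_i\in\PP^1(\QQ)$ with $H(y_i)\to\infty$, $\Delta(y_i)$ squarefree and $\omega(\Delta(y_i))\to\infty$. For a positive density of primes $p$ the quadratic $\Delta(s,1)$ has a root modulo $p$ (by Chebotarev, i.e.\ the primes with a degree-one factor in the splitting field of $\Delta$), and squarefree values of the degree-$2$ polynomial $\Delta(s,1)$ have positive density; intersecting these conditions, for every $K$ there are infinitely many $y$ with $\Delta(y)$ squarefree and divisible by at least $K$ distinct primes. Diagonalising over $K$ yields the sequence, which is automatically Zariski-dense in $\PP^1$ since it is infinite. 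For these $y_i$ one gets $\tau(X_{y_i})\,H(y_i)^{n+1+\alpha}\gg\gamma^{K_i}\to\infty$, so no constant $c_2$ as in Conjecture~\ref{conj:BT} can exist; as any dense open $U\subset\PP^1$ omits only finitely many of the $y_i$, the conjecture fails. I would conclude with the remark that this is entirely compatible with Theorem~\ref{thm:main}: the $y$ with $\omega(\Delta(y))$ large form a sparse set, so the \emph{sum} $\sum_y\tau(X_y)$ still converges even though there is no uniform pointwise bound.

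The hardest part will be the local density computation at the bad primes: one has to evaluate $\tau_p(X_y)$ for a conic whose integral model has a split node, precisely enough to see that the resulting correction factor exceeds a fixed $\gamma>1$ uniformly in $p$ and $y$, and to handle the exceptional prime $p=2$. A secondary, more routine point is to verify that the archimedean and good-reduction contributions genuinely reconstitute the normalisation $H(y)^{-(n+1+\alpha)}$ predicted by Batyrev and Tschinkel, so that the failure of the conjecture is caused by the bad-prime product rather than by a mismatch in the predicted exponent.
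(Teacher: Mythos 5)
Your approach—disproving the conjecture by exhibiting unbounded growth of $\tau(X_y)\,H(y)^{n+1+\alpha}$, driven by a factor $\gamma^{\omega(\Delta(y))}$ coming from split bad reduction—is precisely the mechanism the paper uses, so the core idea is the same. The paper's example is simpler, though: it takes $X\colon x_0^2+x_1^2-stx_2^2\subset\FF_1(0,0,1)$, for which $\Delta(1,t)=t$, so one may directly choose $t$ squarefree with all prime divisors $\equiv 1\bmod 4$ (giving a rational point on the fibre by Fermat and split reduction), and the local density $\sigma_p = 2(1-p^{-1}) \geq (4/3)(1-p^{-2})$ is just quoted from \cite[Prop.~3.6]{FLS16}. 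Your variant $x_0^2-x_1^2=f(s,t)x_2^2$ with the section $(1:1:0)$ is a legitimate alternative and has the merit of guaranteeing a rational point on \emph{every} smooth fibre, so that the failure is visibly not an artifact of empty fibres; but you pay for it with the Chebotarev-plus-squarefree-sieve step to manufacture $y$ with $\omega(f(y))$ large, which the paper sidesteps entirely since $\Delta(1,t)=t$ can be prescribed. The one substantive thing your plan omits—deliberately, it seems—is that the paper also shows the \emph{lower} bound fails for the same $X$, by noting $\tau(X_t)=0$ when $t$ is a prime $\equiv 3\bmod 4$; since the paper observes that upper-bound counterexamples were already in Derenthal--Gagliardi \cite{DG16}, it is the lower-bound failure that is genuinely new there, so your counterexample, while correct, reproves the less novel half. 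Your local density estimate $\gamma>1$ at odd split primes, the separate treatment of $p=2$, the archimedean normalisation recovering the exponent $n+1+\alpha$, and your closing remark that the failure is only of order $H(y)^\varepsilon$ (so the sum of Peyre constants in Theorem~\ref{thm:main} still converges) all match the paper's treatment.
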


We illustrate the failure of \emph{both} inequalities in this conjecture for the 
hypersurface $X$ over $\QQ$ defined by
\begin{equation} \label{eqn:X}
	Q(s,t,x_0,x_1,x_2) = x_0^2 + x_1^2 - stx_2^2 \quad \subset \FF_1(0,0,1),
\end{equation}
with respect to the $\OO_X(D)$-height $H^*$ from Lemma \ref{lem:standard_height}.

The method we present can be generalised without too much difficultly to the more general set up of Conjecture \ref{conj:BT}.
Counter-examples to the upper bound in a different setting have been found by Derenthal and Gagliardi \cite{DG16}, however counter-examples
to the lower bound appear here for the first time.

We consider the fibres $X_{t}$ above points $(1:t)\in\PP^1(\QQ)$, which are isomorphic to the plane conics $C_t$ defined by $x_0^2+x_1^2=tx_2^2$. First note that if $t$ is prime and $t \equiv 3 \bmod 4$ then the lower bound of the conjecture clearly fails: the 
corresponding conic has no rational point, so the Tamagawa measure of the fibre is $0$ but the lower bound in Conjecture \ref{conj:BT} is positive.
As for the upper bound, we have the following.

\begin{lemma}
	Let $t$ be a positive squarefree integer whose prime divisors are all $1 \bmod 4$.
	Then
	$$\tau(X_t) = \frac{\pi}{t^{2+\alpha}} \prod_{p \mid t} 2\left(1 - \frac{1}{p}\right) \prod_{p \nmid 2t} \left(1 - \frac{1}{p^2}\right).$$
In particular, for such $t$ we have 
	$$\tau(X_t) \geq \frac{\pi}{\zeta(2)}\frac{(4/3)^{\omega(t)}}{t^{2+\alpha}},$$
	where $\omega(t)$ denotes the number of prime factors of $t$.
\end{lemma}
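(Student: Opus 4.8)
The plan is to compute the Tamagawa measure $\tau(X_t)$ of the fibre $X_t \cong C_t: x_0^2 + x_1^2 = t x_2^2$ explicitly as an Euler product of local densities, using the adelic metric induced by $\OO_X(D)$ with the standard height $H^*$ from Lemma~\ref{lem:standard_height}. First I would recall that $X_t$ is a smooth plane conic with $X_t(\QQ) \neq \emptyset$ (since every prime divisor of $t$ is $1 \bmod 4$, the form $x_0^2 + x_1^2 - t x_2^2$ is isotropic over $\QQ$ by the Hasse principle for conics), so $\tau(X_t)$ is a genuine positive number. The restriction of $\OO_X(D)$ to $X_t$ is the anticanonical bundle, and the local metrics are the ones read off from the formula for $H^*_v$ in \eqref{def:Height}: at the place $v$, specialising $y = (1:t)$, the local height on the fibre is $\max\{1,|t|_v\}^A \max\{|x_0|_v, |x_1|_v, |t|_v^{a_2}|x_2|_v\}$ with $A = 2 + \alpha - 1 = 1 + \alpha$ (here $n=1$, $a_0 = a_1 = 0$, $a_2 = 1$, $e = 0$), so that the global scaling factor contributes $\max\{1, |t|_v\}^{A \cdot m_v} = |t|_v^{-(1+\alpha)}$ in total over all places (for $t$ a positive integer), giving the $t^{-(2+\alpha)}$ prefactor since $2 + \alpha = A + 1$.

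Next I would split $\tau(X_t) = \tau_\infty \cdot \prod_p \tau_p$ into the archimedean and non-archimedean local densities for the conic $x_0^2 + x_1^2 = t x_2^2$ with the metric above. The archimedean factor is a real period integral over $C_t(\RR)$; parametrising the conic and carrying out the integral (the conic $x_0^2 + x_1^2 = tx_2^2$ is a circle after scaling) yields $\tau_\infty = \pi$ (up to the explicit normalisation of the Tamagawa measure — this is a routine computation of the volume of $\PP^1(\RR)$ in the induced measure, as $C_t \cong \PP^1$). For the finite places, the local density $\tau_p$ counts $\FF_p$-points of the reduction weighted appropriately; for $p \nmid 2t$ the conic has good reduction and $\tau_p = \#C_t(\FF_p)/p^2 \cdot (\text{metric factor}) = (1 + 1/p)/p \cdot \dots$, which after the standard manipulation combines with the convergence factor to give $1 - 1/p^2$. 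For $p \mid t$ the conic is singular mod $p$ (two lines, $x_0^2 + x_1^2 = (x_0 + i x_1)(x_0 - i x_1)$ since $-1$ is a square mod $p$), and a direct count of the points lifting to $\ZZ_p$-points on the conic, together with the non-trivial contribution of the metric from the $|t|_p^{a_2}|x_2|_p$ term, yields the factor $2(1 - 1/p)$. The prime $p = 2$ is handled separately (it is excluded from the last product); one checks its contribution is absorbed correctly.

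Once the formula $\tau(X_t) = \dfrac{\pi}{t^{2+\alpha}} \prod_{p\mid t} 2(1 - 1/p) \prod_{p \nmid 2t}(1 - 1/p^2)$ is established, the stated lower bound is a short estimate: write $\prod_{p \nmid 2t}(1 - 1/p^2) = \dfrac{1}{\zeta(2)} \cdot \dfrac{1}{1 - 1/4} \cdot \prod_{p \mid t}(1 - 1/p^2)^{-1} = \dfrac{4}{3\zeta(2)} \prod_{p\mid t}(1 - 1/p^2)^{-1}$, so that
\[
  \tau(X_t) = \frac{4\pi}{3\zeta(2)\,t^{2+\alpha}} \prod_{p \mid t} \frac{2(1 - 1/p)}{1 - 1/p^2} = \frac{4\pi}{3\zeta(2)\,t^{2+\alpha}} \prod_{p\mid t} \frac{2}{1 + 1/p}.
\]
Since every $p \mid t$ satisfies $p \geq 5$, each local factor $2/(1 + 1/p) \geq 2/(1 + 1/5) = 5/3 > 4/3$; more simply, using the weaker bound $2/(1+1/p) \ge 2/(1 + 1/3) = 3/2$ one still gets each factor $> 4/3$, so $\prod_{p\mid t} 2/(1+1/p) \geq (4/3)^{\omega(t)}$ with room to spare, and combining with the $\tfrac{4}{3} \ge 1$ out front gives $\tau(X_t) \geq \dfrac{\pi}{\zeta(2)} \dfrac{(4/3)^{\omega(t)}}{t^{2+\alpha}}$, which diverges from any bound of the form $c_2/t^{2+\alpha}$ as $\omega(t) \to \infty$, contradicting the upper bound in Conjecture~\ref{conj:BT}.

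The main obstacle is the careful bookkeeping in the local density computation at the bad primes $p \mid t$ and at $p = 2$: one must correctly account for both the geometry of the singular reduction (a union of two conjugate lines) and the contribution of the non-trivial metric coming from the $a_2 = 1$ twist in $\FF_1(0,0,1)$, since it is precisely this interaction — the conic being degenerate exactly at primes dividing $t$ while still possessing many $p$-adic points — that produces the factor $2(1 - 1/p)$ instead of the good-reduction $1 - 1/p^2$, and hence the exponential-in-$\omega(t)$ growth that breaks the conjecture. Verifying that the Tamagawa measure is indeed finite and that the infinite product converges (which it does, since $2(1-1/p)/(1 - 1/p^2) = 2/(1+1/p) = 2 - 2/(p+1)$ and the logarithm of the full product telescopes against $\log\zeta(2)$ up to a convergent correction) is then routine.
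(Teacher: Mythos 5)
Your overall strategy matches the paper's: decompose $\tau(X_t)$ as an Euler product of local densities, read the local metrics off the explicit standard height $H^*$ at $y=(1:t)$, identify $\sigma_p = 1-p^{-2}$ for good odd $p$, the doubled factor $2(1-p^{-1})$ at $p \mid t$ from the split reduction, $\sigma_2 = 1$, and then the same manipulation with $\zeta(2)$ for the lower bound. But there is a concrete accounting error in your treatment of the $t$-power prefactor, and it matters because it pretends the archimedean density is $t$-independent. You write that the scaling factor $\prod_v \max\{1,|t|_v\}^{A m_v}$ "gives the $t^{-(2+\alpha)}$ prefactor since $2+\alpha = A+1$," but that product equals $t^A = t^{1+\alpha}$, not $t^{2+\alpha}$ (only $v=\infty$ contributes, and the identity you wrote, $\max\{1,|t|_v\}^{Am_v}=|t|_v^{-(1+\alpha)}$, is not correct as a local statement either). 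Also, the inner factor should be $\max\{1,|t|_v\}^{a_j}|x_j|_v$, not $|t|_v^{a_j}|x_j|_v$; at a finite place this is $1$, not $|t|_p$.

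The missing power of $t$ lives in the archimedean density itself, and your claim $\tau_\infty = \pi$ is not correct as stated. On the chart $x_2=1$ the conic becomes $x_0^2+x_1^2 = t$, so $|x_0|,|x_1|\le\sqrt t < t$ and hence $\max\{|x_0|,|x_1|,t|x_2|\} = t$ identically on that chart. Thus $H_\infty(\vx) = t^{1+\alpha}\cdot t = t^{2+\alpha}$ in the integrand, which combined with the Leray form $dx_0/(2\sqrt{t-x_0^2})$ gives $\sigma_\infty = \pi/t^{2+\alpha}$. The $t^{2+\alpha}$ is not a free-floating prefactor you can peel off in advance; it is produced inside the archimedean integral because the metric twist $a_2=1$ forces the max to lock onto the $t|x_2|$ term on the relevant chart. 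If you try to assert $\tau_\infty = \pi$ and get the $t$-power from "global scaling" alone, you come up short by a factor of $t$. The remaining pieces of your sketch (the $\FF_p$-point counts at good and bad primes, the $p=2$ Hensel check yielding $\sigma_2 = 1$, and the rearrangement $\tau(X_t) = \frac{4\pi}{3\zeta(2)t^{2+\alpha}}\prod_{p\mid t}\frac{2}{1+1/p}$ with each factor $\ge 4/3$) are correct in outline and match the paper's, but the archimedean step needs to be fixed before the displayed formula is actually established.
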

\begin{proof}
  The conic $X_t$ has a rational point in this case, by Fermat's theorem. The height on $X$ pulls back along the isomorphism $C_t\cong X_t$ to the height $H=\prod_{v}H_v$, where
  \begin{align*}
    H_\infty(\vx) &= t^{\alpha+1}\max\{|x_0|,|x_1|,|tx_2|\}, \text{ and}\\
    H_p(\vx) &= \max\{|x_0|,|x_1|,|x_2|\}\text{ for prime }p.
  \end{align*}
The Tamagawa number has the form
\begin{equation*}
  \tau(X_t) = \tau(C_t) = \sigma_\infty\prod_p\sigma_p,
\end{equation*}
with $\sigma_\infty,\sigma_p$ the local densities. We compute $\sigma_\infty$ using \cite[Lem.~5.4.4]{peyre}. We may restrict ourselves to the open subsets $U_\pm:=\{(x_0:x_1:1)\mid \pm x_1>0\}\subseteq C_t(\RR)$, since the complement of their union is finite. With the obvious charts $\rho_\pm : U_\pm \to (-\sqrt{t},\sqrt{t})$, $(x_0:x_1:1)\mapsto x_0$, we get $\sigma_\infty = \sigma_+ + \sigma_-$, where
\begin{equation*}
  \sigma_{\pm}=\int_{-\sqrt{t}}^{\sqrt{t}}\frac{\mathrm d x}{H_\infty(\rho_\pm^{-1}(x))\abs{(\partial Q / \partial x_1)(1,t,\rho_\pm^{-1}(x))}} =\int_{-\sqrt{t}}^{\sqrt{t}}\frac{\mathrm d x}{t^{2+\alpha}2\sqrt{t-x^2}} = \frac{\pi}{2t^{2+\alpha}}.
\end{equation*}
Thus, $\sigma_\infty=\pi/t^{2+\alpha}$. For the $p$-adic densities, we let
\begin{equation*}
  N(p^n) := \card\{\vx\bmod p^n\mid \vx\not\equiv\vz\bmod p,\ x_0^2+x_1^2\equiv tx_2^2\bmod p^n\}.
\end{equation*}
Then it is well-known that (cf.~\cite[Cor.~3.5]{PT01})
\begin{equation*}
  \sigma_p = \lim_{n\to\infty}\frac{N(p^n)}{p^{2n}}.
\end{equation*}
  Let $p$ be an odd prime. If $p\nmid t$ then it is well-known that $\sigma_p = (1-p^{-2})$.
  If $p \mid t$, the proof of \cite[Prop.~3.6]{FLS16} shows that
  \begin{equation*}
    \sigma_p= 2(1-p^{-1}) \geq (4/3)(1 - p^{-2}).
  \end{equation*}
  For $p=2$, an application of Hensel's lemma shows that $N(2^n) = 2^{2n-6}N(8)$ for all $n\geq 3$, so $\sigma_2=N(8)/64$. One can verify by direct calculations that $N(8)=64$ in both possible cases $t\equiv 1,5\bmod 8$. Thus, $\sigma_2=1$.
\end{proof}

This shows the failure of the upper bound in Conjecture \ref{conj:BT}.
However  the problem just lies with the non-archimedean densities, and in our case we have 
$$\tau(X_y) \ll_\varepsilon \frac{H(y)^{\varepsilon}}{H(y)^{2 + \alpha}} $$
for all $\varepsilon > 0$. So the upper bound in Conjecture \ref{conj:BT} does not fail ``too badly''. 
It is for this reason that the sum of Peyre constants in Theorem  \ref{thm:main} is  still convergent.

\bibliographystyle{amsalpha}
\bibliography{fibration}
\end{document}